\documentclass[11pt,a4paper]{article}

\usepackage{epsf,epsfig,amsfonts,amsgen,amsmath,amstext,amsbsy,amsopn,amsthm}
\usepackage{amsmath,times,mathptmx}
\usepackage{amsfonts,amsthm,amssymb}
\usepackage{graphicx}
\usepackage{latexsym,bm}
\usepackage{indentfirst}
\usepackage{color}
\usepackage[colorlinks=true,anchorcolor=blue,filecolor=blue,linkcolor=blue,urlcolor=blue,citecolor=blue]{hyperref}
\usepackage{float}
\usepackage{tikz}
\usepackage{verbatim}
\usepackage{mathrsfs}

\evensidemargin=\oddsidemargin
\evensidemargin=\oddsidemargin\topmargin=-1.5cm

\newtheorem{theorem}{Theorem}[section]

\newtheorem{lemma}{Lemma}[section]

\newtheorem{proposition}{Proposition}[section]

\newtheorem{conj}{Conjecture}[section]
\newtheorem{claim}{Claim}[section]

\addtocounter{section}{0}
\newcommand{\eps}{\varepsilon}

\usepackage{indentfirst}

\begin{document}

\title{\bf \LARGE Dense-core approach to the Brualdi--Hoffman--Tur\'{a}n problem on odd wheels\footnote{Supported by the National
Natural Science Foundation of China (Nos.\,12501471,\,12571369).}}

\author{{\bf Longfei Fang$^{a}$},~
{\bf Mingqing Zhai$^b$}\thanks{Corresponding author: mqzhai@njust.edu.cn (M. Zhai).},~~{\bf Yuhan Zhang$^{b}$} \\[2mm]
\small $^{a}$ School of Mathematics and Finance, Chuzhou University, Chuzhou, Anhui 239012, China\\
\small $^{b}$ School of Mathematics and Statistics, Nanjing University of Science and
Technology, \\
\small Nanjing, Jiangsu 210094, China
}

\date{}
\maketitle

\begin{abstract}
We present a unified presentation of the fixed-size adjacency-spectral extremal problem for
odd wheels $W_{2k+1}$, where $k\geq2$ and $W_{2k+1}=K_1\vee C_{2k}$.
The exceptional case $W_5$ and the general case
$W_{2k+1}$, $k\ge3$, share the same dense-core reduction and edge-spectral
stability, but have different
rigidity structures.  We prove that every
$W_5$-free graph of sufficiently large size $m$ satisfies
$\rho(G)^2-\rho(G)\le m,$
with equality precisely for $K_{n,n}$ with a perfect matching embedded
in each part, where $n$ is even and $m=n^2+n$.  For any fixed
$k\ge3$,  every $W_{2k+1}$-free graph of sufficiently large size $m$
satisfies
$\rho(G)^2-(k-1)\rho(G)\le m-\binom{k}{2},$
with equality precisely for $K_k\vee qK_1$ when
$m=\binom{k}{2}+kq$.
Our results completely settle a conjecture proposed by Yu, Li and Peng and, via a distinct approach, 
further strengthen known results concerning odd cycles, friendship graphs and odd fan graphs for sufficiently large $m.$
The proof combines the edge-spectral stability theorem, residual functions and the dense-core method.
\end{abstract}

\begin{flushleft}
\noindent\textbf{Keywords:} spectral radius; odd wheel; Brualdi--Hoffman--Tur\'an problem;
stability; dense-core method; residual function.

\textbf{AMS subject classifications:} 05C50; 05C35
\end{flushleft}

\section{Introduction}

All graphs are finite, simple and undirected.  We write $e(G)$ for the
number of edges of $G$, $A(G)$ for its adjacency matrix and $\rho(G)$
for the spectral radius of $A(G)$.
The subgraph of $G$ induced by the vertex subset $S\subseteq V(G)$ is denoted by $G[S]$.
Isolated vertices do not affect
either $e(G)$ or $\rho(G)$.
Therefore, unless stated otherwise, all graphs considered throughout this paper are assumed to have all isolated vertices removed.

The problem of characterizing graphs of size $m$ with maximum spectral radius was first raised by Brualdi and Hoffman \cite{Brualdi}. 
Research on this problem subject to a Tur\'{a}n-type constraint can be traced back to 1970, 
when Nosal \cite{Nosal} proved that \(\rho(G)\leq\sqrt{m}\) for every triangle-free graph $G$ of size $m.$ 
Nikiforov \cite{Nikiforov} subsequently established that \(\rho(G)\leq\sqrt{(1-1/r)2m}\) holds for all \(K_{r+1}\)-free graphs $G.$ 
In recent years, the Brualdi--Hoffman--Tur\'{a}n--type problem has attracted considerable attention, 
and numerous important results have deepened our understanding of this topic 
(see, for example, \cite{Joyentanuj,Lishuchao,Lixin,LiLiuZhangStability,Liu,Zhailinshu,Zhangyanting}).

A \emph{$k$-wheel} is the graph obtained from joining every vertex of a $(k-1)$-cycle with
an additional central vertex.
Since $W_{2k+1}=K_1\vee C_{2k},$
a graph $G$ is $W_{2k+1}$-free if and only if $G[N_G(v)]$ is $C_{2k}$-free for
every $v\in V(G)$.
A \emph{$k$-fan} is defined as $V_k=K_1\vee P_{k-1}$.

For every fixed integer $k\ge1$ and every sufficiently large integer $m$, we define
\begin{equation}\label{eq:gdef}
g_k(m):=\frac{k-1+\sqrt{4m-k^2+1}}2.
\end{equation}

Yu, Li, and Peng \cite{YuLiPeng} proposed three conjectures concerning the Brualdi–Hoffman–Tur\'{a}n problem
for fan graphs and wheel graphs separately.

\begin{conj}\label{conj1}
For fixed $k\geq2$ and sufficiently large $m$, if $G$ is a
$V_{2k+1}$-free or
$V_{2k+2}$-free graph with $m$ edges, then
$\rho(G)\le g_k(m).$
Equality holds if and only if $G\cong K_k\vee (\frac mk-\frac{k-1}2)K_1$.
\end{conj}

\begin{conj}\label{conj2}
For fixed $k\geq2$ and sufficiently large $m$, if $G$ is a
$W_{2k+1}$-free graph with $m$ edges, then
$\rho(G)\le g_k(m).$
Equality holds if and only if $G\cong K_k\vee (\frac mk-\frac{k-1}2)K_1$.
\end{conj}

\begin{conj}\label{conj3}
For fixed $k\geq2$ and sufficiently large $m$, if $G$ is a
$W_{2k+2}$-free graph with $m$ edges, then
$\rho(G)\le \sqrt{4m/3}.$
Equality holds if and only if $G$ is a regular complete 3-partite graph.
\end{conj}

Conjecture \ref{conj1} was confirmed by Li, Zhao and Zou \cite{Lishuchao}. 
As a consequence, they also resolved the corresponding problem for friendship graphs.
Conjecture \ref{conj3} was settled affirmatively by Li, Liu and Zhang \cite{LiLiuZhangStability1}.
Specifically, they established that for any color-critical graph $F$ satisfying $\chi(F)=r+1\geq4$,
if $m$ is sufficiently large and $G$ is an $F$-free graph with $m$ edges,
then $\rho(G)\leq\sqrt{(1-1/r)2m}$, with equality if and only if $G$ is a regular $r$-partite Tur\'{a}n graph.
This extends the well-known result of Nikiforov for $K_{r+1}$-free graphs.
To date, Conjecture \ref{conj2} remains open.

In prior work, Li, Liu, and Zhai \cite{LiLiuZhaiWheel} characterized the spectral extremal graphs
for a fixed number of edges $m$ under the complete prohibition of wheel graphs.
They proved that the corresponding extremal graphs are book graphs, whose spectral radius equals
$\frac12(1+\sqrt{4m-3})$.
Nevertheless, forbidding a single odd wheel $W_{2k+1}$ constitutes a considerably weaker constraint.
The main challenge lies in the fact that its decomposed graph family contains even cycles,
which leads to a nonlinear number of edges within the neighborhood of each vertex.

For every integer $m\geq1$, we define
\begin{equation}\label{eq:gasymp}
 g^*(m):=\frac{1+\sqrt{4m+1}}2.
\end{equation}
Then $g^*(m)^2-g^*(m)=m$.
The main results of this paper are presented as follows.

\begin{theorem}\label{thm1}
There exists $m_0$ such that every $W_5$-free graph $G$ with
$m\geq m_0$ edges satisfies
$\rho(G)\le g^*(m)$.
Equality holds if and only if
$ G\cong K_{n,n}+M_S+M_T,$
where $n$ is an even number such that $n^2+n=m$, and
$M_S$ and $M_T$ are perfect matchings embedded in the two
parts of $K_{n,n}$.
\end{theorem}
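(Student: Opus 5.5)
Let $G$ be a $W_5$-free graph with $m$ edges, $\rho=\rho(G)$, and $x$ the Perron vector, normalized so that $\max_v x_v = x_u = 1$. The plan is to argue by contradiction: assume $\rho \ge g^*(m)$, so $\rho^2-\rho\ge m$, and force equality together with the stated structure.

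\textbf{Starting identity.} Since $W_5=K_1\vee C_4$, the $W_5$-free condition says exactly that $G[N(v)]$ is $C_4$-free for every $v$. Write $A=N(u)$, $B=V(G)\setminus(A\cup\{u\})$. The basic identity is
$$\rho^2 x_u=\sum_{w\in N(u)}\sum_{z\in N(w)}x_z=d(u)x_u+\sum_{w\in A}d_A(w)x_w+\sum_{z\in B}d_A(z)x_z .$$
Subtracting $\rho x_u=\sum_{w\in A}x_w$ gives
$$\rho^2-\rho=d(u)+\sum_{w\in A}(d_A(w)-1)x_w+\sum_{z\in B}d_A(z)x_z .$$

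\textbf{Reduction to a dense core.} Using $x_w\le1$, the right-hand side is at most $d(u)+2e(A)-|A|+e(A,B)$. We compare this with $m=d(u)+e(A)+e(A,B)+e(B)$. This gives a residual inequality: we need
$$e(A)-|A|\le e(B)+\sum_{w}(1-x_w)(\cdots),$$
with a deficit term that absorbs the losses. Because $G[A]$ is $C_4$-free, $e(A)=O(|A|^{3/2})$, which is not linear. This is where I would invoke the edge-spectral stability theorem and the dense-core method. Vertices of large Perron weight must form a core $L$ of bounded size's complement, in which every $x_v\ge 1-\eps$. Moreover $G$ must be close to $K_{n,n}$ with $n\approx\sqrt m$: the $K_k\vee qK_1$ shape with $k=2$ is beaten, since $K_2\vee qK_1$ contains no $C_4$ in neighbourhoods but has $\rho^2-\rho\approx m - 1$.

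The stability argument runs as follows. First show $\rho\ge\sqrt m$ forces $G$ to be $O(\eps m)$-close to a complete bipartite graph $K_{a,b}$ with $ab\approx m$. Otherwise odd structures would give small $\rho$, by Nikiforov-type bounds for graphs far from bipartite using few triangles: each edge lies in triangles whose apexes form a $C_4$-free neighbourhood structure. Then clean up, showing $a=b=n$ and that every vertex of small weight can be deleted with gain.

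\textbf{Handling the core and the main obstacle.} With $G$ near $K_{n,n}$ with parts $S,T$, the edges inside $S$ and inside $T$ are the only source of the extra $+\rho$. For $v\in S$, $N(v)\supseteq T$ essentially. Any two edges $t_1t_2,t_3t_4$ inside $T$, together with $v$ adjacent to all four endpoints, give $C_4=t_1t_2t_3t_4$? Not quite: we need $t_2t_3$ and $t_4t_1$ too. So the real constraint is that $G[T]$ is $C_4$-free and, more sharply, that $G[N(v)]$ mixes $S$ and $T$ edges. An edge $s_1s_2$ in $S$ plus a common $T$-neighbour $t$ gives $N(t)\supseteq\{s_1,s_2,\dots\}$. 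A path $s_1s_2$, $s_2s_3$ in $S$ gives $C_4$-like structures $s_1s_2s_3t'$ inside $N(t)$ for another $t'$ adjacent to $t$. This forces $G[S]$ and $G[T]$ to have maximum degree $\le1$ once both contain edges with common neighbours. Hence each is a matching, and perfect matchings maximize $\rho$, giving $\rho=n+1$ and $m=n^2+n$ with $n$ even.

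The main obstacle is the stability step, namely proving closeness to $K_{n,n}$ rather than to $K_2\vee qK_1$ or other candidates. For this I would first try the residual function $\rho^2-\rho-m$ evaluated on the cleaned core. Once this is done, the residual computation should show that any deviation, such as missing bipartite edges or an imperfect matching, strictly decreases $\rho^2-\rho$ below $m$, which yields the equality case.
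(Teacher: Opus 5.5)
You have picked the same tools as the paper (edge-spectral stability, the dense core, the residual function). However, the proposal defers exactly the two steps that carry the proof, so as written it does not establish the theorem.

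\textbf{First gap: the unbalanced regime.} Stability only gives $d(H,K_{A,B})=o(m)$ with $|A||B|\approx m$. Nothing forces $|A|\approx|B|$, and graphs such as $K_2\vee qK_1$ (one edge away from $K_{2,q}$) live in this regime. Your line ``clean up, showing $a=b=n$'' is unsupported. You flag this yourself as the main obstacle and only say you would ``try the residual function''; the single example $\rho^2-\rho=m-1$ for $K_2\vee qK_1$ is not an argument. The paper needs a separate argument when $|B|\ge100|A|$:
\begin{itemize}
\item Perron localization on the core rests on the core estimate $2hx_u^2\ge(1-2\eps_0)d_H(u)$. It places the maximum vertex $u^*$ in $A$, almost complete to $B$, with $x_v\le x_{u^*}/4$ on $B$. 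It also shows every $A$-vertex of weight $>\tfrac35x_{u^*}$ has more than $\tfrac{11}{20}|B|$ cross-neighbours.
\item $W_5$-freeness gives $|N_B(u)\cap N_B(u')\cap N(u^*)|\le1$ for distinct $u,u'\in N_A(u^*)$. Hence $d_A(u^*)\le2$, with at most one heavy neighbour $u_0$.
\item In the residual identity, all positive surplus then comes from edges at $u_0$. The eigen-equation at $u_0$ bounds it by $(\rho-1)x_{u^*}$. The $-\rho x_{u^*}$ term overwhelms this, giving $\rho^2-\rho-h\le-1$.
\end{itemize}

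\textbf{Second gap: the balanced endgame.} ``Perfect matchings maximize $\rho$'' is not the required statement. Once each side induces a graph of maximum degree at most one, the graph may still miss cross edges and have unequal sides. You must prove $\rho^2-\rho\le e(G)$ for every such graph, with equality only for $K_{n,n}+M_S+M_T$.

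The paper does this with two-step eigen-equations at the maximum coordinates $x_S^*,x_T^*$ of each side. This gives $(\rho^2-\rho-e(S,T))\,x_S^*\le 2e(T)\,x_T^*$ and its mirror inequality. Multiplying them and using $2\sqrt{e(S)e(T)}\le e(S)+e(T)$ gives the bound, and the equality analysis forces regularity.

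Your $\Delta\le1$ argument is also incomplete. Centring the wheel at $t\in T$ needs an edge $tt'$ inside $T$, so it says nothing when one side is independent. The correct argument centres at the vertex $s_2$ of internal degree two. It takes two common cross-neighbours $t_1,t_2$ of $s_1,s_2,s_3$, giving the $4$-cycle $s_1t_1s_3t_2$ in $N(s_2)$.

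This requires \emph{every} vertex, not just most, to have cross-degree above $(1-\eta)$ times the opposite side. That again comes from the core's Perron lower bound, which your sketch never derives. Finally, you need the core's threshold property to transfer the equality case back to $G$: $\rho(H)\ge g^*(e(H))$, with equality only if $H=G$.
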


\begin{theorem}\label{thm2}
Let $k\ge3$ be fixed.
There exists $m_0=m_0(k)$ such that every
$W_{2k+1}$-free graph $G$ with $m\ge m_0$ edges satisfies
$\rho(G)\le g_k(m)$.  Equality holds if and only if
$G\cong K_k\vee qK_1$,
where $q$ is a positive integer such that ${k \choose 2}+kq=m$.
\end{theorem}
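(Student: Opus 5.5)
We argue by contradiction. Let $G$ be $W_{2k+1}$-free with $m$ edges and $\rho:=\rho(G)\ge g_k(m)$, so that $\rho^2-(k-1)\rho\ge m-\binom{k}{2}$. Let $x$ be the Perron vector, normalized so that $x_{u^*}=\max_v x_v=1$. The plan has four steps: bound $\rho^2$ at $u^*$, use stability to locate a dense core, show the core is a $k$-clique joined to almost everything, and finish with a rigidity argument.

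\textbf{Step 1 (local identity at $u^*$).} From $A^2x=\rho^2x$ at $u^*$ we get
\[
\rho^2=\sum_{v\in N(u^*)}\sum_{w\in N(v)}x_w
= d(u^*)+\sum_{w\ne u^*} d_{N(u^*)}(w)\,x_w .
\]
Subtracting $(k-1)\rho=(k-1)\sum_{v\in N(u^*)}x_v$ gives an identity of the form
\[
m-\tbinom{k}{2}\le \rho^2-(k-1)\rho
= e(G)-e\bigl(G-N[u^*]\bigr)+R,
\]
where $R$ is a residual function. It collects the terms $2e(N(u^*))$ weighted by $x$, the edges between $N(u^*)$ and the far set, and the penalty $-(k-1)\sum_{N(u^*)}x_v$. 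Since $G[N(u^*)]$ is $C_{2k}$-free, Bondy--Simonovits gives $e(N(u^*))=O(d^{1+1/k})$. This is where the nonlinearity enters. The weighted version must be handled through the $x$-values and cannot be closed by counting alone.

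\textbf{Step 2 (stability and dense core).} Apply the edge-spectral stability theorem: since $\rho\ge\sqrt m$ roughly, $G$ has $O(1)$ vertices carrying a positive fraction of the $\ell_1$-mass of $x$. Let $L=\{v: x_v\ge \eps\}$. Standard estimates ($\rho x_v\le d(v)$ and $\sum_v d(v)=2m$) give $|L|=O(1/\eps)$ and $d(v)\ge(\eps-o(1))\sqrt m$ for every $v\in L$. The dense core should then be a set $K\subseteq L$ whose vertices have degree $(1-o(1))\sqrt m$ and $x$-value close to $1$, such that almost all edges meet $K$. Two facts drive this. First, $W_{2k+1}$-freeness forbids any $k+1$ core vertices from sharing a large common neighborhood: $k+1$ vertices of $L$ with $\Omega(\sqrt m)$ common neighbors, together with the edges among them, would contain $K_1\vee C_{2k}$ via a hub vertex when $k\ge3$. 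Second, $\rho^2\approx m$ forces $\sum_{v\in K}x_v\cdot(\text{common degree})\approx \rho^2$. Together these should yield $|K|=k$, with $K$ a clique whose vertices are joined to almost all of $V\setminus K$.

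\textbf{Step 3 (exactness).} Once the structure $K_k\vee$ (almost independent set) is established, I would show that $V\setminus K$ spans no edges and that every vertex outside $K$ is adjacent to all of $K$. An edge $uv$ outside $K$ with $u,v$ both having at least two neighbors in $K$ creates a $C_{2k}$ in the neighborhood of a core vertex, using the other $k-1$ core vertices and many common neighbors. Otherwise, deleting such an edge and re-adding it as an edge to $K$ strictly increases $\rho$, by the Rayleigh quotient with $x_K\approx1$ against $x_u,x_v=O(1/\sqrt m)$. This contradicts maximality, so we may assume $G$ is extremal. The residual $R$ then shows that any deficiency strictly lowers $\rho^2-(k-1)\rho$ below $m-\binom k2$. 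Finally, $K_k\vee qK_1$ satisfies $\rho^2-(k-1)\rho=kq$ exactly, and it is $W_{2k+1}$-free because each neighborhood has at most $k-1$ other vertices of degree at least $3$ in that neighborhood, so no $C_{2k}$ fits there. When $m$ is not of the form $\binom k2+kq$, the inequality is strict.

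\textbf{Main obstacle.} The hardest part is Step 2: showing the core has exactly $k$ vertices, not $k-1$ or $k+1$. It is also the place where $W_5$ ($k=2$) breaks the pattern, since a core of size $2$ competes with the $K_{n,n}$ construction. I would first bound the weighted $x$-sum over $K$ of the edges inside $N(u^*)$, comparing $\binom{k}{2}$ against the Bondy--Simonovits term, and then show that $|K|\le k-1$ gives $\rho^2-(k-1)\rho\le m-\Omega(\sqrt m)$.
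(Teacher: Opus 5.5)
\textbf{There is a genuine gap, and it is in Step 2.} The stability lemma only says that $G$ is $o(m)$-close to some $K_{A,B}$. It gives no control over the ratio $b/a$. When $a,b=\Theta(\sqrt m)$, the Perron vector is spread out and no bounded set carries its mass.

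Your bound $|L|=O(1/\eps)$ does not follow either. From $x_v\ge\eps$ and $\rho x_v\le d(v)$ you only get $d(v)\ge\eps\rho\approx\eps\sqrt m$, and then $\sum_v d(v)=2m$ gives only $|L|\le 2\sqrt m/\eps$.

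This balanced regime is a real threat, not a technicality. Take $K_{n,n}$ with a $(k-1)$-regular graph inside each part. Then $\rho=n+k-1$ and $m=n^2+(k-1)n$, so $\rho^2-(k-1)\rho=m>m-\binom{k}{2}$, which beats the bound. It must be excluded using $W_{2k+1}$-freeness, and this is exactly where $k=2$ and $k\ge3$ diverge: for $k=2$ this graph is the extremal one. So the $W_5$ difference is not about the size of a split core, as your last paragraph suggests.

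The paper handles this regime as Case 2 ($b<100a$):
\begin{itemize}
\item internal degrees are at most $k-1$;
\item the Rayleigh quotient, combined with the balanced Perron caps of Lemma~\ref{cor:bounded-ratio-perron-cap}, gives $\rho\le\sqrt h+\Gamma(r,\alpha,\beta)+O_k(\eta^2)+o(1)$;
\item comparing with $g_k(h)=\sqrt h+\frac{k-1}{2}+o(1)$ forces $\alpha,\beta>k-1-\frac14$;
\item a vertex $u_0$ with $k-1$ neighbours in $A$, together with two disjoint edges in $X=\bigcap_i N_B(u_i)$, then yields a $C_{2k}$ inside $N(u_0)$.
\end{itemize}
Your proposal has no counterpart of this. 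It also lacks the pointwise lower bounds $x_u^2\ge(1-2\eps_0)d(u)/(2m)$ from the $\eps_0$-core reduction (Lemma~\ref{lem:core}). The paper needs these to localize the Perron vector at all (Lemmas~\ref{lem:perron-mass} and~\ref{lem:R-empty}).

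\textbf{Your rigidity claims are also wrong as stated, even in the split regime.}
\begin{itemize}
\item ``$k+1$ core vertices sharing a large common neighbourhood'' does not produce $W_{2k+1}$: $K_{k+1,q}$ is bipartite and hence $W_{2k+1}$-free. What you need is a hub adjacent to $k$ vertices whose consecutive common neighbourhoods, inside the hub's neighbourhood, are large. This is the paper's Claim~\ref{claim6.1}, with hub $u^*$ and $L=\{u\in N_A(u^*):x_u>\frac{3}{5}x_{u^*}\}$.
\item In Step 3, a single edge outside $K$ does not create a $C_{2k}$. $K_k\vee qK_1$ plus one edge in the independent side is $W_{2k+1}$-free: the neighbourhood of a clique vertex induces $K_{k-1}\vee(qK_1+e)$, whose longest cycle has $2k-1$ vertices.
\item Your switching step never checks that the modified graph stays $W_{2k+1}$-free.
\item Bondy--Simonovits counting is far too coarse for the $O(1)$ precision needed against $\binom{k}{2}$.
\end{itemize}

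The paper avoids all of this through the exact residual identity \eqref{eq:40A}:
\begin{itemize}
\item $|L|\le k-1$ by the $C_{2k}$ construction;
\item $|L|\le k-2$ would give $P\le-\rho+\binom{k}{2}<0$;
\item a missing edge inside $L$ is paid for by comparing the eigen-equations at $u^*$ and $v\in L$ (Claim~\ref{claim6.2G});
\item every remaining term is then nonpositive, which forces $P=0$, $W=\varnothing$ and $H=G\cong K_k\vee qK_1$.
\end{itemize}
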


By Theorems \ref{thm1} and \ref{thm2}, Conjecture \ref{conj2} is completely resolved.
The case $k=2$ is fundamentally different from the general case $k\geq3$.
The proof uses the edge-spectral stability theorem of Li--Liu--Zhang
\cite{LiLiuZhangStability}, the dense-core method of Fang--Lin--Zhai
\cite{FangLinZhai}, and the residual function of Zhai--Li--Lou
\cite{ZhaiLiLou}.

\section{Preliminaries}

We start with two notations that will be employed to formulate a crucial edge-spectral stability result due to Li, Liu, and Zhang \cite{LiLiuZhangStability}.
Let $G$ and $H$ be two graphs (which may have distinct vertex sets).
Define the \emph{distance} between $G$ and $H$ as
$$d(G, H):=|E(G)\setminus E(H)|+|E(H)\setminus E(G)|,$$
which counts the minimum number of edge modifications (additions or deletions) needed to transform $H$ into $G$.
Given two disjoint vertex sets $U$ and $V$,
we use $K_{U,V}$ to represent the complete bipartite graph with parts $U$ and $V$.

\begin{lemma}[Li, Liu, and Zhang \cite{LiLiuZhangStability}]\label{thm:stability}
Let $F$ be fixed with $\chi(F)=3$.
For every $\varepsilon >0$, there exists a constant $\delta=\delta(F,\varepsilon)>0$ and a positive integer $m_0$ such that
the following holds.
If $G$ is an $F$-free graph with $m\geq m_0$ edges satisfying
$\rho(G)\geq\sqrt{(1-\delta)m},$
then there exist two disjoint sets $U,V\subseteq V(G)$ such that
$d(G,K_{U,V})\leq \eps m.$
Moreover, $\rho(G)\le\sqrt{(1+o(1))m}$ for every $m$-edge $F$-free graph $G$.
\end{lemma}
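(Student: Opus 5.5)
The natural route is to reduce to a dense "core" on $O(\sqrt m)$ vertices, where classical dense-graph tools (supersaturation, the removal lemma, Erdős--Simonovits stability) apply, and then to transfer the conclusion back to $G$. Throughout, fix $t=|V(F)|$. Since $\chi(F)=3$, $F\subseteq K_3(t)$, the complete $3$-partite graph with parts of size $t$, so every $F$-free graph is $K_3(t)$-free. Let $x$ be the unit Perron vector of $A(G)$ and $\rho=\rho(G)$.

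\emph{Step 1 (the bound $\rho\le\sqrt{(1+o(1))m}$).} I would use Nikiforov's triangle inequality $3\,t(G)\ge \rho(\rho^2-m)$, where $t(G)$ is the number of triangles.
\begin{itemize}
\item Suppose $\rho^2\ge(1+\eta)m$. Then $t(G)\ge c\,m^{3/2}$ with $c=c(\eta)>0$.
\item Fix a small $\gamma>0$ and let $L$ be the set of vertices of degree below $\gamma\sqrt m$. Triangles meeting $L$ number at most $\sum_{v\in L}\binom{d(v)}{2}\le \gamma\sqrt m\cdot m$. For $\gamma\ll c$, at least $\tfrac c2 m^{3/2}$ triangles therefore lie inside $H:=G[V\setminus L]$.
\item The core is small: $|V(H)|\le 2m/(\gamma\sqrt m)=2\sqrt m/\gamma$. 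So $H$ has $N=O(\sqrt m)$ vertices and $\Omega(N^3)$ triangles.
\item By Erdős' supersaturation theorem, $H$ then contains $K_3(t)\supseteq F$, a contradiction.
\end{itemize}
This gives the "moreover" part.

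\emph{Step 2 (transferring the spectral hypothesis to the core).} Now assume $\rho\ge\sqrt{(1-\delta)m}$. I would show that almost all of the spectral weight, and almost all edges, live on the high-degree set $V\setminus L$.
\begin{itemize}
\item From $\rho x_v=\sum_{u\sim v}x_u$ and Cauchy--Schwarz, $\rho^2x_v^2\le d(v)$. Hence $x_v\le \sqrt{\gamma}\,m^{-1/4}(1+o(1))$ on $L$.
\item Bounding the contribution $2\sum_{uv\in E,\,u\in L}x_ux_v$ to the Rayleigh quotient, I expect to get $\rho(H)^2\ge(1-\delta-O(\sqrt\gamma))m$.
\item Combined with $\rho(H)^2\le(1+o(1))e(H)$ from Step 1 applied to $H$, this yields $e(H)\ge(1-\delta')m$. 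Thus $G$ differs from $H$ by at most $\delta' m$ edges.
\end{itemize}
This step is the main obstacle: the Perron entries on $L$ are individually small, but $L$ can be huge, so a crude bound fails. My first attempt is to split the edges touching $L$ by whether the other endpoint is in $L$. Edges inside $L$ contribute at most $2\sum_{uv\subseteq L}x_ux_v\le\rho(G[L])$. The Step 1 argument, redone on $G[L]$ with its own degree threshold, should give $\rho(G[L])\le(1-\Omega(1))\sqrt m$ unless $G[L]$ is itself dense-like, which its low degrees forbid. Edges from $L$ to $V\setminus L$ are handled by Cauchy--Schwarz against $e(L,V\setminus L)$.

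\emph{Step 3 (stability on the dense core).} The graph $H$ has $N=O(\sqrt m)$ vertices, is $K_3(t)$-free, and satisfies $\rho(H)^2\ge(1-\delta')e(H)$.
\begin{itemize}
\item By supersaturation, $H$ has $o(N^3)=o(m^{3/2})$ triangles. The triangle removal lemma then deletes $o(m)$ edges to obtain a triangle-free graph $H'$ with $\rho(H')^2\ge(1-\delta'')e(H')$.
\item I then need a quantitative Nosal theorem: a triangle-free graph with $\rho^2\ge(1-\delta'')e$ is $\varepsilon e$-close to some $K_{U,V}$. For this I would use the Perron vector $y$ of $H'$. Triangle-freeness gives $\rho^2 y_v=\sum_{w}|N(v)\cap N(w)|\,y_w$ with disjoint neighbourhood structure, so near-equality in $\rho^2\le e$ forces most mass to sit on pairs $(v,w)$ with nearly equal, nearly complementary neighbourhoods. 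Taking $U=N(v_0)$ and $V=N(u_0)$ for an adjacent pair $u_0v_0$ of maximal weight should then give $d(H',K_{U,V})\le\varepsilon' m$.
\end{itemize}
Summing the three edit distances ($G\to H$, $H\to H'$, $H'\to K_{U,V}$) and choosing $\gamma,\delta$ small in terms of $\varepsilon$ gives $d(G,K_{U,V})\le\varepsilon m$.
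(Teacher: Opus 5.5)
\paragraph{Verdict: there is a genuine gap in Step~2.}
The paper does not prove this lemma; it imports it from Li--Liu--Zhang. So your argument has to stand on its own.

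Step~1 is sound. Nikiforov's bound $3t(G)\ge\rho(\rho^2-m)$, the count of at most $\gamma m^{3/2}$ triangles through vertices of degree below $\gamma\sqrt m$, and Erd\H{o}s' theorem applied to the at most $2\sqrt m/\gamma$ remaining vertices together give $\rho(G)\le\sqrt{(1+o(1))m}$.

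\paragraph{Why Step~2 fails.}
The conclusion of Step~2 is false. Take $G=K_{1,m}$, or $K_{s,m/s}$ with $s<\gamma\sqrt m$, or (for $F=W_{2k+1}$) the extremal graph $K_k\vee qK_1$ of Theorem~\ref{thm2}. Each has $\rho(G)^2\ge m$. But every vertex on the large side has degree below $\gamma\sqrt m$, so the whole large side lies in $L$. Then $H=G[V\setminus L]$ has no edges (respectively only $\binom k2$ edges), so neither $\rho(H)^2\ge(1-\delta-O(\sqrt\gamma))m$ nor $e(H)\ge(1-\delta')m$ holds.

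The edges you planned to control by Cauchy--Schwarz against $e(L,V\setminus L)$ are exactly the ones carrying the spectrum. In the star, the centre has Perron entry $1/\sqrt2$, half the Perron mass sits on $L$, and every term of the Rayleigh quotient comes from an $L$--$(V\setminus L)$ edge. Only the edges inside $L$ are negligible, and trivially so: $\rho(G[L])\le\Delta(G[L])<\gamma\sqrt m$.

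The lemma must certify precisely such unbalanced configurations; the paper uses it in the regime $|A|=O_k(1)$, $|B|=\Theta(m)$ of Lemma~\ref{lem:split-perron-hierarchy}. Your Step~3 only ever looks at the dense core, so it cannot recover $(U,V)$ there. The chain $G\to H\to H'\to K_{U,V}$ breaks at its first link.

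\paragraph{A way to repair it.}
Avoid deleting the low-degree side at all. Your Step~1 reasoning actually shows $t(G)=o(m^{3/2})$ for every $F$-free $G$.
\begin{itemize}
\item Combine this with $\sum_i\lambda_i^3=6t(G)$, $\sum_i\lambda_i^2=2m$ and $\rho^2\ge(1-\delta)m$. Since $\rho^3-6t(G)\le\sum_{\lambda_i<0}|\lambda_i|^3\le|\lambda_{\min}|(2m-\rho^2)$, this gives $\lambda_{\min}^2\ge(1-O(\delta)-o(1))m$.
\item Let $y$ be a $\lambda_{\min}$-eigenvector, $S=\{y>0\}$ and $T=\{y<0\}$. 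Then $|\lambda_{\min}|\le\rho(G[S,T])\le\sqrt{e(S,T)}$.
\item Hence $G$ is $(O(\delta)+o(1))m$-close to the bipartite graph $G[S,T]$. Its biadjacency matrix $B$ satisfies $\sigma_1(B)^2\ge(1-O(\delta)-o(1))\|B\|_F^2$.
\item A $0/1$ matrix this close to rank one is close to a single all-ones block. To see this, pick a row whose residual against the rank-one approximation is small relative to its weight. That row shows the top right singular vector is nearly $\mathbf 1_V/\sqrt{|V|}$, and a column does the same on the left.
\end{itemize}
This last step needs no density assumption, so it yields $K_{U,V}$ uniformly in the balanced and unbalanced regimes. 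It would also replace the heuristic ``quantitative Nosal'' step in your Step~3.
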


In what follows, we introduce another powerful tool for tackling the problem under consideration,
which was established by Fang, Lin and Zhai (see \cite{FangLinZhai1, FangLinZhai}).
For a graph $G$ with $e(G)\geq1$, define
$\Phi(G):=\rho(G)\big/\sqrt{e(G)}.$
Fix $0<\eps<1/100$.
Following Fang--Lin--Zhai \cite{FangLinZhai}, an edge-induced subgraph $G'\subsetneq G$ is called
\emph{$\eps$-dense} in $G$ if both the following hold:
\vspace{1mm}

(i) $(1-\eps)e(G)<e(G')<e(G)$;

(ii) $\Phi(G')-\Phi(G)\ge \eps(e(G)-e(G'))\big/\big(2e(G)\big).$
\vspace{1mm}

\noindent An \emph{\(\varepsilon\)-core} is defined as a graph
containing no proper \(\varepsilon\)-dense subgraphs (and having no isolated vertices).
The following lemma provides fundamental properties of an $\eps$-core.

\begin{lemma}[Fang, Lin, and Zhai \cite{FangLinZhai}]\label{lem:core}
Let $H$ be an \(\varepsilon\)-core with sufficiently large size $h$,
and let \(\boldsymbol{x}=(x_v)\) be a nonnegative unit eigenvector for \(\rho(H)\).
If $\rho(H)\geq \sqrt{h}$, then

(i) $x_ux_v>(1-\eps)\big/\big(4\sqrt h\big)$ for every edge $uv\in E(H)$;

(ii) $2h x_u^2\ge(1-2\eps)d_H(u)$ for every vertex $u\in V(H)$ with $d_H(u)< \varepsilon h$.
\end{lemma}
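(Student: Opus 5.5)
The plan is to argue both parts by contradiction, using the defining property of an $\eps$-core: if a bound fails, we exhibit an explicit proper edge-induced subgraph $G'\subsetneq H$ that is $\eps$-dense. In each case $G'$ is obtained by deleting a few edges, and we lower-bound $\rho(G')$ with the Rayleigh quotient of (a restriction of) $\boldsymbol{x}$. The only inputs are the hypothesis $\rho(H)\ge\sqrt h$, which gives $\Phi(H)\ge 1$, and the eigen-equation $\rho x_u=\sum_{v\sim u}x_v$.

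For (i), suppose some edge $uv$ has $x_ux_v\le(1-\eps)/(4\sqrt h)$, and let $G'=H-uv$.
\begin{itemize}
\item Since $e(G')=h-1$, condition (i) of $\eps$-density holds once $h>1/\eps$.
\item Testing $\boldsymbol{x}$ on $A(G')$ gives $\rho(G')\ge \boldsymbol{x}^{T}A(G')\boldsymbol{x}=\rho(H)-2x_ux_v$.
\item Hence
$$\Phi(G')-\Phi(H)\ \ge\ \rho(H)\Big(\tfrac1{\sqrt{h-1}}-\tfrac1{\sqrt h}\Big)-\tfrac{2x_ux_v}{\sqrt{h-1}}\ \ge\ \Big(\sqrt{\tfrac{h}{h-1}}-1\Big)-\tfrac{1-\eps}{2\sqrt{h(h-1)}}.$$
\item Put $t=\sqrt{h/(h-1)}$. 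Then $t-1=\frac{1}{(h-1)(t+1)}$ and $\frac1{\sqrt{h(h-1)}}=\frac{t}{h}$, so the right-hand side equals $\frac{1}{2h}\big(1-\eps+O(1/h)\big)$ up to the precise comparison of these two expressions.
\end{itemize}
We need this to be at least $\eps/(2h)=\eps\,(e(H)-e(G'))/(2e(H))$. This is the step requiring care: the bare $1/(2h)$ versus $(1-\eps)/(2h)$ comparison has no room to spare, so it must come from the $O(1/h)$ corrections. I will expand both exactly in $t$. If the margin proves to have the wrong sign, I would sharpen the test-vector bound, since $\rho(G')>\rho(H)-2x_ux_v$ strictly when $\boldsymbol{x}$ is not an eigenvector of $A(G')$. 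Either way $G'$ is $\eps$-dense, contradicting that $H$ is an $\eps$-core.

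For (ii), let $d=d_H(u)<\eps h$ and suppose $a:=x_u^2<(1-2\eps)d/(2h)$; note $a<\eps/2$. Let $G'=H-u$, i.e.\ delete the $d$ edges at $u$.
\begin{itemize}
\item Since $e(G')=h-d>(1-\eps)h$, condition (i) holds.
\item Test the restriction of $\boldsymbol{x}$ to $V(H)\setminus\{u\}$, which has squared norm $1-a$. The deleted edges contribute $2x_u\sum_{v\sim u}x_v=2\rho a$, so
$$\rho(G')\ \ge\ \rho(H)\,\frac{1-2a}{1-a}=\rho(H)\Big(1-a-\frac{a^2}{1-a}\Big).$$
\item Using $\sqrt{h/(h-d)}\ge 1+d/(2h)$, we get
$$\frac{\Phi(G')}{\Phi(H)}\ \ge\ \Big(1-a-\tfrac{a^2}{1-a}\Big)\Big(1+\tfrac{d}{2h}\Big)\ \ge\ 1+\frac{d}{2h}-(1-2\eps)\frac{d}{2h}-\mathrm{err}=1+\frac{\eps d}{h}-\mathrm{err}.$$
\item Here $\mathrm{err}$ collects $a^2/(1-a)\le a\eps$ and the cross term $a\,d/(2h)$. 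Both are at most a constant times $\eps\cdot\frac{d}{2h}\cdot\eps$, because $a\le d/(2h)<\eps/2$. So for $\eps<1/100$ we have $\mathrm{err}\le \eps d/(2h)$.
\item Multiplying by $\Phi(H)\ge1$ gives $\Phi(G')-\Phi(H)\ge \eps d/(2h)=\eps\,(e(H)-e(G'))/(2e(H))$.
\end{itemize}
Thus $G'$ is $\eps$-dense, again a contradiction.

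The main obstacle is the constant bookkeeping in (i): the target $\eps/(2h)$ sits exactly at the leading-order gain, so lower-order terms must be tracked. Part (ii) has slack of order $\eps d/h$ and is routine once the restricted-vector Rayleigh bound is written down.
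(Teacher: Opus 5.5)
Your strategy is the right one and is the route the paper points to for these Fang--Lin--Zhai estimates: delete $uv$ (resp.\ all edges at $u$), bound $\rho$ of the subgraph by the Rayleigh quotient of $\boldsymbol{x}$, and contradict the $\eps$-core property. As written, however, neither part is actually closed. In (i) you stop at the one comparison that carries the content and leave open that its sign might be wrong. Your fallback would not rescue it, because the strict inequality $\rho(G')>\rho(H)-2x_ux_v$ gives no quantitative gain. In fact the comparison goes the right way. Multiply the required inequality $(t-1)-\frac{(1-\eps)t}{2h}\ge\frac{\eps}{2h}$ by $2h$ and use $t-1=\frac{1}{(h-1)(t+1)}$ and $\frac{h}{h-1}=t^2$; it becomes
\[
\frac{2t^2}{t+1}\ \ge\ (1-\eps)t+\eps .
\]
This holds because $\frac{2t^2}{t+1}>t>(1-\eps)t+\eps$ for $t>1$. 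Also, your sentence saying the right-hand side equals $\frac{1}{2h}(1-\eps+O(1/h))$ is a slip: it is $\frac{\eps}{2h}+O(h^{-2})$.

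In (ii) your error estimate is off by a factor of $\eps$. From $a<\frac{d}{2h}<\frac{\eps}{2}$ you only get $\frac{a^2}{1-a}$ and $\frac{ad}{2h}$ of order $\eps\cdot\frac{d}{2h}$, not $\eps^2\cdot\frac{d}{2h}$. When $d$ is close to $\eps h$ and $a$ is close to $(1-2\eps)\frac{d}{2h}$, one has $\mathrm{err}=(1-O(\eps))\frac{\eps d}{2h}$. So the error eats almost all of the available slack $\frac{\eps d}{2h}$, and what remains is of order $\eps^2 d/h$, not the ``slack of order $\eps d/h$'' you claim. The conclusion survives, but it needs an exact check. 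Put $y=d/h<\eps$ and $c=1-2\eps$. Since $\frac{1-2a}{1-a}$ is decreasing in $a$, it suffices to compare with $a=cy/2$, and
\[
(1-cy)\Big(1+\frac{y}{2}\Big)-\Big(1-\frac{cy}{2}\Big)\Big(1+\frac{\eps y}{2}\Big)=\frac{y}{2}\Big(\eps-c\big(1-\tfrac{\eps}{2}\big)y\Big)>0 .
\]
Hence $\frac{\Phi(G')}{\Phi(H)}\ge\frac{1-2a}{1-a}\big(1+\frac{y}{2}\big)>1+\frac{\eps y}{2}$, and $\Phi(G')-\Phi(H)>\frac{\eps d}{2h}$ because $\Phi(H)\ge1$. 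With these two computations in place, your proof is complete.
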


These are the general dense-core estimates in
\cite[Lemmas 4.3 and 4.4]{FangLinZhai}.
Their proofs use only the definition of an \(\varepsilon\)-core and the Rayleigh quotient.
If a graph $G$ is disconnected, it may possess distinct nonnegative unit eigenvectors associated with \(\rho(G)\).
Nevertheless, there must exist a component \(G_0\) of $G$ satisfying \(\rho(G_0)=\rho(G)\).
By the Perron–Frobenius theorem, $G$ has a nonnegative unit eigenvector \(\boldsymbol{x}=(x_v)\)
such that the restriction of \(\boldsymbol{x}\) to \(V(G_0)\) is entrywise positive,
while all coordinates of \(\boldsymbol{x}\) corresponding to vertices outside \(V(G_0)\) are zero.
As a consequence, assertion (i) of Lemma \ref{lem:core} implies that
every \(\varepsilon\)-core $H$ is connected whenever \(e(H)\) is sufficiently large and \(\rho(H)\geq\sqrt{e(H)}\).

For fixed $k\ge2$, define
\[
 \vartheta_k(t):=
 \begin{cases}
  g^*(t),&k=2,\\
  g_k(t),&k\ge3.
 \end{cases}
\]
From \eqref{eq:gdef} and \eqref{eq:gasymp}, we know that $\vartheta_k(m)=\big(1+o(1)\big)\sqrt{m}$ for sufficiently large $m$.

We now state a lemma that establishes a dense-core reduction procedure preserving both spectral and edge thresholds.
This lemma guarantees that the notion of an \(\varepsilon\)-core is well-defined
for any $m$-edge \(W_{2k+1}\)-free graph $G$ with \(\rho(G)\geq\vartheta_k(m)\).

\begin{lemma}\label{lem:core-reduction}
Fix constants $0<\varepsilon<1/100$ and an integer $k\ge 2$.
Let $G$ be a $W_{2k+1}$-free graph of sufficiently large size $m$.
If $\rho(G)\geq\vartheta_k(m)$, then $G$ contains a connected
$\eps$-core $H$ satisfying $e(H)=(1-o(1))m$ and
$\rho(H)\geq\vartheta_k(e(H)).$
In particular, if \(\rho(H)=\vartheta_k(e(H))\), then $H=G$.
\end{lemma}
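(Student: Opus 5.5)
We build the core by iterated deletion. Set $G_0:=G$. If $G_i$ is not an $\eps$-core, choose a proper $\eps$-dense subgraph $G_{i+1}\subsetneq G_i$ and remove isolated vertices; otherwise stop and put $H:=G_i$. Each step strictly decreases the number of edges, so the process stops at some finite $G_s=H$. Every $G_i$ is a subgraph of $G$, hence $W_{2k+1}$-free.

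The key monotonicity is condition (ii) of $\eps$-density, which gives
\[
\Phi(G_{i+1})-\Phi(G_i)\ge \frac{\eps\,(e(G_i)-e(G_{i+1}))}{2e(G_i)}\ge \frac{\eps\,(e(G_i)-e(G_{i+1}))}{2m}.
\]
Summing over $i$ yields
\[
\Phi(H)\ge \Phi(G)+\frac{\eps\,(m-e(H))}{2m}.
\]
We have $\Phi(G)\ge \vartheta_k(m)/\sqrt m\ge 1$; indeed $g^*(m)>\sqrt m$, and $g_k(m)\ge\sqrt m$ for $m$ large since $g_k(m)^2=m-\binom k2+(k-1)g_k(m)$.

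\textbf{Edge count.} As long as $e(G_i)$ is large, the last assertion of Lemma \ref{thm:stability} gives $\Phi(G_i)\le 1+o(1)$, where $o(1)$ is measured in $e(G_i)$. We first argue that $e(G_i)$ never drops below $m/2$. Each single step loses at most an $\eps$-fraction of the edges. If the process ever reached a first index with $e(G_i)\le (1-\eta)m$ for some fixed small $\eta$, then just before that index $e(G_{i})\ge(1-\eta)(1-\eps)m$ is still large. At that point the summed inequality would give $\Phi\ge 1+\eps\eta(1-\eps)/2$, which contradicts $\Phi\le 1+o(1)$ once $m$ is large. Since $\eta$ is arbitrary, this shows $m-e(H)=o(m)$, that is, $e(H)=(1-o(1))m$.

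\textbf{Spectral threshold.} We need $\rho(H)\ge\vartheta_k(e(H))$. Put $h=e(H)$. From $\Phi(H)\ge \vartheta_k(m)/\sqrt m+\eps(m-h)/(2m)$ we get
\[
\rho(H)\ge \sqrt h\,\frac{\vartheta_k(m)}{\sqrt m}+\frac{\eps(m-h)\sqrt h}{2m}.
\]
The function $\vartheta_k(t)/\sqrt t=1+c_k/(2\sqrt t)+O(1/t)$ with $c_k=k-1\ge1$ is decreasing in $t$ for large $t$, so $\sqrt h\,\vartheta_k(m)/\sqrt m\ge \vartheta_k(h)-O\big((m-h)/m\big)$. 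More precisely, $\sqrt h\,(\vartheta_k(m)/\sqrt m-\vartheta_k(h)/\sqrt h)$ is nonpositive but of order at most $(m-h)h^{-1/2}m^{-1}\cdot\sqrt h\ll \eps(m-h)\sqrt h/m$. The gain term of order $\eps(m-h)/\sqrt m$ therefore dominates this loss, and $\rho(H)\ge\vartheta_k(h)$ follows, with strict inequality whenever $h<m$. This comparison of derivatives is the main obstacle: one must write out $\vartheta_k(t)/\sqrt t$ explicitly and check that its derivative is $O(t^{-3/2})$, while the density gain per unit edge loss is of order $\eps/\sqrt m$. We would verify this directly with the mean value theorem on $[h,m]$.

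\textbf{Equality and connectivity.} For the final claim, suppose $\rho(H)=\vartheta_k(e(H))$. Then strictness forces $h=m$, so no deletion step occurred and $H=G$ up to isolated vertices, which we discard. Connectivity of $H$ follows from Lemma \ref{lem:core}(i) and the remark after it. That remark requires $\rho(H)\ge\sqrt{h}$, which holds because $\vartheta_k(h)\ge\sqrt h$. The remark then shows every edge lies in the component carrying the Perron vector, so $H$ is connected.
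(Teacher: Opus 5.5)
Your proposal is correct and follows essentially the same route as the paper: iterated removal of $\eps$-dense subgraphs, the summed gain $\Phi(H)-\Phi(G)\ge \eps(m-e(H))/(2m)$, the stability bound $\Phi\le 1+o(1)$, a mean-value comparison of $|\Psi_k'(t)|=O(t^{-3/2})$ against a gain of order $\eps/m$ per lost edge, equality forcing $h=m$, and connectivity from Lemma~\ref{lem:core}(i). The only real difference is in the edge count. The paper first obtains $e(H)\ge c_\eps m$ from a logarithmic estimate together with $\Phi\le\sqrt2$, and then upgrades this to $e(H)=(1-o(1))m$; you instead use a first-crossing argument, where condition (i) guarantees that the first $G_i$ with $e(G_i)\le(1-\eta)m$ still has more than $(1-\eps)(1-\eta)m$ edges, so stability applies to it directly. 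This shortcut is valid and slightly more economical; just note that the argument must be applied to that crossing graph $G_i$ itself, whose gain is already at least $\eps\eta/2$, not to the graph ``just before'' it.
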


\begin{proof}
We begin by performing the iteration on $G$ to extract a desired \(\varepsilon\)-core $H$.
If $G$ itself is an $\eps$-core, there is nothing to prove,
since the connectedness of such an \(\varepsilon\)-core has been established earlier.
Now, $H=G$.
Otherwise, set $G_0:=G$ and $m_0:=m$.
Whenever $G_i$ is not an $\eps$-core, choose an $\eps$-dense subgraph
$G_{i+1}\subsetneq G_i$.  Write
$m_i=e(G_i)$ and $\Delta_i=m_i-m_{i+1}.$
By the definition of $\eps$-dense subgraphs, we know
$0<\Delta_i<\eps m_i$ and
\begin{equation}
        \Phi(G_{i+1})-\Phi(G_i)
        \ge \eps\Delta_i\big/(2m_i).
        \label{eq:core-stepgain}
\end{equation}

The number of edges is a strictly decreasing positive integer, so the
process terminates after some steps, say at $G_r=:H$.
Then $r\geq1$, $m_r<m$ and $m_i\leq m$ for all $i\in \{0,\ldots,r-1\}$. Since
$W_{2k+1}$-freeness is hereditary, $H$ is $W_{2k+1}$-free.
It suffices to show $e(H)=(1-o(1))m$ and
$\rho(H)>\vartheta_k(e(H)),$
since the connectedness of $H$ has already been established
whenever both the edge and spectral thresholds hold.

We first prove $e(H)=(1-o(1))m$.
Summing \eqref{eq:core-stepgain} over $i$ from $0$ to $r-1$ yields
\begin{equation}
 \Phi(H)-\Phi(G)
 \ge\frac{\eps}{2}\sum_{i=0}^{r-1}\frac{\Delta_i}{m_i}.
        \label{eq:core-totalgain}
\end{equation}
We first prove that $e(H)=m_r$ is still of linear order in $m$.  Set
$x_i=\Delta_i/m_i$.  Then we have $0<x_i<\eps$ and
$m_{i+1}=m_i(1-x_i)$. Thus,
\[
\log\frac {m_0}{m_r}=\sum_{i=0}^{r-1}\log\frac{m_i}{m_{i+1}}=\sum_{i=0}^{r-1}-\log(1-x_i).
\]
For $x\in [0,\eps]$, we have
$-\log(1-x)=\int_0^x\frac{dt}{1-t}\le\frac{x}{1-\eps}.$
Since $x_i\in (0,\eps)$, we obtain
\[
 \sum_{i=0}^{r-1}\frac{\Delta_i}{m_i}=\sum_{i=0}^{r-1}x_i\geq (1-\eps)\sum_{i=0}^{r-1}-\log(1-x_i)
=(1-\eps)\log\frac m{m_r}.
\]
Together with \eqref{eq:core-totalgain}, this gives
\begin{equation}
 \Phi(H)-\Phi(G)
 \ge\frac{\eps(1-\eps)}2\log\frac m{m_r}.
        \label{eq:core-loggain}
\end{equation}
We know that
$\rho(H)^2\le\operatorname{tr}(A(H)^2)=2e(H)$, and therefore
$\Phi(H)=\rho(H)/\sqrt{e(H)}\le\sqrt2$.  On the other hand, the assumption $\rho(G)\geq\vartheta_k(m)$ implies
$\Phi(G)\ge\vartheta_k(m)/\sqrt m=1+o(1)$.  Thus,
\eqref{eq:core-loggain} gives $\log(m/m_r)=O_{\eps}(1)$, and so
$m_r\ge c_{\eps}m$ for some constant $c_{\eps}>0$.

We next upgrade the inequality $m_r\ge c_{\eps}m$ to
\begin{equation}
       m_r=\big(1-o(1)\big)m.
        \label{eq:core-nearm}
\end{equation}
Indeed, if this failed, then $m_r\le(1-\alpha)m$ for some fixed $\alpha>0$.  Since $m_i\le m$,
\eqref{eq:core-totalgain} gives
\begin{equation}
 \Phi(H)-\Phi(G)
 \ge\frac{\eps}{2m}\sum_{i=0}^{r-1}\Delta_i
 =\frac{\eps(m-m_r)}{2m}
 \ge\frac{\eps\alpha}{2}.
        \label{eq:core-fixedgain}
\end{equation}
But $H$ is $W_{2k+1}$-free and $m_r\ge c_{\eps}m$, so
Lemma~\ref{thm:stability} gives $\Phi(H)\le1+o(1)$, whereas
$\Phi(G)\ge1+o(1)$.  This contradicts \eqref{eq:core-fixedgain}.
Hence, \eqref{eq:core-nearm} holds.

Next, we show that $\rho(H)\geq\vartheta_k(e(H)).$  Define
$\Psi_k(t):=\frac{\vartheta_k(t)}{\sqrt t}.$
If $k=2$, then
$\Psi_2(t)=\frac{g^*(t)}{\sqrt t}=\frac1{2\sqrt t}+\sqrt{1+\frac1{4t}},$
and hence
\[
 \Psi_2'(t)=-\frac1{4t^{3/2}}
 -\frac1{8t^2\sqrt{1+1/(4t)}}.
\]
If $k\ge3$, then $\Psi_k(t)=\frac{g_k(t)}{\sqrt t}=\frac{k-1}{2\sqrt t}+\sqrt{1-\frac{k^2-1}{4t}},$
so
\[
 \Psi_k'(t)=-\frac{k-1}{4t^{3/2}}
 +\frac{k^2-1}{8t^2\sqrt{1-(k^2-1)/(4t)}}.
\]
Thus, in both cases, there is a constant $C_k$ such that
\begin{equation}
        |\Psi_k'(t)|\le C_kt^{-3/2}
        \label{eq:core-derivative}
\end{equation}
for all sufficiently large $t$.

By \eqref{eq:core-nearm}, we may assume $m_r\ge m/2$.
The mean value theorem gives
$\big|\Psi_k(m_r)-\Psi_k(m)\big|=\big|\Psi_k'(t_0)\big|(m-m_r)$
for some $t_0\in (m_r,m)\subseteq (m/2,m).$
Combining this with \eqref{eq:core-derivative} yields
\begin{equation}
 \big|\Psi_k(m_r)-\Psi_k(m)\big|\leq2\sqrt{2}C_km^{-3/2}(m-m_r).
\label{eq:core-drift}
\end{equation}
Meanwhile, from \eqref{eq:core-totalgain} and $m_i\le m$ for $i\in \{0,\ldots,r-1\}$,
we know that
\begin{equation}
 \Phi(H)-\Phi(G)\ge\frac{\eps}{2m}\sum_{i=0}^{r-1}\Delta_i
=\frac\eps 2m^{-1}(m-m_r).
\label{eq:core-finalgain}
\end{equation}
Since $m_r<m$, the right-hand side of \eqref{eq:core-finalgain} is strictly greater than that of \eqref{eq:core-drift}.
Moreover, the condition $\rho(G)\geq\vartheta_k(m)$ gives $\Phi(G)\geq\Psi_k(m).$
Consequently,
\begin{equation*}
 \Phi(H)\!-\!\Psi_k(m_r)
 =\bigl(\Phi(G)\!-\!\Psi_k(m)\bigr)\!+\!\bigl(\Phi(H)\!-\!\Phi(G)\bigr)
 \!-\!\bigl(\Psi_k(m_r)\!-\!\Psi_k(m)\bigr)>0.
\end{equation*}
We therefore conclude that \(\rho(H)>\vartheta_k(e(H))\), since $e(H)=m_r$.
\end{proof}

Let $G$ be a connected graph.
By the Perron–Frobenius theorem,
there exists a positive unit eigenvector corresponding to $\rho(G)$,
which is called the \emph{Perron vector} of $G$.
The lemma below concerns a residual function $f(w)$.
This function was introduced by Zhai, Li, and Lou \cite{ZhaiLiLou} to precisely characterize extremal graph structures.

\begin{lemma}\label{lem:residual}
Let $G$ be a connected graph with spectral radius $\rho$ and Perron vector \(\boldsymbol{x}=(x_v)\).
Choose $u^*$ with
$x_{u^*}=\max_{v\in V(G)}x_v$. Define
$U=N_G(u^*)$,
$W=V(G)\setminus(\{u^*\}\cup U)$,
and
$f(w)=d_U(w)(x_{u^*}\!-\!x_w)+\frac12d_W(w)x_{u^*}$
for any $w\in W$.
For arbitrary real constants $c$ and $d$, we have
\begin{equation}\label{eq:resid2}
\bigl(\rho^2\!-\!d\rho\!+\!c\!-\!e(G)\bigr)x_{u^*}
=\!\!\!\sum_{uv\in E(U)}\!\!\!(x_u\!+\!x_v\!-\!x_{u^*})
\!-\!(d\rho\!-\!c)x_{u^*}\!-\!\sum_{w\in W}\!f(w).
\end{equation}
\end{lemma}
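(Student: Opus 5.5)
The identity is purely algebraic. The plan is to expand $\rho^2 x_{u^*}$ with the eigen-equation applied twice, split $e(G)$ according to the partition $V(G)=\{u^*\}\cup U\cup W$, and compare the two sides. The constants $c$ and $d$ play no real role: the term $(-d\rho+c)x_{u^*}$ occurs on both sides of \eqref{eq:resid2}, so it suffices to prove the case $c=d=0$, namely
\[
\bigl(\rho^2-e(G)\bigr)x_{u^*}=\sum_{uv\in E(U)}(x_u+x_v-x_{u^*})-\sum_{w\in W}f(w).
\]

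First I apply $\rho x_{u^*}=\sum_{v\in U}x_v$ and then $\rho x_v=\sum_{z\in N_G(v)}x_z$ for each $v\in U$. This gives
\[
\rho^2x_{u^*}=\sum_{v\in U}\sum_{z\in N_G(v)}x_z .
\]
A neighbour $z$ of some $v\in U$ is either $u^*$, or lies in $U$, or lies in $W$. The vertex $u^*$ appears once for each $v\in U$, contributing $|U|x_{u^*}$. The neighbours inside $U$ contribute $\sum_{v\in U}d_U(v)x_v$, and grouping terms by edges of $G[U]$ rewrites this as $\sum_{uv\in E(U)}(x_u+x_v)$. A vertex $w\in W$ is counted once for each of its neighbours in $U$, contributing $\sum_{w\in W}d_U(w)x_w$. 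Hence
\[
\rho^2x_{u^*}=|U|x_{u^*}+\sum_{uv\in E(U)}(x_u+x_v)+\sum_{w\in W}d_U(w)x_w .
\]

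Next I split the edge set. Since $W$ contains no neighbour of $u^*$, every edge of $G$ is of exactly one of four types: an edge from $u^*$ to $U$, an edge inside $U$, an edge between $U$ and $W$, or an edge inside $W$. Therefore
\[
e(G)=|U|+e(G[U])+\sum_{w\in W}d_U(w)+\tfrac12\sum_{w\in W}d_W(w).
\]
I multiply this by $x_{u^*}$ and subtract it from the expansion of $\rho^2x_{u^*}$. The $|U|x_{u^*}$ terms cancel. Writing $e(G[U])\,x_{u^*}$ as a sum of $x_{u^*}$ over the edges of $E(U)$ produces $\sum_{uv\in E(U)}(x_u+x_v-x_{u^*})$. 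The remaining $W$-terms combine to
\[
-\sum_{w\in W}\Bigl[d_U(w)(x_{u^*}-x_w)+\tfrac12 d_W(w)x_{u^*}\Bigr]=-\sum_{w\in W}f(w).
\]
This proves the case $c=d=0$. Adding $(-d\rho+c)x_{u^*}$ to both sides then yields \eqref{eq:resid2}.

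There is no genuine obstacle in this argument. The only points needing care are the double counting: each edge of $G[U]$ must be counted exactly once, and the edges inside $W$ carry the factor $\tfrac12$ because each such edge is seen from both of its endpoints. Positivity of the Perron vector and the maximality of $x_{u^*}$ are not used in the identity itself. They matter only later, where they show that $f(w)\ge 0$.
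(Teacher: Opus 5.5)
Your proof is correct and is essentially the paper's argument: you expand $\rho^2x_{u^*}$ by applying the eigen-equation twice, split $e(G)$ as $|U|+e(U)+e(U,W)+e(W)$, and collect the $W$-terms into $\sum_{w\in W}f(w)$. The only difference is cosmetic: you remove the common term $(c-d\rho)x_{u^*}$ at the start, whereas the paper subtracts $(d\rho-c+e(G))x_{u^*}$ at the end.
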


\begin{proof}
By the eigen-equation at $u^*$, we derive
$\rho^2x_{u^*}=\sum_{v\in U}\sum_{u\in N(v)}x_u.$
This further yields
\begin{equation*}
\rho^2x_{u^*}
=|U|x_{u^*}
\!+\!\!\sum_{uv\in E(U)}\!\!(x_u\!+\!x_v)
\!+\!\sum_{w\in W}d_U(w)x_w.
\end{equation*}
Since $e(U,W)=\sum_{w\in W}d_U(w)$ and $e(W)=\frac12\sum_{w\in W}d_W(w)$, we may rewrite
\[
\begin{aligned}
|U|x_{u^*}\!+\!\sum_{w\in W}d_U(w)x_w
  =\big(|U|\!+\!e(U,W)\!+\!e(W)\big)x_{u^*}\!-\!\sum_{w\in W}\!f(w).
\end{aligned}
\]
Note that $|U|+e(U,W)+e(W)=e(G)-e(U)$. It follows that
\begin{equation}\label{eq:resid1}
\rho^2x_{u^*}=e(G)x_{u^*}\!+\!\!\sum_{uv\in E(U)}\!\!(x_u\!+\!x_v\!-\!x_{u^*})\!-\!\sum_{w\in W}\!f(w).
\end{equation}
Subtracting $(d\rho-c+e(G))x_{u^*}$ from both sides of \eqref{eq:resid1},
we obtain the desired equality \eqref{eq:resid2}.
\end{proof}

\section{Sharpness of Theorems \ref{thm1} and \ref{thm2}}\label{sec3}

The following three propositions establish the sharpness of our main theorems.
\begin{proposition}\label{prop:w5}
Let $n$ be even and let $G_n$ be a graph of size $m=n^2+n$ obtained from $K_{n,n}$ by inserting
a perfect matching in each part.  Then $G_n$ is $W_5$-free.
Moreover, we have $\rho(G_n)=g^*(m)$
and $\rho(G_n)^2-\rho(G_n)=m.$
\end{proposition}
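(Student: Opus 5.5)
The proposition has three parts: the edge count, the spectral radius, and $W_5$-freeness. We handle them in that order.

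The edge count is immediate. $K_{n,n}$ contributes $n^2$ edges and each of the two perfect matchings contributes $n/2$ edges, so $m=n^2+n$. For the spectrum, observe that $G_n$ is $(n+1)$-regular: every vertex has its $n$ neighbours across the bipartition plus its matching partner. Hence $\rho(G_n)=n+1$, and
\[
\rho^2-\rho=(n+1)n=n^2+n=m.
\]
Since $g^*(m)$ is the positive root of $t^2-t=m$, this gives $\rho(G_n)=g^*(m)$. For $m=n^2+n$ one can also check directly that $\sqrt{4m+1}=2n+1$.

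The real work is $W_5$-freeness. By the remark in the introduction, it suffices to show that $G_n[N(v)]$ is $C_4$-free for every vertex $v$. By symmetry take $v\in S$, with matching partner $v'\in S$. Then
\[
N(v)=\{v'\}\cup T.
\]
Inside $N(v)$, the vertex $v'$ is adjacent to all of $T$, and $T$ induces the perfect matching $M_T$. So $G_n[N(v)]$ is $K_1\vee (n/2)K_2$, a friendship graph consisting of $n/2$ triangles sharing the common vertex $v'$.

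A friendship graph contains no $C_4$. To see this, note that every edge lies in one of the triangles, and the triangles pairwise meet only at $v'$. Any cycle must stay within a single block, and the blocks are triangles, so every cycle has length $3$. In particular there is no $4$-cycle.

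This step is the only nontrivial point, and once the neighbourhood is identified the block argument disposes of it quickly. Therefore $G_n$ contains no $W_5=K_1\vee C_4$.
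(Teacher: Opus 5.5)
Your proposal is correct and follows the paper's proof: $(n+1)$-regularity gives $\rho(G_n)=n+1$ and hence $\rho^2-\rho=m$ and $\rho=g^*(m)$, and $G_n[N(v)]$ is $v'$ joined to a matching in the opposite part, which contains no $C_4$. Your block argument for the friendship graph just fills in the detail the paper leaves implicit.
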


\begin{proof}
The graph $G_n$ is clearly $(n+1)$-regular, so $\rho(G_n)=n+1$.
Since $m=n^2+n$, we have $\rho(G_n)^2-\rho(G_n)=m$.
This further implies that $\rho(G_n)=\frac12\big(1+\sqrt{4m+1}\big)=g^*(m)$.

If $v$ belongs to one part and $v'$ is
its matching partner, then $G_n[N(v)]$ is the join of $v'$ with a
matching in the opposite part, and hence contains no $C_4$.
\end{proof}

\begin{proposition}\label{prop:w2k+1}
Let $G=K_k\vee qK_1$,
where $q$ is a positive integer such that ${k \choose 2}+kq=m$.
Then $G$ is $W_{2k+1}$-free. Moreover, we have $\rho(G)=g_k(m)$ and
\begin{equation}\label{eq:thresholdidentity}
\rho(G)^2-(k-1)\rho(G)=m-{k \choose 2}.
\end{equation}
%which implies that
%\begin{equation}\label{eq:gasymp}
%g_k(m)=\sqrt m+\frac{k-1}{2}+O_k(m^{-1/2}).
%\end{equation}
\end{proposition}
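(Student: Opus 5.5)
Two things have to be shown: that $G=K_k\vee qK_1$ contains no $W_{2k+1}$, and that its spectral radius is given by a quadratic equation which forces $\rho(G)=g_k(m)$.

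\emph{Freeness.} We use the observation from the introduction: $G$ is $W_{2k+1}$-free if and only if $G[N_G(v)]$ is $C_{2k}$-free for every $v$. Write $A$ for the clique part $K_k$ and $B$ for the independent part $qK_1$. We check the two vertex types separately.
\begin{itemize}
\item If $v\in B$, then $N_G(v)=A$, so $G[N_G(v)]=K_k$. This graph has only $k<2k$ vertices and cannot contain $C_{2k}$.
\item If $v\in A$, then $G[N_G(v)]\cong K_{k-1}\vee qK_1$. Here $B$ is independent, so any cycle must have its $B$-vertices pairwise non-consecutive. Each $B$-vertex on a cycle therefore needs two cycle-neighbours from the $k-1$ clique vertices, and these are distinct unless the cycle is a triangle. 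Hence a cycle of length $\ell\ge4$ contains at most $k-1$ vertices of $B$, so $\ell\le 2(k-1)<2k$. Thus no $C_{2k}$ occurs.
\end{itemize}
This is the only step needing any care.

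\emph{Spectral radius.} $G$ is connected, and the partition $(A,B)$ is equitable. Its quotient matrix is
\[
Q=\begin{pmatrix} k-1 & q\\ k & 0\end{pmatrix}.
\]
By symmetry the Perron vector is constant on $A$ and on $B$, with values $a$ and $b$ say. The eigen-equations give $\rho a=(k-1)a+qb$ and $\rho b=ka$. Substituting $b=ka/\rho$ into the first equation yields
\[
\rho^2-(k-1)\rho-kq=0 .
\]
Since $kq=m-\binom{k}{2}$, this is exactly \eqref{eq:thresholdidentity}.

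Taking the positive root,
\[
\rho=\tfrac12\Big(k-1+\sqrt{(k-1)^2+4m-2k(k-1)}\Big)=\tfrac12\Big(k-1+\sqrt{4m-k^2+1}\Big)=g_k(m),
\]
because $(k-1)^2-2k(k-1)=-(k^2-1)$.
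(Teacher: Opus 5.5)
Your proof is correct and follows the paper's route: you show $W_{2k+1}$-freeness by checking that each neighbourhood $G[N_G(v)]$ is $C_{2k}$-free, and you get $\rho^2-(k-1)\rho=kq=m-\binom{k}{2}$ from the two symmetric eigen-equations $\rho x_u=(k-1)x_u+qx_w$ and $\rho x_w=kx_u$, then take the positive root $g_k(m)$. Your bound on cycle lengths in $K_{k-1}\vee qK_1$ spells out what the paper only asserts. It reads most cleanly as a count: the $B$-vertices of any cycle are pairwise non-consecutive, so there are at most as many of them as clique vertices on the cycle, hence at most $k-1$, which gives $\ell\le 2(k-1)$.
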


\begin{proof}
For every vertex $v\in V(G)$,
the longest cycle contained in the induced subgraph $G[N_G(v)]$ has length less than $2k$.
Thus, $G$ is $W_{2k+1}$-free.
Let $\rho=\rho(G)$, and let $\bm x=(x_v)$ be the non-negative unit eigenvector corresponding to $\rho(G)$.
By the symmetry of $G$,
for any vertex $u$ belonging to the $k$-clique and any vertex $w$ lying in the independent set of size $q$,
we have $\rho x_u=(k-1)x_u+qx_w$ and $\rho x_w=kx_u$.
Substituting the relation ${k \choose 2}+kq=m$ into the above two equations yields identity \eqref{eq:thresholdidentity}.
Solving this system further gives $\rho(G)=\frac12\big(k-1+\sqrt{4m-k^2+1}\big)=g_k(m)$.
\end{proof}

\begin{proposition}\label{lem:w5-matching}
Let $G$ have a partition $V(G)=S\cup T$ such that
$\Delta(G[S])\le1$ and $\Delta(G[T])\le1.$
Then $\rho(G)^2-\rho(G)\le e(G).$
If equality holds, then there exists an
even integer $n$ such that
$G\cong K_{n,n}+M_S+M_T,$
where $M_S,M_T$ are perfect matchings embedded in $S$ and $T$ respectively.
\end{proposition}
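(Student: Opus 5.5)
The plan is to bound $\rho^2-\rho$ by a Nosal-type double counting applied to the Perron vector. Let $\rho=\rho(G)$, and restrict to a component attaining $\rho$, so that the Perron vector $\bm x=(x_v)$ can be taken positive and of unit length. We use the eigen-equation in the form
\[
\rho^2-\rho=\sum_{u}\Big(\sum_{v\in N(u)}x_v\Big)^2-\sum_u x_u\sum_{v\in N(u)}x_v .
\]
Split the edge set as $E(G)=E_B\cup E_M$, where $E_B$ consists of the $S$--$T$ edges and $E_M$ is the union of the two matchings inside $S$ and inside $T$. For each $u$ write $u'$ for its matching partner, if one exists. Then
\[
\sum_{v\in N(u)}x_v=\beta_u+x_{u'},\qquad \beta_u:=\sum_{v\in N_B(u)}x_v ,
\]
with the convention $x_{u'}=0$ when $u$ has no partner.

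\textbf{Step 1 (bipartite part).} Apply Cauchy--Schwarz to $\beta_u$ and then exchange the order of summation:
\[
\sum_u\beta_u^2\le\sum_u d_B(u)\sum_{v\in N_B(u)}x_v^2=\sum_v x_v^2\sum_{u\in N_B(v)}d_B(u).
\]
All $B$-neighbours of $v$ lie on the opposite side. Since $B$ is bipartite, the $B$-edges incident to these neighbours are pairwise distinct, so the inner sum is at most $e(B)$. This gives $\sum_u\beta_u^2\le e(B)$, which is exactly Nosal's argument.

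\textbf{Step 2 (matching corrections).} Expanding $(\beta_u+x_{u'})^2$ produces the extra terms $2\beta_u x_{u'}+x_{u'}^2$. Subtracting $\sum_u x_u(\beta_u+x_{u'})$ leaves the remainder
\[
R:=\sum_u\big(2\beta_u x_{u'}+x_{u'}^2-x_u\beta_u-x_ux_{u'}\big).
\]
Summed over a matched pair $\{u,u'\}$, the quadratic terms $x_{u'}^2+x_u^2-2x_ux_{u'}$ combine to $(x_u-x_{u'})^2$. The cross terms become $\beta_u(2x_{u'}-x_u)+\beta_{u'}(2x_u-x_{u'})$, while unmatched vertices contribute only $-x_u\beta_u\le0$.

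The goal is to show $R\le e(M)=|E_M|$. This is where Step 1 must be used with slack. In the Cauchy--Schwarz step, the count $\sum_{u\in N_B(v)}d_B(u)\le e(B)$ loses all $B$-edges not incident to $N_B(v)$. Instead of Step 1 as stated, I would apply Cauchy--Schwarz to the full neighbourhood sums with weights $d(u)$ and bound
\[
\sum_{u\in N(v)}d(u)\le e(G)+e\big(G[N(v)]\big).
\]
For the resulting excess I would verify that $e(G[N(v)])$ is dominated by what the linear term $x^{\top}Ax$ subtracts. Here $G[N(v)]$ consists of at most a matching together with the star from $v'$ to $N_B(v)$; the star contributes about $x_{v'}\beta_{v}$, and this should be compensated by the $-x_ux_v$ terms coming from $\rho$.

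\textbf{Main obstacle.} Making this compensation exact, i.e.\ obtaining $\rho^2-\rho\le e(G)$ with no error term, is the step I expect to be hardest. What I would try first is to write everything edge by edge as $\sum_{uv\in E}(\cdots)$ and use $2ab\le a^2+b^2$ to absorb each cross term $\beta_u x_{u'}$ into the slack from Step 1.

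\textbf{Equality.} If $\rho^2-\rho=e(G)$, every inequality used must be tight. Tightness in Cauchy--Schwarz forces $x$ to be constant on each neighbourhood. Tightness in the Nosal count forces every $B$-edge to meet $N_B(v)$ for every $v$, so $B$ is complete bipartite $K_{S,T}$. Tightness in the matching step forces every vertex to be matched, so $G$ is regular of degree $|T|+1=|S|+1$. Hence $|S|=|T|=n$, $n$ is even because $S$ carries a perfect matching, and $G\cong K_{n,n}+M_S+M_T$.
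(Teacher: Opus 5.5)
\textbf{Gap: the main inequality is never proved.} Your Step~1 is correct; it is Nosal's bound $\sum_u\beta_u^2\le e(B)$. But the complementary estimate $R\le e(M)$ is only stated as a goal, and you yourself leave the ``exact compensation'' as the main obstacle.

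The fallback you sketch cannot close this gap. Cauchy--Schwarz with weights $d(u)$, together with $\sum_{u\in N(v)}d(u)\le e(G)+e(G[N(v)])$, gives $\rho^2\le e(G)+\sum_v x_v^2\,e(G[N(v)])$. Your plan then needs $\sum_v x_v^2\,e(G[N(v)])\le\rho$, and this is false for the extremal graph itself. In $K_{n,n}+M_S+M_T$ the Perron vector is constant. Moreover $e(G[N(v)])=n+\frac n2$: the star from $v'$ to the opposite part, plus the matching inside that part. So the left side equals $\frac{3n}{2}$, which exceeds $n+1=\rho$ for $n\ge4$. Even if $\rho$ pays for the star, nothing pays for the matching inside $N_B(v)$.

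The intermediate bound is thus strictly lossy exactly in the equality case. The slack you discarded is the $\frac n2-1$ matching edges on $v$'s own side that miss $N(v)$. No later absorption via $2ab\le a^2+b^2$ can then recover the sharp bound $\rho^2-\rho\le e(G)$. Your equality analysis has the same problem: it relies on tightness of steps that were never established. For example, no proved inequality forces every vertex to be matched.

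\textbf{The missing idea.} The paper does not use a global Cauchy--Schwarz; it works at the maximal Perron entries on each side. Set $q=e(S,T)$, $r_S=e(S)$, $r_T=e(T)$, $a=\max_{u\in S}x_u$ and $b=\max_{v\in T}x_v$, and pick $u\in S$ with $x_u=a$. Since $u$ has at most one neighbour in $S$, we get $\sum_{v\in N_T(u)}x_v\ge(\rho-1)a$. Multiply by $\rho$ and expand over $N_T(u)$: cross edges contribute at most $qa$, and matching edges in $T$ contribute at most $2r_Tb$. This gives $(\rho^2-\rho-q)a\le 2r_Tb$, and symmetrically $(\rho^2-\rho-q)b\le 2r_Sa$.

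If $\rho^2-\rho-q\le 0$ there is nothing to prove. Otherwise, multiplying the two inequalities gives $(\rho^2-\rho-q)^2\le4r_Sr_T\le(r_S+r_T)^2$, which is $\rho^2-\rho\le e(G)$. In the equality case, this chain forces $r_S=r_T$, $a=b$, $N_T(u)=T$, and a constant Perron vector, hence regularity. Degree counting then yields $|S|=|T|=n$ and the structure $K_{n,n}+M_S+M_T$.
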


\begin{proof}
Consider a component $G_0$ with $\rho(G_0)=\rho(G)$.
Since $e(G_0)\le e(G)$, proving the inequality for $G_0$
proves it for $G$.  If equality holds for $G$, then necessarily
$e(G_0)=e(G)$, so every other component is isolated.  We may therefore
assume that $G$ is connected.

Denote $q=e(S,T),$ $r_S=e(S),$ $r_T=e(T),$
and write $\rho=\rho(G)$.
The connectedness of $G$ guarantees that $G$ admits a Perron vector \(\boldsymbol{x}=(x_v)\).
Define
$a=\max_{u\in S}x_u$ and $b=\max_{v\in T}x_v.$
Choose $u\in S$ with $x_u=a$.  Since $u$ has at most one neighbor in
$S$ and every Perron coordinate is at most $a$ on $S$, we have
\[
 \sum_{v\in N_T(u)}x_v
 =\rho x_u-\sum_{w\in N_S(u)}x_w\ge\big(\rho-1\big)a.
\]
Multiplying by $\rho$ and expanding at the vertices of $N_T(u)$ gives
$\rho\sum_{v\in N_T(u)}x_v
 =\sum_{v\in N_T(u)}\sum_{w\in N(v)}x_w.$
The contributions $x_w$ from $w\in S$ use at most $q$ cross edges, each with
coordinate at most $a$.  The contributions $x_w$ from $w\in T$ use at most
$2r_T$ endpoints of the matching in $G[T]$, each with coordinate at most
$b$. Combining the above two inequalities yields
$\rho(\rho-1)a\le qa+2r_Tb,$ i.e.,
\begin{equation}
        \big(\rho(\rho-1)-q\big)a\le 2r_Tb.
        \label{eq:w5-match1}
\end{equation}
By symmetry, we have
\begin{equation}
  \big(\rho(\rho-1)-q\big)b\le 2r_Sa.
        \label{eq:w5-match2}
\end{equation}
Let $D_0=\rho(\rho-1)-q$.  If $D_0\le0$, then
$\rho^2-\rho\le q\le e(G)$.  If $D_0>0$, multiplying the preceding
inequalities gives
$D_0^2\le4r_Sr_T.$
Therefore, we also have
\[
 \rho^2-\rho=q+D_0
 \le q+2\sqrt{r_Sr_T}
 \le q+r_S+r_T=e(G).
\]

Suppose equality $\rho^2-\rho=e(G)$ holds. Then \(D_0\geq0\), and equality must hold at all preceding steps.
If $D_0=0$, then $q=e(G)$ and $G$ is bipartite. Consequently,
$\rho(G)^2\leq e(G)$, which contradicts $\rho^2-\rho=e(G)$. Thus $D_0>0$.
In particular, \(r_S=r_T\), and equality holds in both \eqref{eq:w5-match1} and \eqref{eq:w5-match2}.
Equality in the first Perron estimate enforces that the selected maximizer vertex \(u\in S\) possesses exactly one neighbor inside $S$,
and this neighbor carries coordinate $a$.
Moreover, for every cross edge, its endpoint in $T$ lies in $N_T(u)$, while its endpoint in $S$ has Perron coordinate $a$. Similarly, every endpoint of an edge in $G[T]$ lies in $N_T(u)$ and has Perron coordinate $b$. Since $G$ has no isolated vertices, it follows that $N_T(u)=T$.
By symmetry, any vertex attaining the maximum Perron coordinate in $T$ is adjacent to every vertex in $S$.
Furthermore, since \(r_S=r_T\), the equality conditions in \eqref{eq:w5-match1} and \eqref{eq:w5-match2} together entail \(a=b\).
Repeating the foregoing saturation reasoning shows that every vertex incident to an edge bears the identical Perron coordinate \(a=b\).
Connectedness of $G$ consequently forces the Perron vector to be constant over all vertices, implying that $G$ is regular.

Write $|S|=s$ and $|T|=t$.  The maximum vertex in $S$ has degree
$t+1=\rho$, while the maximum vertex in $T$ has degree $s+1=\rho$.
Thus $s=t=:n$.  Every vertex has degree $n+1$, but it has at most $n$
cross neighbors and at most one same-side neighbor.  Hence, every cross
edge is present and every vertex has exactly one same-side neighbor.
Thus, the cross graph is $K_{n,n}$ and the two internal graphs are
perfect matchings.
\end{proof}

%\section{A spectral-preserving dense-core reduction}

\section{Stability partition and Perron localization}\label{sec4}

Throughout this section, \(k\ge 2\) is fixed.
We select constants satisfying the hierarchy
\begin{equation}\label{eq:parameter-hierarchy}
0<\eps_0\ll\eta^2\ll\eta\ll 1.
\end{equation}
Additionally, if \(k\ge 3\), we further require \(\eta\ll 1/k\).
All asymptotic statements are taken as $m\to\infty$ with
$k,\varepsilon_0$ and $\eta$ fixed.

The dense-core reduction is always applied with $\eps_0$.
By Lemma \ref{lem:core-reduction}, the contradiction arguments in later sections may be reduced to
a connected \(W_{2k+1}\)-free \(\varepsilon_0\)-core graph $G$ with a sufficiently large number of edges $m$ such that
\begin{equation}
       \rho(G)\geq\vartheta_k(m),
        \label{eq:core-rhosq}
\end{equation}
where $\vartheta_2(m)=g^*(m)$ and $\vartheta_k(m)=g_k(m)$ for every fixed $k\geq3$.
Combining \eqref{eq:core-rhosq} with \eqref{eq:gdef} and \eqref{eq:gasymp}, we know that $\rho(G)\geq\vartheta_k(m)>\sqrt{m}$.

On the other hand,
Lemma~\ref{thm:stability} gives that \(\rho(G)^2=(1+o(1))m\),
and there exist two disjoint vertex subsets $A,B\subseteq V(G)$ such that
\begin{equation}
        D:=d(G,K_{A,B})=o(m).
        \label{eq:D}
\end{equation}
Among all such pairs, choose $A,B$ first so that $D$ is minimum and, subject
to this, so that $|R|$ is minimum, where $R=V(G)\setminus(A\cup B)$.
Without loss of generality, write $a=|A|\le b=|B|.$
Since every discrepancy between the $m$ edges of $G$ and the $ab$ edges of $K_{A,B}$ belongs to
the symmetric difference, we have $|m-ab|\le D.$
Combining this with \eqref{eq:D} gives
\begin{equation}
        ab=\big(1+o(1)\big)m.
        \label{eq:abm}
\end{equation}
For any $w\in V(G)$, let $N_A(w)=N_G(w)\cap A$ and $d_A(w)=|N_A(w)|$.
The minimality of $D$ and $|R|$ implies that
\begin{equation}
\begin{split}
 d_B(u)&\ge b/2 \quad(u\in A),\\
 d_A(v)&\ge a/2 \quad(v\in B),\\
 d_A(w)&< a/2,~~ d_B(w)< b/2 \quad(w\in R).
\end{split}
\label{eq:minedit}
\end{equation}
Indeed, if $u\in A$ had $d_B(u)<b/2$, removing $u$ from $A$ would
replace $b-d_B(u)$ missing cross edges by only $d_B(u)$ extra edges,
strictly decreasing $D$.  The other assertions follow in the same
way by moving a vertex into or out of a part.
%Moreover, every edge
%incident with $R$ belongs to the error set, and $G$ has
%no isolated vertices. Thus,
%\begin{equation*}
%        \sum_{w\in R}d_G(w)\le2D
%        \quad  \mbox{and} \quad |R|\le2D=o(m),
%        \label{eq:Rsize}
%\end{equation*}

For every fixed $\gamma>0$, define two good-crossing vertex subsets as follows:
$A_\gamma=\big\{u\in A:d_B(u)\ge(1-\gamma)b\big\}$ and
$B_\gamma=\big\{v\in B:d_A(v)\ge(1-\gamma)a\big\}.$
The number of missing $A$--$B$ edges is at most $D=o(m)$.
Combining this with \eqref{eq:abm} yields
\begin{equation}
 |A\setminus A_{\gamma}|=o(a)  \quad  \mbox{and} \quad
 |B\setminus B_{\gamma}|=o(b)
        \label{eq:goodsize}
\end{equation}
for any fixed constant $\gamma>0$. Notice that no assumption $a,b\to\infty$ is needed in \eqref{eq:goodsize}.
In what follows, we write $A^*=A_{\eta^2}$ and $B^*=B_{\eta^2}$ for simplicity.

\begin{lemma}\label{lem:small}
For every $w\in V(G)$, if $d_B(w)>4\eta^2b$, then
$d_A(w)\le \eta^2a+k-1$.  If in addition $a\to\infty$, then the
symmetric assertion also holds: $d_A(w)>4\eta^2a$ implies
$d_B(w)\le\eta^2b+k-1$.  Consequently, when $a\to\infty$,
$d_A(u)\le2\eta^2a$ for every $u\in A$ and
$d_B(v)\le2\eta^2b$ for every $v\in B$.
\end{lemma}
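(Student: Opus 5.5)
Suppose $d_B(w)>4\eta^2 b$ and, for contradiction, $d_A(w)\ge \eta^2 a+k$. The plan is to embed a copy of $W_{2k+1}=K_1\vee C_{2k}$ with centre $w$. It suffices to find a $C_{2k}$ inside $G[N(w)]$ that alternates between $N_A(w)$ and $N_B(w)$. So we look for $k$ distinct vertices $u_1,\dots,u_k\in N_A(w)$ and $k$ distinct vertices $v_1,\dots,v_k\in N_B(w)$ such that $u_iv_i$ and $v_iu_{i+1}$ are edges for all $i$, with indices mod $k$. Together with $w$ this gives the wheel. (If $w$ itself lies in $A$ or $B$, it is not among the chosen vertices, since $w\notin N(w)$.)

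The vertices are taken from the good sets. First, by \eqref{eq:goodsize}, $|A\setminus A^*|=o(a)$, so $N_A(w)\cap A^*$ has at least $\eta^2 a+k-o(a)$ vertices. To get at least $k$ of them, I would instead use $A_\gamma$ with a small $\gamma$, or simply note that $|A\setminus A_{\gamma}|=o(a)$ beats $\eta^2 a$ when $a\to\infty$. The case of bounded $a$ is where the additive $k-1$ matters: then $\eta^2a$ may be $<1$ and $A\setminus A^*$ could contain everything. Second, each $u\in N_A(w)\cap A^*$ misses at most $\eta^2 b$ vertices of $B$. Hence any $k$ such vertices have at least $d_B(w)-k\eta^2 b>4\eta^2b-k\eta^2 b$ common neighbours in $N_B(w)$. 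This is positive only if $k<4$, so for general $k$ I would count more carefully. Since the cycle only needs consecutive pairs $u_i,u_{i+1}$ to share $v_i$, each pair has at least $d_B(w)-2\eta^2b>2\eta^2 b$ common neighbours in $N_B(w)$. That is $\Omega(b)$, which is $\ge k$ once $b\to\infty$; here $b\to\infty$ holds because $b\ge a$ and $ab\sim m$. We then choose $v_1,\dots,v_k$ greedily and distinct, avoiding $k$ used vertices.

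The main obstacle is the regime where $a$ stays bounded, since the statement assumes no $a\to\infty$ for the first assertion. Then the good set $A^*$ may not be large enough in absolute terms. To handle it I would use the $B$-side instead: every $v\in B$ has $d_A(v)\ge a/2$ by \eqref{eq:minedit}, and most $v\in B$ lie in $B_\gamma$ with $d_A(v)\ge(1-\gamma)a$. Pick $k$ vertices $u_1,\dots,u_k$ in $N_A(w)$; there are $\ge \eta^2a+k\ge k$ of them. Since $a$ is bounded and $|B\setminus B_\gamma|=o(b)$, taking $\gamma<1/a$ makes $B_\gamma$ the set of vertices complete to $A$. Then $N_B(w)\cap B_\gamma$ has size $>4\eta^2 b-o(b)\ge k$. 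Every such $v$ is adjacent to all $u_i$, so any $k$ of them close the alternating cycle.

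With both regimes ($a\to\infty$ and $a$ bounded) handled, either by passing to subsequences or by splitting on $a$ against a threshold, the first assertion follows. The symmetric assertion under $a\to\infty$ is proved by the same pair-common-neighbour argument with the roles of $A$ and $B$ swapped. For the consequence: if $u\in A$ had $d_A(u)>2\eta^2 a>4\eta^2a$… this fails as stated, so I would argue instead from $u\in A$. By \eqref{eq:minedit}, $d_B(u)\ge b/2>4\eta^2 b$, so the first assertion gives $d_A(u)\le\eta^2 a+k-1\le 2\eta^2 a$ for large $a$. Symmetrically, $v\in B$ has $d_A(v)\ge a/2>4\eta^2 a$, and the symmetric assertion gives $d_B(v)\le \eta^2 b+k-1\le2\eta^2 b$.
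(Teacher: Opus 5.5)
Your proof is correct. For $a\to\infty$, for the symmetric assertion, and for the final consequence, it is the paper's argument: take $k$ vertices of $N_A(w)\cap A^*$, note that consecutive ones have more than $d_B(w)-2\eta^2b>2\eta^2b$ common neighbours in $N_B(w)$, and greedily close an alternating $C_{2k}$ in $G[N(w)]$.

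The only divergence is your separate treatment of bounded $a$, via vertices of $B$ complete to $A$. That argument is valid, but it rests on a false worry. Every vertex of $A\setminus A^*$ misses more than $\eta^2 b$ vertices of $B$, so $|A\setminus A^*|<D/(\eta^2 b)$. This is at most $\eta^2 a$ once $D\le\eta^4ab$. When $a$ is bounded it is even $o(1)$, so $A^*=A$ eventually; your remark that $A\setminus A^*$ could contain everything is wrong. Hence $|N_A(w)\cap A^*|>(\eta^2a+k-1)-\eta^2a=k-1$ uniformly in $a$, and the paper needs no case split or subsequence argument.

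One small slip: the negated conclusion is $d_A(w)>\eta^2a+k-1$, not $d_A(w)\ge\eta^2a+k$. This is harmless, since you only use $d_A(w)\ge k$ and $d_A(w)>\eta^2a$.
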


\begin{proof}
Suppose that $d_B(w)>4\eta^2b$ and
$d_A(w)>\eta^2a+k-1$.  Since $|A\setminus A^*|=o(a)$ by \eqref{eq:goodsize}, for all
sufficiently large $m$ the set $N_A(w)\cap A^*$ contains $k$ distinct
vertices $v_1,\ldots,v_k$, with indices read modulo $k$.  Each $v_i$ misses
at most $\eta^2b$ vertices of $B$, and hence
\[
 |N_B(w)\cap N_B(v_i)\cap N_B(v_{i+1})|
 \ge d_B(w)-2\eta^2b>2\eta^2b.
\]
Since $\eta$ is constant and $b\to\infty$ by \eqref{eq:abm}, we may choose distinct
$u_i\in N_B(w)\cap N_B(v_i)\cap N_B(v_{i+1})$.  Then,
$v_1u_1v_2u_2\cdots v_ku_kv_1$
forms a $2k$-cycle contained in $G[N_G(w)]$.
This gives a copy of $W_{2k+1}$ centered at
$w$, a contradiction. This proves the first assertion.

If $a\to\infty$, the same argument with the roles of $A$ and $B$ reversed
proves the symmetric assertion.  Finally, if $u\in A$, then
$d_B(u)\ge b/2>4\eta^2b$ by \eqref{eq:minedit}; hence $d_A(u)\le\eta^2a+k-1\le2\eta^2a$ for
large $m$.  The assertion for vertices of $B$ follows symmetrically.
\end{proof}

Since the $\varepsilon_0$-core $G$ is connected, let $\bm x=(x_v)$ denote its Perron vector.

\begin{lemma}\label{lem:perron-mass}
There is a constant $C>0$ such that, for all sufficiently large $m$,
\[
 x_u^2\ge\big(1-C\eta^2\big)\frac{b}{2m}~~(u\in A^*)
 \quad \mbox{and}\quad
 x_v^2\ge\big(1-C\eta^2\big)\frac{a}{2m}~~(v\in B^*).
\]
Consequently, both
$\sum_{u\in A^*}x_u^2$ and $\sum_{v\in B^*}x_v^2$ are bounded below by $\frac12\big(1-C\eta^2-o(1)\big).$
In particular,
for all $S\subseteq A$ and $T\subseteq B$,
\[
 \sum_{u\in S}x_u^2\le\frac{|S|}{2a}+C\eta^2+o(1)
\quad \mbox{and}\quad
 \sum_{v\in T}x_v^2\le\frac{|T|}{2b}+C\eta^2+o(1).
\]
\end{lemma}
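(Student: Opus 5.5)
The plan is to obtain the lower bound on $B^*$ first, from Lemma~\ref{lem:core}, and then transfer it to $A^*$ through the eigen-equation. Proving the $A^*$ bound directly from Lemma~\ref{lem:core}(ii) could fail: if $a$ is bounded, vertices of $A$ have degree about $b\approx m/a$, which need not be below $\eps_0 m$. Throughout we use $\rho^2=(1+o(1))m$, $ab=(1+o(1))m$ and $a\le b$. The last two give $a\le(1+o(1))\sqrt m$, hence $a=o(m)$.

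\textbf{Step 1 (vertices of $B^*$).} Let $v\in B^*$, so $d_G(v)\ge d_A(v)\ge(1-\eta^2)a$.
\begin{itemize}
\item If $d_G(v)<\eps_0 m$, Lemma~\ref{lem:core}(ii) gives $2m x_v^2\ge(1-2\eps_0)(1-\eta^2)a$.
\item If $d_G(v)\ge\eps_0 m$, use Lemma~\ref{lem:core}(i) on every edge at $v$:
\[
\rho x_v^2=\sum_{w\in N(v)}x_wx_v>d_G(v)\,\frac{1-\eps_0}{4\sqrt m}.
\]
This gives $x_v^2\ge c\,\eps_0$ for some absolute constant $c>0$, which is far larger than $a/(2m)=O(m^{-1/2})$.
\end{itemize}
In both cases $x_v^2\ge(1-C\eta^2)\,a/(2m)$, since $\eps_0\ll\eta^2$.

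\textbf{Step 2 (vertices of $A^*$).} Let $u\in A^*$, so $|N_B(u)\cap B^*|\ge(1-\eta^2)b-|B\setminus B^*|=(1-\eta^2-o(1))b$ by \eqref{eq:goodsize}. Summing Step 1 over these neighbours gives
\[
\rho x_u\ \ge\sum_{v\in N_B(u)\cap B^*}x_v\ \ge\ (1-\eta^2-o(1))\,b\,\sqrt{(1-C\eta^2)\frac{a}{2m}}.
\]
Squaring and dividing by $\rho^2=(1+o(1))m$ yields $x_u^2\ge(1-C'\eta^2-o(1))\,ab^2/(2m^2)$. Using \eqref{eq:abm} this equals $(1-C'\eta^2-o(1))\,b/(2m)$. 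The $o(1)$ is absorbed into $\eta^2$ for large $m$, after enlarging $C$.

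\textbf{Step 3 (consequences).} Summing over $A^*$ with $|A^*|=(1-o(1))a$ from \eqref{eq:goodsize} gives
\[
\sum_{u\in A^*}x_u^2\ge(1-C\eta^2-o(1))\,\frac{ab}{2m}=\tfrac12(1-C\eta^2-o(1)),
\]
and the bound for $B^*$ follows in the same way.

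For $S\subseteq A$, use $\|\boldsymbol x\|=1$ and discard all mass except that on $B^*$ and on $A^*\setminus S$:
\[
\sum_{u\in S}x_u^2\le 1-\sum_{v\in B^*}x_v^2-\sum_{u\in A^*\setminus S}x_u^2 .
\]
Here $|A^*\setminus S|\ge a-|S|-o(a)$. The first sum is at least $\tfrac12(1-C\eta^2-o(1))$, and the second is at least $(a-|S|-o(a))(1-C\eta^2)\,b/(2m)$. Together these give $\sum_{u\in S}x_u^2\le |S|/(2a)+C\eta^2+o(1)$, after adjusting $C$. The statement for $T\subseteq B$ is symmetric: it uses the $A^*$ mass together with the lower bound on $B^*\setminus T$.

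\textbf{Main obstacle.} The delicate point is the large-degree case in Step 1 together with the possibility that $a$ is bounded. That is exactly why the argument starts from the smaller side $B^*$, whose vertices have typical degree $\approx a=o(m)$, and covers any exceptional high-degree vertex with the edge bound of Lemma~\ref{lem:core}(i).
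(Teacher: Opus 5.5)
Your proposal is correct and follows the paper's proof essentially step for step. You get the lower bound on $B^*$ from Lemma~\ref{lem:core}(ii), transfer it to $A^*$ through the eigen-equation summed over $N_B(u)\cap B^*$, and finish with the same complementary-mass argument for $S\subseteq A$ and $T\subseteq B$. The only difference is your high-degree case in Step~1, which is valid but vacuous: every neighbour of $v\in B$ outside $A$ is an edit error, so $d_G(v)\le a+D=o(m)<\varepsilon_0 m$, and the paper uses exactly this to apply Lemma~\ref{lem:core}(ii) directly.
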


\begin{proof}
Take $v\in B^*$. Then $d_A(v)\ge(1-\eta^2)a$.
Since $a^2\le ab=(1+o(1))m$, it follows that
$a=O(\sqrt m)=o(m)$.  Recall also from \eqref{eq:parameter-hierarchy} that $\varepsilon_0\ll \eta^2\ll1$.
Every neighbor of $v$ outside $A$ is incident to
an edit-error edge, so $d_G(v)\le a+D<\varepsilon_0m$ for large $m$.
Consequently, conclusion (ii) of Lemma \ref{lem:core} gives
\begin{equation}\label{lower1}
 x_v^2\ge\big(1-2\varepsilon_0\big)\frac{d_G(v)}{2m}\geq\big(1-2\varepsilon_0\big)\big(1-\eta^2\big)\frac{a}{2m}
 \ge\big(1-3\eta^2\big)\frac{a}{2m}.
\end{equation}

Now take $u\in A^*$. Then $d_B(u)\ge(1-\eta^2)b$.
Since $|B\setminus B^*|=o(b)$ by \eqref{eq:goodsize},
we have $d_{B^*}(u)\ge(1-2\eta^2)b$.
Applying the lower bound from \eqref{lower1} to each $v\in N_{B^*}(u)$ and invoking the eigen-equation at $u$,
we arrive at
$$\rho x_u\geq\!\!\sum_{v\in N_{B^*}(u)}\!\!x_v\ge\big(1-2\eta^2\big)b\sqrt{\big(1-3\eta^2\big)\frac{a}{2m}}.$$
Squaring and using \eqref{eq:abm} together with $\rho^2=(1+o(1))m$, we further obtain
\begin{equation}\label{lower2}
x_u^2\ge\big(1-C\eta^2\big)\frac{b}{2m}
\end{equation}
for some constant $C\geq3$.
Combining \eqref{lower1} and \eqref{lower2} establishes the first assertion.

Summing \eqref{lower1} over $B^*$ and \eqref{lower2} over $A^*$, and using the estimates
$|A^*|=(1-o(1))a$, $|B^*|=(1-o(1))b$ together with $ab=(1+o(1))m$, we obtain
$$\min\Big\{\sum_{u\in A^*}\!x_u^2,\sum_{v\in B^*}\!x_v^2\Big\}
\geq \big(1-C\eta^2\big)\big(1-o(1)\big)\frac{ab}{2m}
\geq\frac12\big(1-C\eta^2-o(1)\big).$$

Finally, choose $S\subseteq A$ arbitrarily.
Since $\sum_{w\in V(G)}x_w^2=1$, we have
\begin{equation}\label{lower4}
\sum_{u\in A}x_u^2\le1-\sum_{v\in B^*}x_v^2
\leq\frac12\big(1+C\eta^2+o(1)\big).
\end{equation}

Set $\lambda_{A^*}=(1-C\eta^2)b\big/(2m)$.
Recall $ab=(1+o(1))m$.
Then $\lambda_{A^*}=\frac1{2a}\big(1-C\eta^2+o(1)\big),$
and hence $\lambda_{A^*}|S|\leq|S|/(2a)+o(1)$.
Moreover, recall from \eqref{eq:goodsize} that $|A^*|=(1-o(1))a$,
which implies
$\lambda_{A^*}|A^*|=\frac12\big(1-C\eta^2-o(1)\big)$.
The lower bound \eqref{lower2} then yields
\begin{equation}\label{lower5}
\sum_{u\in A^*\setminus S}x_u^2\geq\lambda_{A^*}|A^*\setminus S|\geq\frac12\big(1-C\eta^2-o(1)\big)-\frac{|S|}{2a}.
\end{equation}

Combining \eqref{lower4} and \eqref{lower5}, we obtain
$$\sum_{u\in S}x_u^2=\sum_{u\in A}x_u^2-\sum_{u\in A\setminus S}\!\!x_u^2
\leq\sum_{u\in A}x_u^2-\sum_{u\in A^*\setminus S}\!\!x_u^2\leq\frac{|S|}{2a}+C\eta^2+o(1).$$
The proof for $T\subseteq B$ is identical. This proves the lemma.
\end{proof}

\begin{lemma}\label{lem:R-empty}
The minimum-edit partition $R$ is empty.
\end{lemma}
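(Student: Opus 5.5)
We argue by contradiction. Suppose $w\in R$, and compare two estimates for $\rho^2x_w^2$. The lower bound will come from the core property, Lemma~\ref{lem:core}(ii). The upper bound will come from the eigen-equation at $w$, Cauchy--Schwarz, and the mass bounds of Lemma~\ref{lem:perron-mass}.

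\emph{Lower bound.} Every edge incident with $w$ lies in $E(G)\setminus E(K_{A,B})$, so $d_G(w)\le D=o(m)<\varepsilon_0 m$. Lemma~\ref{lem:core}(ii) then gives
$x_w^2\ge(1-2\varepsilon_0)d_G(w)/(2m)$. Together with $\rho^2=(1+o(1))m$ this yields
$$\rho^2x_w^2\ge\bigl(\tfrac12-2\varepsilon_0-o(1)\bigr)d_G(w).$$

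\emph{Upper bound.} Write
$$\rho x_w=\sum_{u\in N_A(w)}x_u+\sum_{v\in N_B(w)}x_v+\sum_{z\in N_R(w)}x_z$$
and bound each sum by Cauchy--Schwarz. The $R$-part is small: Lemma~\ref{lem:perron-mass} gives $\sum_{A^*}x^2,\sum_{B^*}x^2\ge\frac12(1-C\eta^2-o(1))$, hence $\sum_{z\in R}x_z^2\le 2C\eta^2+o(1)$. For the $A$- and $B$-parts, Lemma~\ref{lem:perron-mass} with $S=N_A(w)$, $T=N_B(w)$ gives
$\sum_{N_B(w)}x_v^2\le d_B(w)/(2b)+C\eta^2+o(1)$, and \eqref{eq:minedit} gives $d_B(w)<b/2$, so this is at most $\frac14+C\eta^2+o(1)$; similarly on the $A$-side.

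\emph{Using $W_{2k+1}$-freeness.} Bounding both sides by $\frac14$ is not enough: after combining via $(\sqrt{d_A}+\sqrt{d_B})^2\le2(d_A+d_B)$ it only gives $\rho^2x_w^2\lesssim\frac12 d_G(w)$, which is exactly borderline. Lemma~\ref{lem:small} breaks the tie. If $d_B(w)>4\eta^2 b$, then $d_A(w)\le\eta^2a+k-1$. Applying Lemma~\ref{lem:perron-mass} to $S=N_A(w)$ then gives $\sum_{N_A(w)}x_u^2=O(\eta^2)+o(1)$. The symmetric statement handles $A$-heavy vertices when $a\to\infty$. If both $d_A(w)\le4\eta^2a$ and $d_B(w)\le4\eta^2b$, then both masses are $O(\eta^2)$. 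In every case at most one of the three neighbourhood masses exceeds $O(\eta^2)+o(1)$, and that one is at most $\frac14+O(\eta^2)$. Cauchy--Schwarz then gives
$$\rho^2x_w^2\le\Bigl(\tfrac12\sqrt{d_1}\,(1+O(\eta))+O(\eta)\sqrt{d_G(w)}\Bigr)^2\le\bigl(\tfrac14+O(\eta)\bigr)d_G(w),$$
where $d_1$ is the degree into the heavy side. Since $\varepsilon_0\ll\eta\ll1$, this contradicts the lower bound $(\frac12-2\varepsilon_0-o(1))d_G(w)$. Hence $R=\emptyset$.

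The step I expect to be the main obstacle is the case where $a$ stays bounded, since the symmetric half of Lemma~\ref{lem:small} is then unavailable. Here I would instead use $d_A(w)<a/2=O(1)$ together with a crude bound on $x_u$ for $u\in A$. That crude bound comes from Lemma~\ref{lem:perron-mass} with $S=\{u\}$, giving $x_u^2\le\frac1{2a}+C\eta^2+o(1)$. The $A$-contribution is then an additive $O(1)$ term against $\rho x_w$. I would absorb it using either $d_B(w)\to\infty$ or, if $d_G(w)$ is bounded, the edge bound of Lemma~\ref{lem:core}(i) $x_wx_u>(1-\varepsilon_0)/(4\sqrt m)$ applied to an edge at $w$. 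I would plan to check that this bookkeeping closes; if it does not, I would refine the estimate on $\sum_{N_A(w)}x_u$ in this regime.
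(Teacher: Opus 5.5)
Your route is correct in outline and differs from the paper's in where Cauchy--Schwarz is applied. The paper applies it once to the whole neighbourhood, giving $\sum_{N(w_0)}x^2\ge(1-2\varepsilon_0)\rho^2/(2m)\ge(1-2\varepsilon_0)/2$. It must therefore push the \emph{total} neighbourhood mass strictly below $\frac12$:
\begin{itemize}
\item if $d_B(w_0)\le4\eta^2b$, it gets $\frac14+O(\eta^2)+o(1)$;
\item if $d_B(w_0)>4\eta^2b$, it uses $d_A(w_0)\le\eta^2a+k-1$ when $a\ge2k$, and integrality of $d_A(w_0)<a/2$ when $a<2k$, to get at most $\frac12-\frac1{8k}+O(\eta^2)+o(1)$.
\end{itemize}
That argument is uniform in $a$, but its margin is only of order $1/k$, which is why $\eta\ll1/k$ appears. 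You apply Cauchy--Schwarz separately on $N_A(w)$, $N_B(w)$ and $N_R(w)$. So you only need all but one of the three masses to be $O(\eta^2)$, and you win with margin $\frac14$, without needing $\eta\ll1/k$. The price is that the light side must really be light, and your write-up does not yet secure this.

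Your claim that $d_B(w)>4\eta^2b$ gives $\sum_{N_A(w)}x_u^2=O(\eta^2)+o(1)$ needs $(k-1)/(2a)$ to be small, i.e.\ $a\gg k/\eta^2$. For moderate $a$ this mass can be close to $\frac14$. So the obstruction lies in this half, not in the symmetric half of Lemma~\ref{lem:small}, which you never need. Indeed, if $d_B(w)\le4\eta^2b$, Lemma~\ref{lem:perron-mass} with $T=N_B(w)$ already gives $\sum_{N_B(w)}x_v^2\le2\eta^2+C\eta^2+o(1)$ for every value of $a$.

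In the remaining case your first fallback, absorbing the $A$-term against $d_B(w)$, closes the argument uniformly. No subsequences and no appeal to Lemma~\ref{lem:core}(i) are needed. From $d_A(w)\le\eta^2a+k-1$ and $d_B(w)>4\eta^2b\ge4\eta^2a$ you get
$d_A(w)\sum_{N_A(w)}x_u^2\le\frac{d_A(w)^2}{2a}+(C\eta^2+o(1))\,d_A(w)\le O(\eta^2)\,d_B(w)+O(k^2)$.
Since $d_B(w)>4\eta^2b\to\infty$, the $A$-contribution to $\rho x_w$ is $O(\eta)\sqrt{d_G(w)}+o(\sqrt{d_G(w)})$. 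This gives $\rho^2x_w^2\le(\frac14+O(\eta)+o(1))\,d_G(w)$, as you wanted. In this case $d_G(w)\ge d_B(w)\to\infty$, so your ``$d_G(w)$ bounded'' branch never occurs and should be dropped.
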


\begin{proof}
Suppose that $w_0\in R$.  Since every edge incident to $w_0$ is an edit error,
$d_G(w_0)\le D=o(m)$, so conclusion (ii) of Lemma \ref{lem:core} applies.
Thus, $x_{w_0}^2\ge(1-2\varepsilon_0)d_G(w_0)/2m.$
By the Cauchy--Schwarz inequality and then the eigen-equation at $w_0$,
we obtain
$$\sum_{w\in N(w_0)}x_w^2\geq \frac1{d_G(w_0)}\Big(\sum_{w\in N(w_0)}\!\!x_w\Big)^2=\frac{\rho^2x_{w_0}^2}{d_G(w_0)}
\geq\big(1-2\varepsilon_0\big)\frac{\rho^2}{2m}.$$
Recall from \eqref{eq:core-rhosq} that $\rho^2>m$.
It follows that $\sum_{w\in N(w_0)}x_w^2\geq(1-2\varepsilon_0)/2.$

We now derive a smaller upper bound.  First suppose $d_B(w_0)\le4\eta^2b$.
Note that $\sum_{w\in V(G)}x_w^2=1$.
By Lemma \ref{lem:perron-mass},
\begin{equation}\label{lower3}
\sum_{w\in R}x_w^2\le1-\big(1-C\eta^2-o(1)\big)=C\eta^2+o(1).
\end{equation}
Furthermore, choose $S=N_A(w_0)$ and $T=N_B(w_0)$.
Again by Lemma \ref{lem:perron-mass}, combined with $d_A(w_0)<a/2$ from \eqref{eq:minedit}
and the assumption $d_B(w_0)\le4\eta^2b$,
we obtain
\[
 \sum_{u\in N_A(w_0)}\!\!x_u^2+\!\!\sum_{v\in N_B(w_0)}\!\!x_v^2
 \le \frac{d_A(w_0)}{2a}+\frac{d_B(w_0)}{2b}+2C\eta^2+o(1)\leq\frac 14+O(\eta^2)+o(1).
\]
Combining this with \eqref{lower3} yields
$\sum_{w\in N(w_0)}x_w^2\le\frac14+O(\eta^2)+o(1)$,
contradicting the earlier lower bound for $\sum_{w\in N(w_0)}x_w^2$.
Hence, $d_B(w_0)>4\eta^2b$.

Since $d_B(w_0)>4\eta^2b$, Lemma \ref{lem:small} implies
$d_A(w_0)\le\eta^2a+k-1$.  If $a\ge2k$, then
$d_A(w_0)/(2a)\le\eta^2/2 +1/4-1/(4k)$, and
$d_B(w_0)/(2b)<1/4$ by \eqref{eq:minedit}.
Combining this with \eqref{lower3}, we have
\[
 \sum_{w\in N(w_0)}x_w^2
 \le\frac12-\frac1{4k}+O(\eta^2)+o(1),
\]
which again contradicts $\sum_{w\in N(w_0)}x_w^2\geq(1-2\varepsilon_0)/2.$

It remains only to consider $a<2k$.  Since $d_A(w_0)<a/2$ by \eqref{eq:minedit}, and $a, d_A(w_0)$ are
integers, we have $d_A(w_0)/(2a)\le1/4-1/(4a)\le1/4-1/(8k)$.
Applying the same subset estimate yields
\[
 \sum_{w\in N(w_0)}x_w^2
 \le\frac12-\frac1{8k}+O(\eta^2)+o(1),
\]
which is also strictly smaller than $(1-2\varepsilon_0)/2$.
Therefore, no vertex in $R$ exists.
\end{proof}

From Lemma \ref{lem:R-empty}, we know that $V(G)=A\cup B$.  Define
$$x_A^*=\max_{u\in A}x_u, ~~x_B^*=\max_{v\in B}x_v, ~~
x^*=\max\{x_A^*,x_B^*\}.$$
Choose $w\in V(G)$ arbitrarily.
Applying the two-step eigen-equation gives
$\rho^2x_{w}=\sum_{u\in N(w)}\rho x_u=\sum_{u\in N(w)}\sum_{v\in N(u)}x_v.$
Observe that every edge other than an $A$--$B$ edge belongs to the edit-error set and is counted at most twice in the degree sums above.
Recall also from \eqref{eq:D} that $D=o(m)$.
Combining these observations, we arrive at
\begin{equation}\label{ooooo}
\rho^2x_{w}\leq \!\!\!\sum_{u\in N_A(w)}\!\!\!\!d_B(u)x^*_B+\!\!\!\sum_{u\in N_B(w)}\!\!\!\!d_A(u)x^*_A+2Dx^*
\leq d_A(w)bx^*_B+d_B(w)ax^*_A+o(m)x^*
\end{equation}
for every $w\in V(G)$. Denote $\alpha_w:=d_A(w)/a$ and $\beta_w:=d_B(w)/b.$
Then we have
\begin{equation}\label{29}
\rho^2x_w\leq ab\,\big(\alpha_w x^*_B+\beta_w x^*_A\big)+o(m)x^*.
\end{equation}
We shall use \eqref{ooooo} and \eqref{29} repeatedly.

\begin{lemma}\label{lem:partwise-perron-cap}
Suppose that $a\to\infty$.  Then
\begin{equation}\label{eq:partwise-perron-cap}
\begin{aligned}
 x_A^*&\le\big(1+O(\eta^2)+o(1)\big)\sqrt{\frac{b}{2m}}
          +\big(O(\eta^2)+o(1)\big)x^*,\\
 x_B^*&\le\big(1+O(\eta^2)+o(1)\big)\sqrt{\frac{a}{2m}}
          +\big(O(\eta^2)+o(1)\big)x^*.
\end{aligned}
\end{equation}
\end{lemma}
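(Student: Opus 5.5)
We prove the bound for $x_A^*$; the bound for $x_B^*$ follows by exchanging the roles of $A$ and $B$, which is legitimate because $a\to\infty$ and hence also $b\to\infty$. Let $u\in A$ attain $x_u=x_A^*$. Rather than applying \eqref{29} at $u$, which would give $x_A^*$ in terms of $x^*$ with no gain, we use the one-step eigen-equation $\rho x_u=\sum_{v\in N(u)}x_v$ and Cauchy--Schwarz, so that the Perron-mass bounds of Lemma~\ref{lem:perron-mass} can be used.

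Split $N(u)=N_B(u)\cup N_A(u)$.

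\textbf{The $B$-part.} By Cauchy--Schwarz and Lemma~\ref{lem:perron-mass} with $T=N_B(u)$,
\[
\sum_{v\in N_B(u)}x_v\le\sqrt{d_B(u)\sum_{v\in N_B(u)}x_v^2}\le\sqrt{b\Big(\tfrac12+C\eta^2+o(1)\Big)}.
\]

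\textbf{The $A$-part.} By Lemma~\ref{lem:small}, which requires $a\to\infty$, we have $d_A(u)\le 2\eta^2a$. Each term is at most $x^*$, so this part is at most $2\eta^2 a\,x^*$. After dividing by $\rho=(1+o(1))\sqrt m$, it contributes $2\eta^2 a x^*/\sqrt m$. This is where the argument is delicate: $a/\sqrt m$ need not be small, since $a\le b$ only gives $a\le(1+o(1))\sqrt m$. So the contribution is at most $(2\eta^2+o(1))x^*$, which is of the allowed form $(O(\eta^2)+o(1))x^*$.

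\textbf{Combining.} Dividing by $\rho\ge\sqrt m$ gives
\[
x_A^*\le\sqrt{\frac{b}{2m}}\,\big(1+O(\eta^2)+o(1)\big)+\big(O(\eta^2)+o(1)\big)x^*,
\]
which is the first line of \eqref{eq:partwise-perron-cap}.

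\textbf{The symmetric bound.} Take $v\in B$ attaining $x_B^*$ and repeat the argument with Lemma~\ref{lem:perron-mass} applied to $S=N_A(v)$ and Lemma~\ref{lem:small} applied to $d_B(v)\le 2\eta^2 b$. The error term is now $2\eta^2 b\,x^*/\sqrt m$, and here $b$ could exceed $\sqrt m$ considerably. This is the main obstacle.

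To handle it, bound $\sum_{w\in N_B(v)}x_w^2$ instead of $\sum_{w\in N_B(v)}x_w$, by applying Cauchy--Schwarz to the whole neighborhood:
\[
\rho x_v\le\sqrt{d(v)\sum_{w\in N(v)}x_w^2}.
\]
We have $d(v)\le a+2\eta^2 b$. The mass estimate gives
\[
\sum_{w\in N(v)}x_w^2\le \frac12+\frac{d_B(v)}{2b}+O(\eta^2)+o(1)=\frac12+O(\eta^2)+o(1).
\]
Hence
\[
x_B^*\le\sqrt{\frac{(a+2\eta^2 b)\big(\tfrac12+O(\eta^2)\big)}{m}}.
\]
If $b=O(a)$, this is $\big(1+O(\eta^2)\big)\sqrt{a/(2m)}$. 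Otherwise, $2\eta^2 b/m$ is controlled through $(x^*)^2\ge b/(2m)\cdot(1-C\eta^2)$, which follows from Lemma~\ref{lem:perron-mass} applied on $A^*$. This gives $\sqrt{\eta^2 b/m}=O(\eta)\,x^*$. This last estimate is the step I expect to need refinement to reach $O(\eta^2)$. I would try to sharpen it by using $\sum_{w\in N_B(v)}x_w^2\le d_B(v)(x_B^*)^2$ together with a bootstrap on $x_B^*$ itself.
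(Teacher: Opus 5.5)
Your bound for $x_A^*$ is correct, and it takes a more direct route than the paper. You apply Cauchy--Schwarz and the subset mass bound of Lemma~\ref{lem:perron-mass} to $N_B(u)$ at the maximizer itself, and you absorb $N_A(u)$ using $d_A(u)\le 2\eta^2a$ and $a\le(1+o(1))\sqrt m$. The paper instead compares the maximizer with a minimum-coordinate vertex $u_0\in A^*$, whose coordinate it bounds by averaging the mass on $A$, and then controls $x_{u^*}-x_{u_0}$ by a two-step walk count. Note that $a\le(1+o(1))\sqrt m$ relies on $a\le b$. So your opening claim, that the $B$-bound follows by exchanging $A$ and $B$, is not valid, as you later notice.

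The genuine gap is the second inequality. When $b/a$ is unbounded, your whole-neighbourhood Cauchy--Schwarz gives only $x_B^*\le(1+O(\eta^2)+o(1))\sqrt{a/(2m)}+O(\eta)\,x^*$. The proposed bootstrap cannot repair this. Bounding the internal terms by $d_B(v)\,x_B^*$ (or by $d_B(v)(x_B^*)^2$) produces a self-referential coefficient of order $\eta^2\sqrt{b/a}$ (respectively its square), which need not be less than $1$. The missing idea is to apply the eigen-equation a second time to the internal neighbours, so that the error is counted in walks of length two rather than in vertices:
\[
\rho\sum_{w\in N_B(v)}x_w=\sum_{w\in N_B(v)}\sum_{z\in N(w)}x_z\le\Big(d_B(v)\,a+2e(B)\Big)x^*\le\big(2\eta^2ab+2D\big)x^*.
\]
This holds because each $w\in B$ has at most $a$ neighbours in $A$, every edge inside $B$ is an edit error (so $e(B)\le D$), and $d_B(v)\le 2\eta^2b$ by Lemma~\ref{lem:small}. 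Divide by $\rho^2>m$ and use $ab=(1+o(1))m$ and $D=o(m)$; this part then contributes $(2\eta^2+o(1))x^*$. Your Cauchy--Schwarz estimate on $N_A(v)$ still supplies the main term $(1+O(\eta^2)+o(1))\sqrt{a/(2m)}$, which completes the proof along your route. This two-step count puts every error on the scale $\eta^2ab\approx\eta^2\rho^2$ regardless of the ratio $b/a$. It is exactly the mechanism of the paper's proof, where the degree sum over $N(u^*)\setminus N(u_0)$ is bounded by $3\eta^2ab+2D$.
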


\begin{proof}
Since $a\to\infty$, the assumption $a\le b$ and \eqref{eq:abm} imply $a+b=o(m)$.
Lemma~\ref{lem:small} gives $d_A(u)\le2\eta^2a$ for $u\in A$ and
$d_B(v)\le2\eta^2b$ for $v\in B$.

Choose $u_0\in A^*$ to be a vertex attaining the minimum Perron coordinate over  $A^*$.
Then $x_{u_0}^2\leqq\frac1{|A^*|}\sum_{u\in A^*}x_{u}^2\leq\frac1{|A^*|}\sum_{u\in A}x_{u}^2.$
Recall from \eqref{eq:abm} that $ab=(1+o(1))m$,
and from \eqref{eq:goodsize} that $|A^*|=(1-o(1))a$.
Combining this with \eqref{lower4} yields
\begin{equation}\label{ggg}
 x_{u_0}\le\big(1+O(\eta^2)+o(1)\big)\sqrt{\frac{b}{2m}}.
\end{equation}
Choose $u^*\in A$ such that $x_{u^*}=x^*_A$.
Subtracting the eigen-equations at $u^*$ and $u_0$ gives
$\rho(x_{u^*}-x_{u_0}) \le\sum_{w\in N(u^*)\setminus N(u_0)}x_w$.
Applying the eigen-equation once more yields
\[
 \rho^2(x_{u^*}-x_{u_0})\leq\sum_{w\in N(u^*)\setminus N(u_0)}\!\!\rho x_w
 \le\sum_{w\in N(u^*)\setminus N(u_0)}\!\! d_G(w)x^*.
\]
We now estimate the degree sum $\sum_{w\in N(u^*)\setminus N(u_0)}d_G(w)$. Its regular $A$--$B$
incidences are at most
$d_A(u^*)\,b+|N_B(u^*)\setminus N_B(u_0)|\,a$.
Since \(u_0\in A^*\), we have \(d_B(u_0)\ge(1-\eta^2)b\).
The vertices in \(N_B(u^*)\setminus N_B(u_0)\) belong to the set of vertices in $B$ not adjacent to \(u_0\),
whose size is thus at most \(\eta^2 b\).
Meanwhile, Lemma \ref{lem:small} yields \(d_A(u^*)\le 2\eta^2 a\).
Consequently,
$d_A(u^*)\,b+|N_B(u^*)\setminus N_B(u_0)|\,a\leq3\eta^2ab$.
On the other hand, every remaining incidence comes from an edit-error edge, and each such edge is counted at
most twice. Therefore,
$\rho^2(x_{u^*}-x_{u_0})\le(3\eta^2ab+2D)x^*.$
Combining $\rho^2>m$, $ab=\big(1+o(1)\big)m$, and $D=o(m)$
with \eqref{ggg}, we obtain the first inequality in
\eqref{eq:partwise-perron-cap}.
The second follows by symmetry.
\end{proof}

\begin{lemma}\label{cor:bounded-ratio-perron-cap}
Suppose that $1\le b/a<100$.  Then
\begin{equation}\label{eq:balanced-cap}
 x_A^*\le\big(1+O(\eta^2)+o(1)\big)\sqrt{\frac{b}{2m}}
 ~~ \mbox{and} ~~
 x_B^*\le\big(1+O(\eta^2)+o(1)\big)\sqrt{\frac{a}{2m}}.
\end{equation}
Moreover,
$d_B(u)>(1-\eta)b$ for each $u\in A$ and
$d_A(v)>(1-\eta)a$ for each $v\in B$.
\end{lemma}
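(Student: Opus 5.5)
Since $1\le b/a<100$ and $ab=(1+o(1))m$ by \eqref{eq:abm}, we have $a\ge\sqrt{ab/100}=\Omega(\sqrt m)$. So $a\to\infty$, and Lemma~\ref{lem:partwise-perron-cap} and Lemma~\ref{lem:small} are both available. The plan is first to remove the self-referential error terms $(O(\eta^2)+o(1))x^*$ from \eqref{eq:partwise-perron-cap}, and then to compare the upper bound \eqref{29} with the dense-core lower bound of Lemma~\ref{lem:core}(ii) to obtain the degree statement.

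\emph{Step 1: the caps.} Because $a\le b$, we have $\sqrt{a/(2m)}\le\sqrt{b/(2m)}$. Taking the maximum of the two inequalities in \eqref{eq:partwise-perron-cap} gives
\[
x^*\le\big(1+O(\eta^2)+o(1)\big)\sqrt{\tfrac{b}{2m}}+\big(O(\eta^2)+o(1)\big)x^*.
\]
Since $\eta\ll1$, the last term can be absorbed, and we get $x^*\le(1+O(\eta^2)+o(1))\sqrt{b/(2m)}$. Substituting this back into the first line of \eqref{eq:partwise-perron-cap} gives the bound on $x_A^*$ directly. For $x_B^*$, the error term is $(O(\eta^2)+o(1))\sqrt{b/(2m)}$, and since $b/a<100$ this is at most $10\,(O(\eta^2)+o(1))\sqrt{a/(2m)}$. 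Hence it is absorbed into $(1+O(\eta^2)+o(1))\sqrt{a/(2m)}$. This is the only place where the bounded ratio is used, and it is only a constant-factor loss.

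\emph{Step 2: the degree bound.} Take $u\in A$ and write $\beta_u=d_B(u)/b$, so that $\beta_u\ge 1/2$ by \eqref{eq:minedit}. Suppose, for a contradiction, that $\beta_u\le1-\eta$.

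For the upper bound, apply \eqref{29} at $u$ with $\alpha_u\le2\eta^2$ (Lemma~\ref{lem:small}), the caps from Step 1, $\rho^2>m$, and $x^*=O(\sqrt{b/m})$. This yields
\[
x_u\le\big(\beta_u+O(\eta^2)+o(1)\big)\sqrt{\tfrac{b}{2m}}.
\]
Here $ab\cdot 2\eta^2\cdot\sqrt{a/(2m)}$ is $O(\eta^2)\sqrt{b/(2m)}$ after using $ab\approx m$, and $o(m)x^*/\rho^2$ is $o(\sqrt{b/m})$.

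For the lower bound, note that $d_G(u)\le b+D<\varepsilon_0 m$, so Lemma~\ref{lem:core}(ii) applies and gives $x_u^2\ge(1-2\varepsilon_0)d_G(u)/(2m)\ge(1-2\varepsilon_0)\beta_u\, b/(2m)$.

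Comparing the two bounds gives $(1-2\varepsilon_0)\beta_u\le\beta_u^2+O(\eta^2)+o(1)$, that is, $\beta_u(1-\beta_u)\le 2\varepsilon_0+O(\eta^2)+o(1)$. But $\beta_u\in[1/2,1-\eta]$ forces $\beta_u(1-\beta_u)\ge\eta/2$. Since $\varepsilon_0\ll\eta^2\ll\eta$, this is a contradiction. The argument for $v\in B$ is symmetric: it uses the $x_B^*$ and $x_A^*$ caps with the roles of $a$ and $b$ swapped, where the bounded ratio again lets $\eta^2$-errors measured in $\sqrt{b/m}$ be expressed in units of $\sqrt{a/m}$.

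\emph{Main obstacle.} The delicate point is bookkeeping in the upper bound of Step 2. All error contributions must be shown to be $O(\eta^2)$ relative to the main term and not $O(\eta)$, since the contradiction needs a gap of order $\eta$ against errors of order $\eta^2+\varepsilon_0$. In particular, the term $d_A(u)\,b\,x_B^*$ must be controlled using $d_A(u)\le2\eta^2a$ together with the sharp cap on $x_B^*$.
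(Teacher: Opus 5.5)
Your proposal is correct and follows the paper's proof. You absorb the $x^*$ error terms of Lemma~\ref{lem:partwise-perron-cap}, taking the maximum of the two inequalities where the paper splits on which part attains $x^*$, and then play the upper bound from \eqref{29} against the dense-core lower bound of Lemma~\ref{lem:core}(ii); your squared comparison $\beta_u(1-\beta_u)\le 2\varepsilon_0+O(\eta^2)+o(1)$ is just a repackaging of the paper's use of the monotonicity of $\sqrt{t}-t$ on $[1/2,1]$.
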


\begin{proof}
We first prove \eqref{eq:balanced-cap}.
Recall that $x^*=\max\{x_A^*,x_B^*\}$,
and apply Lemma \ref{lem:partwise-perron-cap}. If $x^*=x_A^*$, the first
inequality in \eqref{eq:partwise-perron-cap} absorbs its
$\big(O(\eta^2)+o(1)\big)x^*$ term and gives
$x_A^*\le(1+O(\eta^2)+o(1))\sqrt{b/(2m)}$.
Substitution in the second inequality in \eqref{eq:partwise-perron-cap},
together with $b/a<100$, gives the asserted bound on $x_B^*$.
If $x^*=x_B^*$, the symmetric argument gives the same two bounds.
This proves \eqref{eq:balanced-cap}.

Since \(ab=\bigl(1+o(1)\bigr)m\) and \(1\leq b/a<100\),
it follows that $a,b=\Theta(\sqrt m)$.
Then $d_G(u)\le a+b=\Theta(\sqrt m)<\varepsilon_0m$ for any $u\in A$.
In view of \eqref{eq:balanced-cap}, we obtain
\begin{equation}\label{sssss}
x^*=\max\big\{x_A^*,x_B^*\big\}=O\big(m^{-1/4}\big).
\end{equation}

Choose $u\in A$ arbitrarily. By Lemma \ref{lem:core},
$x_u^2\ge\big(1-2\varepsilon_0\big)d_G(u)\big/(2m).$
Recall that $\alpha_u=d_A(u)/a$ and $\beta_u=d_B(u)/b$.
Observe that $\sqrt{1-2\varepsilon_0}>1-2\varepsilon_0$ and $d_G(u)\geq d_B(u)=\beta_ub$.
Consequently, 
\[ x_u\ge\sqrt{\big(1-2\varepsilon_0\big)\frac{d_G(u)}{2m}}\ge\big(1-2\varepsilon_0\big)\,\sqrt{\beta_u}\sqrt{\frac{b}{2m}}.
\]
On the other hand, inequality \eqref{29} establishes
$\rho^2x_u\leq ab\,\big(\beta_u x^*_A+\alpha_u x^*_B\big)+o(m)x^*,$
where $\alpha_u\leq 2\eta^2$ by Lemma \ref{lem:small}.
Thus
$\rho^2x_u\le ab(\beta_ux^*_A+2\eta^2x^*_B)+o(m)x^*.$
Combining $\rho^2>m$ and $ab=\big(1+o(1)\big)m$
with \eqref{eq:balanced-cap} and \eqref{sssss}, we obtain
\[
 x_u\le \beta_ux^*_A+2\eta^2x^*_B+o(1)x^*\leq\bigl(\beta_u+O(\eta^2)+o(1)\bigr)\sqrt{b/(2m)}.
\]
In view of \eqref{eq:minedit}, $\beta_u\geq1/2.$
Note that the function $h(t):=\sqrt t-t$ is decreasing when $t\geq1/2$.
If $\beta_u\le1-\eta$, then
$\sqrt{\beta_u}-\beta_u
 \ge\sqrt{1-\eta}-(1-\eta)
 >\frac{\eta}{3}$
for all $\eta\ll 1$. Since $\varepsilon_0\ll\eta^2\ll1$ and
$O(\eta^2)=o(\eta)$, the preceding two inequalities for $x_u$ are incompatible whenever $m$ is sufficiently large. Hence,
$\beta_u>1-\eta$, and therefore $d_B(u)>(1-\eta)b$ for each $u\in A$. The argument for
$v\in B$ is similar and hence omitted here.
\end{proof}

\begin{lemma}\label{lem:split-perron-hierarchy}
Suppose $b\ge100a$, and let $u^*$ satisfy $x_{u^*}=x^*$.
Then the following hold:

(i) $u^*\in A$ and $d_B(u^*)\ge(1-3\eta^2)\,b$;

(ii) $x_v\le\frac14x_{u^*}$ for all $v\in B$;

(iii) $d_B(u)>\frac{11}{20}\,b$ for each $u\in A$ satisfying $x_u>\frac35x_{u^*}$.
\end{lemma}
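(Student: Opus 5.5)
The plan is to compare Perron coordinates across the two sides using the two-step inequalities \eqref{ooooo} and \eqref{29}, together with the lower bounds of Lemma~\ref{lem:perron-mass} and the degree restrictions coming from $W_{2k+1}$-freeness (Lemma~\ref{lem:small}). Since $b\ge 100a$ and $ab=(1+o(1))m$, we have $b\ge(10-o(1))\sqrt m$ and $a\le(1/10+o(1))\sqrt m$. Note that $a$ may or may not tend to infinity, so I will keep both regimes in mind throughout.

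\textbf{Step 1: a uniform cap on $B$.} I would first show that $x_B^*$ is small compared with $x^*$. Let $v\in B$ attain $x_B^*$. The one-step eigen-equation gives
\[
\rho x_v\le d_A(v)\,x_A^*+\sum_{w\in N_B(v)}x_w\le a\,x^*+\sqrt{d_B(v)}\Bigl(\sum_{w\in N_B(v)}x_w^2\Bigr)^{1/2}.
\]
With $\rho>\sqrt m$ and $a\le\sqrt m/10$, the first term contributes at most $\tfrac1{10}x^*$. For the second term I need $d_B(v)$ to be small.

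\begin{itemize}
\item If $d_B(v)>4\eta^2 b$ and $d_A(v)\ge k$, then, since $d_A(v)\ge a/2$ by \eqref{eq:minedit} and almost all of $A$ lies in $A^*$, the cycle construction of Lemma~\ref{lem:small} (alternating between $N_A(v)\cap A^*$ and common neighbours in $N_B(v)$) produces a $C_{2k}$ in $G[N(v)]$, which is impossible. So in this case $d_B(v)\le 4\eta^2 b$.
\item In the remaining case $a<2k$, I would instead use that all edges inside $B$ are edit errors, together with conclusion~(ii) of Lemma~\ref{lem:core}. This gives $x_w^2\ge(1-2\eps_0)d_G(w)/(2m)$ for every $w\in B$, since such $w$ has $d_G(w)\le a+D<\eps_0 m$.
\end{itemize}

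Combining these with the coordinate lower bounds, the aim is to reach $x_v\le(1/10+O(\eta))x^*\le\tfrac14x^*$. This simultaneously proves~(ii) and shows $u^*\notin B$, so $u^*\in A$.

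\textbf{Step 2: part (i).} Apply \eqref{29} at $u^*$ to get
\[
\rho^2x^*\le ab\bigl(\alpha_{u^*}x_B^*+\beta_{u^*}x^*\bigr)+o(m)x^*.
\]
By Step~1, $x_B^*\le\tfrac14 x^*$. Using $\rho^2>m\ge(1-o(1))ab$, this gives $1\le\beta_{u^*}+\tfrac14\alpha_{u^*}+o(1)$.

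\begin{itemize}
\item If $a\to\infty$, Lemma~\ref{lem:small} gives $\alpha_{u^*}\le 2\eta^2$. Hence $\beta_{u^*}\ge 1-\eta^2/2-o(1)$, which yields $d_B(u^*)\ge(1-3\eta^2)b$.
\item If $a$ is bounded, I would sharpen Step~1 to $x_B^*=O(\sqrt{a/b})\,x^*$, which is what Lemma~\ref{lem:perron-mass} suggests heuristically. Then $\alpha_{u^*}x_B^*=o(x^*)$ and the same conclusion follows.
\end{itemize}

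\textbf{Step 3: part (iii).} For $u\in A$ with $x_u>\tfrac35x^*$, the same computation gives $\tfrac35<\beta_u+\alpha_u\,x_B^*/x^*+o(1)$.

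\begin{itemize}
\item If $\beta_u\le 4\eta^2$, then even the crude bound $\alpha_u\le1$ gives $\tfrac35<4\eta^2+\tfrac14+o(1)$, a contradiction.
\item Otherwise $d_B(u)>4\eta^2 b$, so Lemma~\ref{lem:small} gives $d_A(u)\le\eta^2a+k-1$. This forces $\alpha_u\,x_B^*/x^*$ to be at most $O(\eta^2)+o(1)$ once the sharpened cap $x_B^*=O(\sqrt{a/b})\,x^*$ is available. Hence $\beta_u>\tfrac35-O(\eta^2)-o(1)>\tfrac{11}{20}$.
\end{itemize}

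\textbf{Main obstacle.} The hardest step is Step~1, namely bounding $x_B^*$ strongly enough when $a$ is bounded, for instance $a<2k$. In that regime the cycle argument of Lemma~\ref{lem:small} is unavailable and $d_B(v)$ for $v\in B$ is not a priori controlled. My first attempt there is to bound $\sum_{w\in N_B(v)}x_w$ via Cauchy--Schwarz against the total Perron mass on $B$ outside $B^*$, and to use that edges inside $B$ number at most $D$. If that fails, I would fall back on the $\eps_0$-core estimate of Lemma~\ref{lem:core}(i) along $B$-internal edges to show that such edges cannot carry large coordinates.
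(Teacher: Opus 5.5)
Your treatment of the regime $a\to\infty$ is correct, and your Step~1 there is more direct than the paper's. Lemma~\ref{lem:small} gives $d_B(v)\le 2\eta^2 b$ for $v\in B$, so Cauchy--Schwarz gives $\rho x_v\le a x^*+2\eta\sqrt b$. Comparing with $x^*\ge(1-O(\eta^2))\sqrt{b/(2m)}$ from Lemma~\ref{lem:perron-mass} gives $x_B^*\le(\frac1{10}+O(\eta)+o(1))x^*$. This yields (ii) and $u^*\in A$ at once, without Lemma~\ref{lem:partwise-perron-cap}, and your Steps~2--3 then agree with the paper.

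The gap is the regime where $a$ stays bounded, which cannot be excluded. For $a<2k$ your Step~1 is only stated as an aim. Conclusion~(ii) of Lemma~\ref{lem:core} is a \emph{lower} bound on the coordinates in $B$, so it cannot give the needed upper bound on $x_v$. For every bounded $a$, Steps~2 and~3 rest on the ``sharpened cap'' $x_B^*=O(\sqrt{a/b})\,x^*$, which you never prove. Without some estimate of the form $x_B^*=o(x^*)$, the term $ab\,\alpha_{u^*}x_B^*$ in \eqref{29} is not negligible. Lemma~\ref{lem:small} only gives $\alpha_{u^*}\le\eta^2+(k-1)/a$, which is of constant size, and gives nothing when $a\le k-1$. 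So from \eqref{29} and $x_B^*\le\frac14x^*$ you get only $\beta_{u^*}\ge\frac34-o(1)$, not $d_B(u^*)\ge(1-3\eta^2)b$. Similarly you get only $\beta_u>\frac35-\frac14$ in (iii).

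The missing estimate is easy, and you need only $x_B^*=o(x^*)$, not $O(\sqrt{a/b})\,x^*$. Every edge inside $B$ is an edit error, so $d_B(v)\le D$. Cauchy--Schwarz with $\|\bm x\|=1$ then gives $\sum_{w\in N_B(v)}x_w\le\sqrt D=o(\sqrt m)$. Hence $x_v\le a x^*/\rho+\sqrt D/\rho=o(1)$ for all $v\in B$. Meanwhile Lemma~\ref{lem:perron-mass} gives $x^*\ge x_u\ge c>0$ for $u\in A^*$ when $a$ is bounded. This is essentially your ``first attempt''. It matches the paper, which gets $x_A^*\ge c_k$ via Weyl's inequality and $\sum_{u\in A}x_u^2=\frac12+o(1)$.

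Moreover, the degree parts of (i) and (iii) need no spectral input when $a$ is bounded. Then $b=\Theta(m)$, so $D=o(b)$. Since there are at most $D$ missing $A$--$B$ edges, every $u\in A$ has $d_B(u)\ge b-D=(1-o(1))b$. With these two observations added, your proof is complete.
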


\begin{proof}
Passing to a subsequence if necessary, we may assume that either \(a\to\infty\) or $a$ remains bounded as $m\to\infty$.
In the latter case, since $k$ is fixed, we may write $a=O_k(1)$.
Firstly, we treat the case $a\to\infty$.
In this case, we must have $u^*\in A$, i.e., \(x^* = x_A^*\).
Indeed, suppose \(x^* = x_B^*\). Upon absorbing the \(O(\eta^2)x^*\) term,
the second inequality in \eqref{eq:partwise-perron-cap} yields $x^*\le(1+O(\eta^2)+o(1))\sqrt{a/(2m)}$.
Meanwhile, the first assertion of Lemma~\ref{lem:perron-mass}
guarantees that $x_u\geq(1-O(\eta^2))\sqrt{b/(2m)}$ for all $u\in A^*$.
Since $b\ge 100a$, $x_u>x^*$ for all $u\in A^*$, a contradiction.
Hence, $u^*\in A$, and the same lower bound for $u\in A^*$ together with the first inequality in \eqref{eq:partwise-perron-cap} gives
\begin{equation}\label{bbbbb}
 x_{u^*}=\big(1+O(\eta^2)+o(1)\big)\sqrt{\frac{b}{2m}}.
\end{equation}

Now, we show \(d_B(u^*) \ge (1-3\eta^2)\,b\).
Suppose \(d_B(u^*)<(1-3\eta^2)\,b\). Since $u^*\in A$,
Lemma \ref{lem:small} gives \(d_A(u^*) \le 2\eta^2 a\).
Together with \eqref{ooooo}, this yields
$$\rho^2x_{u^*}\leq \big(d_A(u^*)b+d_B(u^*)a+o(m)\big)x_{u^*}
\le \bigl((1-\eta^2)ab + o(m)\bigr)x_{u^*}.$$
Recall that $ab=\big(1+o(1)\big)m$ and $\eta^2$ is constant. Thus we have $\rho^2<m$,
which contradicts the lower bound in \eqref{eq:core-rhosq}.
Consequently, \(d_B(u^*) \ge (1-3\eta^2)\,b\).

Next, since $\sqrt{b/a}\ge 10$,
the second inequality in \eqref{eq:partwise-perron-cap} together with \eqref{bbbbb} directly implies
$x^*_B\le\frac14x_{u^*},$ i.e., $x_v\le\frac14x_{u^*}$ for any vertex $v\in B$.
Now take $u\in A$ with $x_u>\frac35x_{u^*}$, and set
$\beta_u=d_B(u)/b$.  Again by Lemma~\ref{lem:small}, we have
$d_A(u)\le2\eta^2a$.
Together with \eqref{ooooo} and $x_B^*\le \frac14x_{u^*}$, this yields
$$\rho^2x_u\leq d_A(u)bx_B^*+\big(d_B(u)a+o(m)\big)x_{u^*}
\le \bigl(ab(\frac12\eta^2+\beta_u)+o(m)\bigr)x_{u^*}.$$
Recall that $ab=\big(1+o(1)\big)m$ and $\rho^2>m$.
Thus, $x_u\leq \big(\frac12\eta^2+\beta_u+o(1)\big)x_{u^*}.$
Combining this with $x_u>\frac35x_{u^*}$, we obtain
$\beta_u>11/20$ for sufficiently small $\eta$ and large $m$.

Secondly, we treat the case $a=O_k(1)$.
Now, $b=\Theta(m)$ and $D=o(m)=o(ab)$.
Since $\eta^2$ is constant, and the total number of missing $A$--$B$ edges is at most $D$,
we have $$d_B(u)\geq b-D=\big(1-o(1)\big)b\geq\big(1-3\eta^2\big)b>11b/20$$ for all vertices \(u\in A\).
This is in fact stronger than the corresponding statements in (i) and (iii).

We next verify that $u^*\in A$ and $x_v=o(1)$ for every $v\in B$.
Let $K$ be the union of $K_{A,B}$ and $|R|$ isolated vertices.
Denote $M=A(G)-A(K)$, and let \(\rho(M)\) denote the maximum absolute value of the eigenvalues of $M.$
Each of the $D$ edge discrepancies between $G$ and $K$ contributes two nonzero off-diagonal entries,
each equal to $1$ or $-1$, to $M$. Hence,
$\operatorname{tr}(M^2)=2D,$
and thus $\rho(M)\leq\sqrt{2D}$.
Note that $\rho(K)=\sqrt{ab}$. By Weyl's inequality,
$|\rho(G)-\sqrt{ab}|
\leq \sqrt{2D}
=o(\sqrt {ab}),$
which yields
\begin{equation}\label{44}
\rho(G)\geq\big(1-o(1)\big)\sqrt{ab}.
\end{equation}

Let $p=\sum_{u\in A}x_u^2$ and $q=\sum_{v\in B}x_v^2$.
Then, $p+q=1$.
Since $\bm x^{\top}M \bm x\leq \rho(M)\leq \sqrt{2D}$, we have
$$\rho(G)=\bm x^{\top}A(K)\bm x+\bm x^{\top}M\bm x\leq
2\Big(\sum_{u\in A}x_u\Big)
\Big(\sum_{v\in B}x_v\Big)+\sqrt{2D}.$$
Applying the Cauchy--Schwarz inequality on $A$ and $B$ yields
$$\rho(G)\leq2\sqrt{ap}\sqrt{b\,q}+\sqrt{2D}=(2\sqrt{p\,q}+o(1))\sqrt{ab}.$$
Together with \eqref{44}, this gives
$1-o(1)\leq 2\sqrt{p\,q}$.
Combining this with $2\sqrt{p\,q}\leq p+q=1$ implies
$p,q=\frac12+o(1)$.
Together with $a=O_k(1)$, this implies
$$x^*_A\geq \sqrt{\frac1a\sum_{u\in A}x_u^2}=\sqrt{\frac pa}\geq c_k$$
for some positive constant $c_k$ that depends only on $k$.

Let $v\in B$.
Every neighbor of $v$ outside $A$ is incident to an edge in the edit-error set between $G$ and $K$.
Hence,
$|N(v)\setminus A|\leq D.$
By the eigen-equation and the Cauchy--Schwarz inequality,
\[
\begin{aligned}
\rho x_v
\leq
\sum_{u\in A}x_u
+\sum_{u\in N(v)\setminus A}x_u
\leq
\sqrt{ap}
+\sqrt{D}
\sqrt{\sum_{u\in N(v)\setminus A}\!\!\!x_u^2}
\leq \sqrt{ap}+\sqrt D.
\end{aligned}
\]
Recall that $a=O_k(1)$, $\sqrt{D}=o(\sqrt{ab})$ and $\rho>\sqrt{m}=\Theta(\sqrt {ab})$.
Thus $x_v=o(1)$.
Together with \(x_{u^*}\ge x_A^*\ge c_k\), this implies $u^*\in A$ and \(x_v=o(x_{u^*})\) for all \(v\in B\).
The proof is therefore complete.
\end{proof}

\section{Proof of Theorem \ref{thm1}}

Let $G$ be a $W_5$-free graph of sufficiently
large size $m$. Following the standing convention of this paper,
all isolated vertices have been removed. If \(\rho(G)<g^*(m)\),
where \(g^*(m)=\frac12\big(1+\sqrt{4m+1}\big)\) is defined in \eqref{eq:gasymp},
then Theorem \ref{thm1} holds trivially.
Hence, it remains only to consider
\begin{equation}
        \rho(G)\ge g^*(m).
        \label{eq:thm1-start}
\end{equation}

We will show that equality must hold in \eqref{eq:thm1-start} and that $G$ possesses precisely the structure described in the theorem.
This establishes the upper bound and the equality characterization simultaneously, thereby completing the proof of Theorem \ref{thm1}.
Applying Lemma \ref{lem:core-reduction} with $k=2$ and $\varepsilon=\varepsilon_0$,
we obtain a connected $\varepsilon_0$-core $H\subseteq G$.
Let $h=e(H)$ and $\rho=\rho(H).$ Lemma \ref{lem:core-reduction} gives
\begin{equation}
        h=(1-o(1))m
        \quad\text{and}\quad
        \rho\ge g^*(h)~~ \mbox{(i.e., $\rho^2-\rho\geq h$)}.
        \label{eq:thm1-core-threshold}
\end{equation}
Furthermore, by Lemma \ref{lem:core-reduction}, $\rho=g^*(h)$ implies $H=G$.

\begin{proof}[\bf Proof of Theorem \ref{thm1}]
We now apply the structural conclusions from Section \ref{sec4} to the core $H$.
Let $A,B$ be its minimum-edit partition chosen so that $|V(H)\setminus (A\cup B)|$ is also minimized. By Lemma \ref{lem:R-empty},
$V(H)=A\cup B.$
Write $a=|A|$ and $b=|B|,$
and swap the two parts if necessary to ensure \(a\le b\). There are precisely two cases.

\medskip
\noindent\textbf{Case 1.} $b\ge100a$.

Let $\bm x=(x_v)$ be the Perron vector of $H$ and choose $u^*$ with
$x_{u^*}=\max_{v\in V(H)}x_v$.  Lemma \ref{lem:split-perron-hierarchy} gives
$u^*\in A$, $d_B(u^*)\ge(1-3\eta^2)b$ and
$x_v\le \frac14x_{u^*}$ for all $v\in B$.
We now proceed with two claims.

\begin{claim}\label{clm5.1}
Except for at most one vertex $u_0$,
every vertex \(u\in N_A(u^*)\) satisfies \(x_{v_1}+x_{v_2}\le x_{u^*}\) for all edges \(v_1v_2\in E(H[N(u^*)])\) not incident to $u$.
\end{claim}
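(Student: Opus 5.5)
The idea is to combine the Perron hierarchy from Lemma~\ref{lem:split-perron-hierarchy} with the local $C_4$-freeness of $H[N(u^*)]$. Since $H$ is $W_5$-free, $H[N(u^*)]$ contains no $C_4$. I read the claim as follows: every \emph{heavy} edge $v_1v_2\in E(H[N(u^*)])$, meaning one with $x_{v_1}+x_{v_2}>x_{u^*}$, is incident to one common vertex $u_0\in N_A(u^*)$. Equivalently, once that single vertex is set aside, every edge of $H[N(u^*)]$ avoiding it is light. Throughout, write $N(u^*)=N_A(u^*)\cup N_B(u^*)$.

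\textbf{Step 1: split the edges of $H[N(u^*)]$ by type.} Edges inside $N_B(u^*)$ are light, since by Lemma~\ref{lem:split-perron-hierarchy}(ii) both endpoints have coordinate at most $\frac14x_{u^*}$. An $A$--$B$ edge $uv$ with $u\in N_A(u^*)$ can be heavy only if $x_u>\frac34x_{u^*}>\frac35x_{u^*}$. Call $u\in N_A(u^*)$ \emph{big} if $x_u>\frac35x_{u^*}$. By Lemma~\ref{lem:split-perron-hierarchy}(iii), every big vertex has $d_B(u)>\frac{11}{20}b$. By (i), $u^*$ misses at most $3\eta^2b$ vertices of $B$, so every big $u$ has more than $(\frac{11}{20}-3\eta^2)b$ neighbours inside $N_B(u^*)$.

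\textbf{Step 2: at most one big vertex.} Suppose $u,u'$ are two big vertices. Their neighbourhoods inside $N_B(u^*)$ have sizes exceeding $(\frac{11}{20}-3\eta^2)b$ each, so they share at least $(\frac1{10}-6\eta^2)b\ge2$ common vertices $w,w'$. Then $uwu'w'u$ is a $C_4$ in $H[N(u^*)]$, which together with $u^*$ forms a $W_5$, a contradiction. Hence there is at most one big vertex; call it $u_0$. It follows that every heavy edge of type $N_A(u^*)$--$N_B(u^*)$ is incident to $u_0$.

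\textbf{Step 3: edges inside $N_A(u^*)$ (the main obstacle).} Such an edge $v_1v_2$ is heavy only if one endpoint exceeds $\frac12x_{u^*}$, and this is weaker than bigness. I would first bound each endpoint using \eqref{ooooo} together with $x_B^*\le\frac14x_{u^*}$, exactly as in the proof of Lemma~\ref{lem:split-perron-hierarchy}(iii), to get $x_{v_i}\le(\beta_{v_i}+\frac12\eta^2+o(1))x_{u^*}$. Heaviness then forces $\beta_{v_1}+\beta_{v_2}>1-\eta^2-o(1)$. On the other side, $C_4$-freeness applied to the path $v_1wv_2w'$ with $w,w'\in N_B(u^*)$ shows that $v_1$ and $v_2$ have at most one common neighbour in $N_B(u^*)$. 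Hence $\beta_{v_1}+\beta_{v_2}\le1+3\eta^2+o(1)$, so the heavy case sits exactly on the boundary of this estimate.

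To close the gap I would sharpen the upper bound on $x_{v_i}$. The $B$-neighbours of $v_i$ that are not adjacent to the other endpoint contribute with the coordinates of their own $A$-neighbours, and I would show these are on average noticeably smaller than $x_{u^*}$, using Lemma~\ref{lem:perron-mass} to control the Perron mass on $A$. Under the heavy assumption, the aim is to force one endpoint to be big, i.e.\ equal to $u_0$. If this sharpening fails, the fallback is to show directly that two heavy $A$--$A$ edges not sharing a vertex would, together with the near-complementary $B$-neighbourhoods of their endpoints, produce a $C_4$ in $N(u^*)$. Either way, all heavy edges are incident to $u_0$, which proves the claim.
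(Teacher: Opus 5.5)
You have a genuine gap in Step 3. Steps 1 and 2 are correct, but you leave Step 3 open, and neither route you sketch closes it.

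\textbf{Why the sharpening fails.} Your two estimates combine only to $x_{v_1}+x_{v_2}\le(1+O(\eta^2)+o(1))x_{u^*}$. An $O(\eta^2)$ error sitting exactly at the threshold $x_{u^*}$ cannot be removed by the averaging you propose. When $a\to\infty$, Lemma~\ref{lem:perron-mass} together with Lemma~\ref{lem:partwise-perron-cap} shows that every $u\in A^*$, i.e.\ almost all of $A$, has $x_u\ge(1-O(\eta^2)-o(1))x_{u^*}$. So the $A$-neighbours reached through $B$ are \emph{not} noticeably smaller than $x_{u^*}$.

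\textbf{Why the fallback fails.} Ruling out two vertex-disjoint heavy edges inside $N_A(u^*)$ does not route all heavy edges through one vertex. It excludes neither a heavy triangle in $N_A(u^*)$ nor a heavy $A$--$A$ edge avoiding the big vertex $u_0$ of Step 2.

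\textbf{The missing idea.} What you need is a global bound on $N_A(u^*)$. It comes from \eqref{eq:minedit}, which gives $d_B(u)\ge b/2$ for \emph{every} $u\in A$, not only for big ones. Set $Y_u=N_B(u)\cap N_B(u^*)$. Then each $u\in N_A(u^*)$ satisfies $|Y_u|\ge(\frac12-3\eta^2)b$. Your own $C_4$ argument gives $|Y_u\cap Y_{u'}|\le1$ for distinct $u,u'\in N_A(u^*)$. Three such vertices would force $b\ge|Y_{u_1}\cup Y_{u_2}\cup Y_{u_3}|\ge3(\frac12-3\eta^2)b-3>b$. Hence $d_A(u^*)\le2$, and this is the heart of the paper's proof.

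This bound disposes of Step 3 immediately. $H[N_A(u^*)]$ has at most one edge. Choose $u_0$ to be the big vertex if one exists, and otherwise an endpoint of that edge. Your Steps 1--2 then show that every edge of $H[N(u^*)]$ avoiding $u_0$ is light.

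The paper makes a slightly different choice: it takes $u_0$ to be the vertex of $N_A(u^*)$ with the larger $d_B$. It then checks that the other vertex $u_1$ has $d_B(u_1)\le\frac12(2b+1-d_B(u^*))<\frac{11}{20}b$, hence $x_{u_1}\le\frac35x_{u^*}$ by Lemma~\ref{lem:split-perron-hierarchy}(iii).
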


\begin{proof}
For \(u\in N_A(u^*)\), let \(Y_u=N_B(u^*)\cap N_B(u)\).
In view of \eqref{eq:minedit}, every vertex in $A$ has at most \(b/2\) non-neighbors in $B.$
Thus, \(|Y_u|\ge d_B(u^*)-b/2 \ge \bigl(\tfrac12-3\eta^2\bigr)b\). If \(u,u'\) are distinct vertices in
\(N_A(u^*)\), then \(|Y_u\cap Y_{u'}|\le 1\); otherwise two distinct vertices
\(v,v'\in Y_u\cap Y_{u'}\) yield the 4-cycle
\(uvu'v'u\) within \(H[N(u^*)]\),
which creates a copy of \(W_5\) in $H.$
If \(u^*\) possessed three distinct neighbors \(u_1,u_2,u_3\in A\), the inclusion–exclusion principle would give
$$b=|B|\ge|Y_{u_1}\cup Y_{u_2}\cup Y_{u_3}|
\ge 3\bigl(\tfrac12-3\eta^2\bigr)b-3 > b
$$
for sufficiently large $h$, again a contradiction.
Hence, \(d_A(u^*)\le 2\).

If \(d_A(u^*)\le 1\), let \(u_0\) denote the unique vertex in \(N_A(u^*)\) whenever such a vertex exists.
Every edge \(v_1v_2\in E(H[N(u^*)])\) not incident to \(u_0\) is therefore contained within $B$.
Hence, \(x_{v_1}+x_{v_2}\le \tfrac12 x_{u^*}\), because \(x_v\le \frac14x_{u^*}\) for all \(v\in B\).

Now suppose that $d_A(u^*)=2$, so let $N_A(u^*)=\{u_0,u_1\}$ with
$d_B(u_0)\ge d_B(u_1)$.  The bound $|Y_{u_0}\cap Y_{u_1}|\le1$ implies
$d_B(u^*)+d_B(u_0)+d_B(u_1)\le2b+1$, and hence
$d_B(u_1)\leq\frac12(2b+1-d_B(u^*))\le\frac12(b+1+3\eta^2b)<\frac{11}{20}b$ for sufficiently large $h$.
Now, the last assertion of Lemma \ref{lem:split-perron-hierarchy} gives
$x_{u_1}\le\frac35x_{u^*}$.
Thus, every edge of $H[N(u^*)]$ not incident
to $u_0$ either lies inside $B$ or joins $u_1$ to a vertex in $B$.
In both cases, the sum of the Perron coordinates at its endpoints is at most $(\frac14+\frac35)x_{u^*}$.
Therefore, the claim follows.
\end{proof}

\begin{claim}\label{clm5.2}
$\rho^2-\rho-h<0$.
\end{claim}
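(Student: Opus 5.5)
We plan to apply the residual identity of Lemma~\ref{lem:residual} to the core $H$ with $d=1$ and $c=0$. With $U=N_H(u^*)$ and $W=V(H)\setminus(\{u^*\}\cup U)$, it reads
\[
(\rho^2-\rho-h)x_{u^*}=\sum_{v_1v_2\in E(U)}(x_{v_1}+x_{v_2}-x_{u^*})-\rho x_{u^*}-\sum_{w\in W}f(w).
\]
Since $x_{u^*}$ is the maximum coordinate, $f(w)\ge0$ for every $w\in W$. It therefore suffices to show
\[
\sum_{v_1v_2\in E(U)}(x_{v_1}+x_{v_2}-x_{u^*})<\rho x_{u^*}.
\]

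\textbf{Bounding the sum over $E(U)$.} By Claim~\ref{clm5.1}, every edge of $H[U]$ that avoids the exceptional vertex $u_0$ contributes a nonpositive term. Only edges incident to $u_0$ can contribute positively, and each such term is at most $x_{u_0}+x_{v}-x_{u^*}\le x_v$. Hence the left-hand side is at most $\sum_{v\in N_U(u_0)}x_v$.

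We next control $N_U(u_0)$. Since $H$ is $W_5$-free, $H[U]$ is $C_4$-free, so we split $N_U(u_0)$ into its parts in $A$ and in $B$. The part in $A$ has at most one vertex, because $d_A(u^*)\le2$ was shown in the proof of Claim~\ref{clm5.1}. The part in $B$ is $Y_{u_0}$, which can be large (about $b/2$). However, every $v\in B$ satisfies $x_v\le\frac14 x_{u^*}$.

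\textbf{The main obstacle.} A crude count gives $\sum_{v\in Y_{u_0}}x_v\le \frac14|Y_{u_0}|x_{u^*}$, and this can exceed $\rho x_{u^*}$ because $b$ may be much larger than $\rho$. We need a sharper estimate, and we plan to use the eigen-equation at $u_0$:
\[
\sum_{v\in N(u_0)}x_v=\rho x_{u_0}\le\rho x_{u^*}.
\]
Thus $\sum_{v\in N_U(u_0)}x_v\le \rho x_{u_0}-\sum_{v\in N(u_0)\setminus U}x_v$. The subtracted sum includes $x_{u^*}$ itself, since $u_0\in U=N(u^*)$. This gives
\[
\sum_{v_1v_2\in E(U)}(x_{v_1}+x_{v_2}-x_{u^*})\le \rho x_{u_0}-x_{u^*}<\rho x_{u^*},
\]
which is the required strict inequality.

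We must also check the bookkeeping. First, the term with $v=u^*$ is not an edge of $U$, so subtracting it is legitimate. Second, for edges $u_0v$ in $U$ we used the bound $x_{u_0}+x_v-x_{u^*}\le x_v$, which holds because $x_{u_0}\le x_{u^*}$.

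If there is no exceptional vertex $u_0$, all terms in the sum are nonpositive. In that case $\rho^2-\rho-h\le-\rho<0$ immediately. Either way we obtain $\rho^2-\rho-h<0$.
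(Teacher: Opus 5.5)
Your proof is correct and is the paper's argument. You apply the residual identity with $c=0$ and $d=1$, use Claim~\ref{clm5.1} to confine the positive surplus to edges at $u_0$, and then use the eigen-equation at $u_0$ with $u^*\in N(u_0)\setminus U$ to get $\sum_{v\in N_U(u_0)}x_v\le(\rho-1)x_{u^*}$, which yields $\rho^2-\rho-h\le-1$. The intermediate split of $N_U(u_0)$ into its $A$- and $B$-parts is unnecessary, since the eigen-equation bound handles the whole neighbourhood at once.
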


\begin{proof}
Write $U=N(u^*)$ and $W=V(H)\setminus(\{u^*\}\cup U)$.  Applying Lemma~\ref{lem:residual}
with $c=0$ and $d=1$, we obtain
\begin{equation}\label{resadiu}
 (\rho^2-\rho-h)x_{u^*}
 =\!\!\sum_{v_1v_2\in E(U)}\!\!(x_{v_1}+x_{v_2}-x_{u^*})
  -\rho x_{u^*}-\sum_{w\in W}f(w).
\end{equation}
Since $x_{u^*}$ is maximal, $f(w)\ge0$ for all $w\in W$.
If the exceptional vertex in Claim \ref{clm5.1} does not exist, then every edge of $H[U]$
has nonpositive surplus and the right-hand side is strictly negative.

Otherwise, let $u_0$ be the exceptional vertex in Claim \ref{clm5.1}.
Then, Claim \ref{clm5.1} shows that all
positive surplus is carried by edges incident to $u_0$, and hence
\[
 \sum_{v_1v_2\in E(U)}\!\!(x_{v_1}+x_{v_2}-x_{u^*})
 \le\sum_{v\in N_U(u_0)}\!\!(x_{u_0}+x_v-x_{u^*})
 \le\sum_{v\in N_U(u_0)}\!\!x_v.
\]
As $u^*u_0\in E(H)$, the eigen-equation at $u_0$ gives
$\sum_{v\in N_U(u_0)}x_v\le(\rho-1)x_{u^*}$.  Substituting this into \eqref{resadiu} yields
$(\rho^2-\rho-h)x_{u^*}\le-x_{u^*}$, and the claim follows.
\end{proof}

Claim \ref{clm5.2} contradicts the second inequality in \eqref{eq:thm1-core-threshold}.
Hence, Case 1 cannot occur.
In the following, it suffices to consider the case $b<100a$.

\medskip
\noindent\textbf{Case 2.} $b<100a$.

In this case, Lemma \ref{cor:bounded-ratio-perron-cap} implies that
$d_B(u)>(1-\eta)b$ for every $u\in A$ and
$d_A(v)>(1-\eta)a$ for every $v\in B$.

\begin{claim}\label{clm5.3}
$\Delta(H[A])\le1$ and $\Delta(H[B])\le1$.
\end{claim}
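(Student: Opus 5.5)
Since both parts are nearly complete to each other, we will argue directly by contradiction using $W_5$-freeness. We treat $H[A]$; the case of $H[B]$ is symmetric, because Lemma \ref{cor:bounded-ratio-perron-cap} gives the analogous degree bound for vertices of $B$ and the argument below never uses $a\le b$.

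Suppose some $u\in A$ has two neighbors $u_1,u_2\in A$. We will exhibit a $4$-cycle inside $H[N(u)]$, which together with $u$ forms a copy of $W_5$. Lemma \ref{cor:bounded-ratio-perron-cap} gives $d_B(w)>(1-\eta)b$ for each $w\in\{u,u_1,u_2\}$. Hence
\[
|N_B(u)\cap N_B(u_1)\cap N_B(u_2)|>(1-3\eta)b\ge 2,
\]
since $b\to\infty$ (indeed $a,b=\Theta(\sqrt h)$ in Case 2). We pick two distinct vertices $v_1,v_2$ in this common neighborhood. Then $u_1v_1u_2v_2u_1$ is a $4$-cycle. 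All four of its vertices lie in $N(u)$: $u_1,u_2$ are neighbors of $u$ by assumption, and $v_1,v_2\in N_B(u)$ by construction. So $H[N(u)]$ contains $C_4$, and $u$ is the center of a $W_5$. This contradicts the fact that $H\subseteq G$ is $W_5$-free, so $d_A(u)\le1$ for every $u\in A$.

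The same argument with $A$ and $B$ exchanged gives $\Delta(H[B])\le1$. It uses $d_A(v)>(1-\eta)a$ for $v\in B$ and the fact that $a\to\infty$.

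There is no real obstacle here. The only point to check is that the common neighborhood has at least two vertices, which follows from $\eta\ll1$ and $a,b=\Theta(\sqrt h)$. The substantive work, namely the uniform degree bound $>(1-\eta)$ replacing the weaker bound $b/2$ from \eqref{eq:minedit}, was already done in Lemma \ref{cor:bounded-ratio-perron-cap}. With the bound $b/2$ alone, three vertices need not have a common neighbor; this is exactly why Case 1 needed the separate argument of Claim \ref{clm5.1}.
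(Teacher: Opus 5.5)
Your proof is correct and follows essentially the same route as the paper: both use the degree bound $d_B(\cdot)>(1-\eta)b$ from Lemma~\ref{cor:bounded-ratio-perron-cap} to get a common $B$-neighbourhood of size greater than $(1-3\eta)b\ge 2$ for $u$ and its two $A$-neighbours. Both then build the $4$-cycle $u_1v_1u_2v_2u_1$ inside $H[N(u)]$, and conclude for $H[B]$ by symmetry using $a,b=\Theta(\sqrt h)$.
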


\begin{proof}
Suppose for contradiction that some $u_1\in A$ admits two distinct neighbors $u_2,u_3$ within $A$.
We then have
\[
 \big|N_B(u_1)\cap N_B(u_2)\cap N_B(u_3)\big|\geq \sum_{i=1}^3d_B(u_i)-2\big|\cup_{i=1}^3N_B(u_i)\big|\geq(1-3\eta)b\ge2
\]
for sufficiently large $h$.
Taking two distinct vertices $v_1,v_2$ from this common neighborhood, 
we see that $u_2v_1u_3v_2u_2$ is a $4$-cycle in $H[N(u_1)]$. 
Together with the center $u_1$, this yields a forbidden $W_5$.
Consequently, $\Delta(H[A])\le 1$, and the same reasoning applies to $H[B]$.
\end{proof}

Based on Claim \ref{clm5.3},
Proposition \ref{lem:w5-matching} now gives $\rho^2-\rho\le h$.
Combining this with the second inequality in \eqref{eq:thm1-core-threshold}
forces equality \(\rho^2-\rho=h\), or equivalently, $\rho=g^*(h).$
By Lemma \ref{lem:core-reduction}, we now have $H=G$.
Again by Proposition \ref{lem:w5-matching}, we immediately conclude that
$G$ possesses precisely the structure described in Theorem \ref{thm1}.
This completes the proof.
\end{proof}

\section{Proof of Theorem \ref{thm2}}

Let $G$ be a $W_{2k+1}$-free graph of sufficiently large size $m$.
If \(\rho(G)<g_k(m)\),
where \(g_k(m)=\frac12\big(k-1+\sqrt{4m-k^2+1}\big)\) is defined in \eqref{eq:gdef},
then Theorem \ref{thm2} holds trivially.
Hence, it remains only to consider $\rho(G)\ge g_k(m)$.
It suffices to show that $G$
must in fact satisfy
$\rho(G)=g_k(m)$
and $G\cong K_k\vee qK_1$,
where $m=\binom{k}{2}+kq$.

\begin{proof}[{\bf Proof of Theorem \ref{thm2}}]
By Lemma \ref{lem:core-reduction},
we see that $G$ contains a connected $\varepsilon_0$-core, say $H$, such that
$h:=e(H)=(1-o(1))m$ and
$\rho:=\rho(H)\ge g_k(h)>\sqrt h$.
Let $S$ and $T$ be two disjoint vertex subsets of $H$. We denote by \(H[S,T]\) the bipartite subgraph of $H$
with vertex set \(S\cup T\) whose edge set consists of all edges of $H$ with one endpoint in $S$ and the other in $T$.
We write $E(S)=E(H[S])$ and $E(S,T)=E(H[S,T])$ for simplicity.
Moreover, define
$e(S)=e(H[S])$ and $e(S,T)=e(H[S,T]).$

In what follows, we apply all results of Section~\ref{sec4} to $H$,
with $h=e(H)$ in place of the parameter $m$ used there.
Let $A,B$ be the minimum-edit partition from
Section~\ref{sec4}.  Lemma~\ref{lem:R-empty} gives $V(H)=A\cup B$, and we relabel
the parts so that $a=|A|\le b=|B|$.
Let $\bm{x}=(x_v)$ be the Perron vector of $H$.
Choose $u^*\in V(H)$ with
$x_{u^*}=\max_{v\in V(H)}x_v.$
Following the proof framework of Theorem \ref{thm1},
we consider two cases: the first yields \(H=G\cong K_k\vee qK_1\),
while the second leads to a contradiction.

\medskip
\noindent\textbf{Case 1.} $b\ge100a$.

By Lemma \ref{lem:split-perron-hierarchy}, we have \(u^*\in A\),
\(d_B(u^*)\ge(1-3\eta^2)b\),
every $v\in B$ has \(x_v\leq\frac14x_{u^*}\),
and each \(u\in A\) with \(x_u>\frac{3}{5}x_{u^*}\) satisfies
\(d_B(u)>\frac{11}{20}b\).
Write
$L=\big\{u\in N_A(u^*): x_u>\frac35x_{u^*}\big\}$ and $L^*=N(u^*)\setminus L$.
Furthermore, we define
\[S_{L^*}
=\!\!\sum_{yz\in E(L^*)}\!\!(x_y+x_z-x_{u^*}) \quad \mbox{and} \quad 
S_L=\!\!\sum_{yz\in E(N(u^*))\setminus E(L^*)}
\!\!\!(x_y+x_z-x_{u^*}).
\]

\begin{claim}\label{claim6.1}
We have $|L|\le k-1$.
\end{claim}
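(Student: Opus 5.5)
We argue by contradiction, in the same spirit as the proof of Claim~\ref{clm5.1}. Suppose $|L|\ge k$, and pick $k$ distinct vertices $v_1,\dots,v_k\in L$, with indices read modulo $k$. By Lemma~\ref{lem:split-perron-hierarchy}(iii), each $v_i$ satisfies $d_B(v_i)>\frac{11}{20}b$. By Lemma~\ref{lem:split-perron-hierarchy}(i), $d_B(u^*)\ge(1-3\eta^2)b$. We will build a $2k$-cycle inside $H[N(u^*)]$ that alternates between the $v_i$ and vertices of $N_B(u^*)$. Such a cycle gives a copy of $W_{2k+1}$ centred at $u^*$, contradicting that $H\subseteq G$ is $W_{2k+1}$-free.

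The key step is a common-neighbourhood count for consecutive pairs. For each $i$, set $Z_i=N_B(u^*)\cap N_B(v_i)\cap N_B(v_{i+1})$. Inclusion--exclusion inside $B$ gives
\[
|Z_i|\ \ge\ d_B(u^*)+d_B(v_i)+d_B(v_{i+1})-2b\ >\ (1-3\eta^2)b+\tfrac{11}{10}b-2b=\big(\tfrac1{10}-3\eta^2\big)b.
\]
Since $\eta\ll1$, this is at least $b/20$.

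We also need $b\to\infty$. In Case~1 we have $b\ge100a$ and $ab=(1+o(1))h$, so $b\ge\sqrt{h}/2\to\infty$. Hence $|Z_i|\ge b/20\ge k$ for $h$ large. We can then choose distinct $w_i\in Z_i$ greedily, $i=1,\dots,k$, since each earlier choice removes only one vertex from the later sets. Then $v_1w_1v_2w_2\cdots v_kw_kv_1$ is a $2k$-cycle all of whose vertices lie in $N(u^*)$. Its vertices are distinct because the $v_i$ lie in $A$ and the $w_i$ in $B$, and these are disjoint by Lemma~\ref{lem:R-empty}. Together with $u^*$ this is a $W_{2k+1}$, which is the desired contradiction.

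The only delicate point is making the three-fold intersection genuinely linear in $b$. This is exactly why the threshold $\frac{11}{20}$ appears in Lemma~\ref{lem:split-perron-hierarchy}(iii): two vertices each exceeding $\frac{11}{20}b$ leave a surplus of $\frac1{10}b$, which absorbs the $3\eta^2 b$ deficiency of $u^*$. Beyond that, the argument only uses $v_i\in N_A(u^*)$, which holds by the definition of $L$, so all cycle vertices are indeed neighbours of $u^*$. Consequently $|L|\le k-1$.
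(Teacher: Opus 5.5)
Your proof is correct and is the paper's argument: assume $|L|\ge k$, use Lemma~\ref{lem:split-perron-hierarchy}(i),(iii) to get $|N_B(u^*)\cap N_B(v_i)\cap N_B(v_{i+1})|>(\frac1{10}-3\eta^2)b$, choose distinct $w_i$, and obtain a $C_{2k}$ in $H[N(u^*)]$, hence a $W_{2k+1}$. Your extra remarks ($b\to\infty$ and the greedy choice of the $w_i$) just spell out what the paper leaves implicit; one small correction is that $A\cap B=\varnothing$ holds because the stability partition is chosen with disjoint parts, not because of Lemma~\ref{lem:R-empty}.
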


\begin{proof}
Suppose for contradiction that $|L|\ge k$, and choose distinct vertices
$v_1,\ldots,v_k\in L$, with indices taken cyclically.
Since $v_i\in L$, Lemma~\ref{lem:split-perron-hierarchy} gives
$d_B(v_i)>\frac{11}{20}b$,
while $d_B(u^*)\ge(1-3\eta^2)b$.
Hence, for every $i\in\{1,\ldots,k\}$, we have
\begin{align*}
\big|N_B(u^*)\cap N_B(v_i)\cap N_B(v_{i+1})\big|
\ge
d_B(u^*)+d_B(v_i)+d_B(v_{i+1})-2b
>\big(\frac1{10}-3\eta^2\big)b.
\end{align*}
For sufficiently small $\eta$ and sufficiently large $h$, the
right-hand side is larger than $k$.  We may therefore choose pairwise
distinct vertices
$w_i\in
N_B(u^*)\cap N_B(v_i)\cap N_B(v_{i+1})$
for each $i\in\{1,\ldots,k\}$.
Thus,
$v_1w_1v_2w_2\cdots v_kw_kv_1$
is a copy of $C_{2k}$ in $H[N(u^*)]$.  Together with the center $u^*$,
this gives a copy of $W_{2k+1}$, a contradiction. Hence,
$|L|\le k-1$.
\end{proof}

\begin{claim}\label{claim6.2}
$S_{L^*}\le0$, with equality only if $L^*\cap A=\varnothing$ and $e(L^*)=0$.
\end{claim}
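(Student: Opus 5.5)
Every vertex of $L^*$ has Perron coordinate at most $\frac35x_{u^*}$. Indeed, $L^*=N(u^*)\setminus L$ consists of $N_B(u^*)$ together with the vertices $u\in N_A(u^*)$ with $x_u\le\frac35x_{u^*}$. By Lemma~\ref{lem:split-perron-hierarchy}(ii), every $v\in B$ satisfies $x_v\le\frac14x_{u^*}$. The plan is to check the sign of each summand of $S_{L^*}$ edge by edge, splitting according to where the endpoints $y,z$ of an edge $yz\in E(L^*)$ lie.

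There are three edge types. If $y,z\in B$, then $x_y+x_z-x_{u^*}\le(\frac14+\frac14-1)x_{u^*}=-\frac12x_{u^*}<0$. If $y\in A\cap L^*$ and $z\in B$, then $x_y+x_z-x_{u^*}\le(\frac35+\frac14-1)x_{u^*}=-\frac{3}{20}x_{u^*}<0$. If $y,z\in A\cap L^*$, the crude bound only gives $x_y+x_z-x_{u^*}\le\frac15x_{u^*}$, which does not suffice, so this type needs a structural argument.

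The $A$--$A$ case is the main obstacle. I would first try to show that $H[L^*\cap A]$ has no edges. By Lemma~\ref{lem:small} (when $a\to\infty$), $d_A(u^*)\le2\eta^2a$, so $A\cap L^*$ is small but not obviously edgeless. A cleaner option is to bound the total positive contribution: for each $y\in A\cap L^*$, the eigen-equation together with \eqref{ooooo} gives $\rho^2x_y\le(d_B(y)a+2\eta^2ab\cdot\frac14+o(h))x_{u^*}$. If $d_B(y)$ is small, this forces $x_y\le\frac12x_{u^*}$, and then $x_y+x_z\le x_{u^*}$ with equality impossible unless both coordinates are exactly $\frac12x_{u^*}$. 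If instead $d_B(y)$ and $d_B(z)$ are both large, at least about $\frac{11}{20}b$ say, then $u^*,y,z$ have a large common $B$-neighbourhood. In that case I would build a $C_{2k}$ in $H[N(u^*)]$: use the edge $yz$, alternate through common $B$-neighbours, and close the cycle with further vertices of $N_A(u^*)$ of large $B$-degree, or with a path through $B$ using $B$-internal edges. This contradicts $W_{2k+1}$-freeness. If that construction fails for lack of enough high-degree $A$-neighbours, the fallback is to charge the positive surplus $\sum(x_y+x_z-x_{u^*})^+$ over $A$--$A$ edges against the strongly negative $B$-type terms (each $\le-\frac{3}{20}x_{u^*}$), using that each $y\in A\cap L^*$ has many $B$-neighbours in $N(u^*)$. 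This last step is the one I am least sure of, and I would settle it by comparing degrees.

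The equality case then follows. All summands from $B$--$B$ and $A$--$B$ edges are strictly negative, and $A$--$A$ edges inside $L^*$ are excluded or strictly negative. Hence $S_{L^*}=0$ forces $E(L^*)=\varnothing$, that is, $e(L^*)=0$. The additional conclusion $L^*\cap A=\varnothing$ should come from the degree of any $y\in L^*\cap A$: such a $y$ has at least $b/2$ neighbours in $B$ by \eqref{eq:minedit}, and all but $3\eta^2b$ of these lie in $N_B(u^*)\subseteq L^*$. This produces edges of $L^*$ with strictly negative surplus, so $S_{L^*}<0$. Therefore equality forces $L^*\cap A=\varnothing$.
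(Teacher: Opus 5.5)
The three-way split by edge type is right, but the proposal never controls the $A$--$A$ edges of $L^*$, which is the whole content of the claim. The routes you put first do not work.

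The degree split fails at its first step. By \eqref{eq:minedit} every $y\in A$ has $d_B(y)\ge b/2$, so your estimate yields only $x_y\le(\frac12+O(\eta^2)+o(1))x_{u^*}$, never $x_y\le\frac12x_{u^*}$. Even granting it, the mixed case $x_y\le\frac12x_{u^*}$, $x_z\le\frac35x_{u^*}$ gives only $x_y+x_z\le\frac{11}{10}x_{u^*}$. That estimate also uses Lemma~\ref{lem:small}, which needs $a\to\infty$, whereas Case~1 allows bounded $a$.

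The $C_{2k}$ construction is not available either. A cycle in $H[N(u^*)]$ that uses the single $A$--$A$ edge $yz$ and otherwise alternates between $A$ and $B$ has odd length. You would therefore need another internal edge plus at least $k-2$ further vertices of $N_A(u^*)$ with large common $B$-neighbourhoods. Nothing proved at this stage supplies them: the lower bound on $|L|$ comes later and depends on this very claim. Consequently your equality argument, which presupposes that every $A$--$A$ summand is ``excluded or strictly negative'', is also unsupported.

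The missing step is exactly your ``fallback''. It closes with a crude count that uses the Case~1 hypothesis $b\ge100a$, which your proposal never invokes. You already wrote down the needed ingredient in your last paragraph:
\begin{itemize}
\item Since $L\subseteq A$, we have $L^*\cap B=N_B(u^*)$. So each $y\in L^*\cap A$ has at least $(\frac12-3\eta^2)b$ neighbours in $L^*\cap B$, and each such edge contributes at most $-\frac{3}{20}x_{u^*}$.
\item Trivially $e(L^*\cap A)\le\frac a2|L^*\cap A|$, and each such edge contributes at most $\frac15x_{u^*}$.
\end{itemize}
Summing gives $S_{L^*}\le\big(\frac a{10}-\frac3{20}(\frac12-3\eta^2)b\big)|L^*\cap A|\,x_{u^*}$. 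This is strictly negative whenever $L^*\cap A\ne\varnothing$, because $b\ge100a$.

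If $L^*\cap A=\varnothing$, every edge of $L^*$ lies in $B$ and contributes at most $-\frac12x_{u^*}$. This gives $S_{L^*}\le0$ together with the equality characterization.

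This is the paper's argument. It needs neither a structural exclusion of $A$--$A$ edges nor $a\to\infty$.
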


\begin{proof}
By the definition of $L$,
$x_u\le\frac35x_{u^*}$ for every $u\in L^*\cap A$,
while Lemma \ref{lem:split-perron-hierarchy} gives
$x_v\le\frac14x_{u^*}$ for every $v\in L^*\cap B$.
Consequently, an edge inside $L^*\cap A$ contributes at most
$\frac 15 x_{u^*}$ to $S_{L^*}$, an edge between $L^*\cap A$ and
$L^*\cap B$ contributes at most $-\frac{3}{20}x_{u^*}$, and an edge inside
$L^*\cap B$ contributes at most $-\frac 12 x_{u^*}$.  Hence
\begin{align}\label{eq:39A}
S_{L^*}
\leq
\frac15 e\big(L^*\cap A\big)x_{u^*}
-\frac3{20}e\big(L^*\cap A,L^*\cap B\big)x_{u^*}
-\frac12e\big(L^*\cap B\big)x_{u^*}.
\end{align}

Suppose now that $L^*\cap A\ne\varnothing$.
In view of \eqref{eq:minedit}, $d_B(u)\ge\frac b2$ for every $u\in L^*\cap A$.
Since also $d_B(u^*)\ge(1-3\eta^2)b$, we have
$|N_B(u)\cap N_B(u^*)|\geq(\frac12-3\eta^2)b$.
Since $L\subseteq A$, removing $L$ from $N(u^*)$ removes no vertex of
$B$, and therefore
$L^*\cap B=N(u^*)\cap B$.
Consequently,
$d_{L^*\cap B}(u)\geq (\frac12-3\eta^2)b$
for every $u\in L^*\cap A$.
Summing over all $u\in L^*\cap A$, we obtain
\begin{align*}
e\big(L^*\cap A,L^*\cap B\big)
=\sum_{u\in L^*\cap A}\!\!d_{L^*\cap B}(u)
\ge\big(\frac12-3\eta^2\big)b\,\big|L^*\cap A\big|.
\end{align*}

On the other hand,
$e(L^*\cap A)
\le \frac a2|L^*\cap A|$.
Substituting these estimates into \eqref{eq:39A} and
discarding the last nonpositive term gives
\[
S_{L^*}
\le
\Big(
\frac a{10}
-\frac3{20}
\big(\frac12-3\eta^2\big)b
\Big)
\Big|L^*\cap A\Big|x_{u^*}.
\]
Since $b\ge100a$ and $\eta$ is sufficiently small,
we have $S_{L^*}<0$ whenever $L^*\cap A\ne\varnothing$.
Finally, if $L^*\cap A=\varnothing$, then $L^*\subseteq B$, and every
$yz\in E(L^*)$ satisfies
$x_y+x_z-x_{u^*}\le-\frac12x_{u^*}<0.$
Hence, $S_{L^*}\le0$, with equality only when $L^*\cap A=\varnothing$ and
$e(L^*)=0$.
\end{proof}

Write $P=\rho^2-(k-1)\rho-h+\binom{k}{2}$.
Since $\rho\geq g_k(h)$, it follows that $P\geq 0$.
Define $U=N(u^*)$ and $W=V(H)\setminus(\{u^*\}\cup N(u^*))$.
Applying Lemma \ref{lem:residual} with $d=k-1$ and $c=\binom{k}{2}$, we obtain
\begin{align}\label{eq:41A}
Px_{u^*}
=\!\!\sum_{yz\in E(U)}\!\!\!(x_y+x_z-x_{u^*})-(k-1)\rho x_{u^*}+\binom{k}{2}x_{u^*}-\sum_{w\in W}f(w).
\end{align}
Since $U=N(u^*)=L\cup L^*$, the edges in $E(U)$ can be divided into
the edges contained entirely in $L^*$ and the edges having at least
one endpoint in $L$. 
Then $\sum_{yz\in E(U)}(x_y+x_z-x_{u^*})=S_L+S_{L^*}$.

We now compute $S_L$ by summing the eigen-equations over the
vertices of $L$.
For every $v\in L$, the eigen-equation at $v$ is
\[
\rho x_v
=
x_{u^*}
+\!\!\sum_{z\in N_L(v)}\!\!x_z
+\!\!\sum_{z\in N_{L^*}(v)}\!\!x_z
+\!\!\sum_{z\in N_{W}(v)}\!\!x_z.
\]
Summing this identity over all $v\in L$ gives
\begin{align}\label{eq:42A}
\rho\sum_{v\in L}x_v
={}&
|L| x_{u^*}
+\!\!\sum_{yz\in E(L)}\!\!(x_y+x_z)
+\sum_{v\in L}\sum_{z\in N_{L^*}(v)}x_z
+\sum_{w\in W}d_L(w)x_w.
\end{align}
Indeed, every edge $yz\in E(L)$ is counted twice in the
summation over $L$, once as $x_z$ in the equation at $y$ and once
as $x_y$ in the equation at $z$, and hence contributes $x_y+x_z$.

On the other hand, the definition of $S_L$ yields
\begin{align*}
S_L
&=
\sum_{yz\in E(L)}\!\!(x_y+x_z-x_{u^*})
+\!\!\sum_{vz\in E(L,L^*)}\!\!(x_v+x_z-x_{u^*})\\
&=\sum_{yz\in E(L)}\!\!(x_y+x_z)
+\!\!\sum_{v\in L}\,\sum_{z\in N_{L^*}(v)}\!\!x_z
+\sum_{v\in L}d_{L^*}(v)x_v
-\bigl(e(L)+e(L,L^*)\bigr)x_{u^*}\\
&=\rho\sum_{v\in L}x_v
-|L| x_{u^*}
-\sum_{w\in W}\!\!d_L(w)x_w
+\sum_{v\in L}d_{L^*}(v)x_v
-\bigl(e(L)+e(L,L^*)\bigr)x_{u^*},
\end{align*}
where the last equality follows from \eqref{eq:42A}.
Since
$e(L,L^*)=\sum_{v\in L}d_{L^*}(v)$,
we have
\[
\sum_{v\in L}d_{L^*}(v)x_v
-e(L,L^*)x_{u^*}
=
-\sum_{v\in L}
d_{L^*}(v)(x_{u^*}-x_v).
\]
Also, we have
$\sum_{v\in L}x_v
=|L| x_{u^*}-\sum_{v\in L}(x_{u^*}-x_v)$.
Consequently,
\begin{align*}%\label{eq:43A}
S_L=
\big(|L|\rho-|L|-e(L)\big)x_{u^*}
-\sum_{v\in L}\big(\rho+d_{L^*}(v)\big)(x_{u^*}-x_v)
-\!\!\sum_{w\in W}\!\!d_L(w)x_w.
\end{align*}
Recall that \(\sum_{yz\in E(U)}(x_y+x_z-x_{u^*})=S_L+S_{L^*}\).
Substituting the equality above into \eqref{eq:41A} gives the following residual identity,
which will be used in the subsequent arguments.
\begin{align}\label{eq:40A}
Px_{u^*}
={}&
S_{L^*}
-\big(k-1-|L|\big)\rho x_{u^*}
+\Big(\binom{k}{2}-|L|-e(L)\Big)x_{u^*}
\nonumber\\
&-\sum_{v\in L}\big(\rho+d_{L^*}(v)\big)(x_{u^*}-x_v)
-\sum_{w\in W}\big(d_L(w)x_w+f(w)\big).
\end{align}

\begin{claim}\label{claim6.3}
We have $|L|=k-1$.
\end{claim}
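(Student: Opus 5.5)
By Claim \ref{claim6.1} it suffices to rule out $|L|\le k-2$. I will do this by showing that the residual identity \eqref{eq:40A} would then make $P$ strictly negative, contradicting $P\ge0$ (which holds because $\rho\ge g_k(h)$).

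Assume $|L|\le k-2$, so that $k-1-|L|\ge1$. I bound each term on the right-hand side of \eqref{eq:40A}:
\begin{itemize}
\item By Claim \ref{claim6.2}, $S_{L^*}\le0$.
\item Since $x_{u^*}$ is maximal, every term in $\sum_{v\in L}(\rho+d_{L^*}(v))(x_{u^*}-x_v)$ is nonnegative.
\item For each $w\in W$, both $d_L(w)x_w$ and $f(w)$ are nonnegative.
\item Also $e(L)\ge0$.
\end{itemize}
Substituting these bounds gives
\[
Px_{u^*}\le -\big(k-1-|L|\big)\rho x_{u^*}+\binom{k}{2}x_{u^*}\le\Big(-\rho+\binom{k}{2}\Big)x_{u^*}.
\]
Since $\rho>\sqrt h\to\infty$ while $k$ is fixed, we have $\rho>\binom{k}{2}$ for $h$ sufficiently large. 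Hence $P<0$, which is the desired contradiction, and so $|L|=k-1$.

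The argument is not delicate, because the dominant term $-(k-1-|L|)\rho x_{u^*}$ is of order $\rho$ and all other terms are either bounded or of the correct sign. The main step is confirming that every term dropped from \eqref{eq:40A} carries the sign claimed above.

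The only term that needs an argument is $f(w)=d_U(w)(x_{u^*}-x_w)+\frac12 d_W(w)x_{u^*}$. It is nonnegative because $x_w\le x_{u^*}$ by the maximality of $x_{u^*}$.

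The sign of $S_{L^*}$ is exactly the content of Claim \ref{claim6.2}, so no further work is needed there. No finer structural information, such as which vertices of $B$ are adjacent to $L$, is required for this claim.
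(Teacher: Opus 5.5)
Your proposal is correct and follows essentially the same route as the paper. Both arguments reduce to $|L|\le k-2$ via Claim~\ref{claim6.1}, then drop the nonpositive terms of \eqref{eq:40A} using Claim~\ref{claim6.2} and the maximality of $x_{u^*}$, bound $\binom{k}{2}-|L|-e(L)$ by $\binom{k}{2}$, and conclude $P<0$ from $\rho>\binom{k}{2}$, which contradicts $P\ge 0$.
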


\begin{proof}
We first analyze the signs of the terms in
\eqref{eq:40A}.
Since $x_{u^*}$ is the largest Perron coordinate, $x_{u^*}\geq x_v>0$ for all $v\in V(H)$,
and thus $f(w)\geq0$ for each $w\in W$.
Moreover, Claim \ref{claim6.2} gives
$\sum_{yz\in E(L^*)}(x_y+x_z-x_{u^*})\le0.$
Consequently,
discarding all the nonpositive terms in
\eqref{eq:40A}, and using
$\binom{k}{2}-|L|-e(L)\le \binom{k}{2}$,
we obtain
\begin{align*}
Px_{u^*}\le-\big(k-1-|L|\big)\rho x_{u^*}+\binom{k}{2}x_{u^*}.
\end{align*}

By Claim \ref{claim6.1}, we have $|L|\leq k-1$.
Now suppose that $|L|\le k-2$.
Then $Px_{u^*}\le\big(-\rho+\binom{k}{2}\big)x_{u^*}.$
For sufficiently large $h$, we have $\rho\geq g_k(h)>\binom{k}{2}$, and therefore
$P<0$. This leads to a contradiction.
Therefore, $|L|=k-1$.
\end{proof}

Set $\sigma:=\binom{|L|}2-e(L)$.
Since $|L|=k-1$ by Claim \ref{claim6.3}, and $\binom{k}2-(k-1)=\binom{|L|}2$,
identity \eqref{eq:40A} now simplifies to
\begin{align}\label{eq:44A}
Px_{u^*}=
S_{L^*}+\sigma x_{u^*}
-\sum_{v\in L}\big(\rho+d_{L^*}(v)\big)(x_{u^*}-x_v)
-\sum_{w\in W}\!\big(d_L(w)x_w+f(w)\big).
\end{align}

\begin{claim}\label{claim6.2G}
If $\sigma>0$, then we must have
$\sum_{v\in L}\big(\rho+d_{L^*}(v)\big)(x_{u^*}-x_v)
+\sum_{w\in W}d_L(w)x_w
>\sigma x_{u^*}.$
\end{claim}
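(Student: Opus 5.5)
The plan is to extract the required inequality from the eigen-equations at the vertices of $L$, comparing each one with the eigen-equation at $u^*$. Since $|L|=k-1$ and $\sigma=\binom{|L|}2-e(L)>0$, the graph $H[L]$ is not complete. For each $y\in L$, write $\overline{N}_L(y)=L\setminus(N_L(y)\cup\{y\})$ for the set of non-neighbours of $y$ inside $L$. Then $\sum_{y\in L}|\overline{N}_L(y)|=2\sigma$.

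Fix $y\in L$, and recall $U=N(u^*)$, $y\in U$ and $u^*\in N(y)$. The eigen-equations at $u^*$ and at $y$ read
\[
\rho x_{u^*}=x_y+\sum_{z\in U\setminus\{y\}}x_z,
\qquad
\rho x_y=x_{u^*}+\sum_{z\in N_U(y)}x_z+\sum_{w\in N_W(y)}x_w .
\]
Subtracting the second from the first gives
\[
\rho(x_{u^*}-x_y)+\sum_{w\in N_W(y)}x_w
=(x_y-x_{u^*})+\sum_{z\in U\setminus(N_U(y)\cup\{y\})}x_z .
\]
All Perron coordinates are positive, so the last sum is at least $\sum_{z\in\overline{N}_L(y)}x_z$. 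By the definition of $L$, every such $z$ has $x_z>\frac35x_{u^*}$. Hence
\[
(\rho+1)(x_{u^*}-x_y)+\sum_{w\in N_W(y)}x_w>\tfrac35\,|\overline{N}_L(y)|\,x_{u^*}
\]
whenever $\overline{N}_L(y)\neq\varnothing$, and the left-hand side is $\ge 0$ in any case.

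Next we sum over $y\in L$. Since $\sum_{y\in L}\sum_{w\in N_W(y)}x_w=\sum_{w\in W}d_L(w)x_w$ and $\sigma>0$ makes at least one inequality strict, we get
\[
\sum_{y\in L}(\rho+1)(x_{u^*}-x_y)+\sum_{w\in W}d_L(w)x_w>\tfrac65\,\sigma\,x_{u^*}\ge\sigma x_{u^*}.
\]
It remains to replace the factor $\rho+1$ by $\rho+d_{L^*}(v)$. For this we show $d_{L^*}(v)\ge1$ for every $v\in L$. Indeed, $v\in L\subseteq A$ has $d_B(v)>\frac{11}{20}b$ by Lemma \ref{lem:split-perron-hierarchy}(iii), while $d_B(u^*)\ge(1-3\eta^2)b$ by Lemma \ref{lem:split-perron-hierarchy}(i). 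Hence $|N_B(v)\cap N_B(u^*)|>(\frac{11}{20}-3\eta^2)b>0$. As observed in the proof of Claim \ref{claim6.2}, $N(u^*)\cap B\subseteq L^*$, so $v$ has a neighbour in $L^*$. Since $x_{u^*}-x_v\ge0$, we get $(\rho+d_{L^*}(v))(x_{u^*}-x_v)\ge(\rho+1)(x_{u^*}-x_v)$, and the claim follows.

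The main point requiring care is the accounting of the stray term $x_y-x_{u^*}$ in the difference of eigen-equations. It is absorbed into the coefficient $\rho+1$, and that is affordable only because $d_{L^*}(v)\ge1$. If that absorption failed, the fallback is to use the margin $\frac65\sigma-\sigma$ together with $x_{u^*}-x_y<\frac25x_{u^*}$ and $|L|=k-1$. That route is cruder, since it needs the sum of these defects to be small relative to $\sigma x_{u^*}$, so the $d_{L^*}(v)\ge1$ route is preferable.
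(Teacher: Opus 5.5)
Your proof is correct and is essentially the paper's argument: subtract the eigen-equation at $v\in L$ from the one at $u^*$, absorb the stray term $x_v-x_{u^*}$ into the coefficient $\rho+1$, sum over $L$, and bound the right-hand side below via the non-edges of $H[L]$, each of whose endpoints has coordinate exceeding $\frac35x_{u^*}$, to get more than $\frac65\sigma x_{u^*}$. You then upgrade $\rho+1$ to $\rho+d_{L^*}(v)$ using $d_{L^*}(v)\ge1$, which follows from Lemma~\ref{lem:split-perron-hierarchy}(iii) and $N_B(u^*)\subseteq L^*$; the only differences are cosmetic (you count non-neighbours vertex by vertex rather than by non-edges, and your closing remark about a fallback route is not needed).
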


\begin{proof}
Fix $v\in L\subseteq N(u^*)$.  
Comparing the eigen-equations at $u^*$ and $v$, we obtain
$\rho(x_{u^*}-x_v)=(x_v-x_{u^*})+
\sum_{u\in N(u^*)\setminus N[v]}x_u-
\sum_{w\in N(v)\setminus N[u^*]}x_w,$
and hence
\begin{equation}\label{eq:45A}
\big(\rho+1\big)\big(x_{u^*}-x_v\big)
+\!\!\sum_{w\in N(v)\setminus N[u^*]}\!\!x_w
=\!\!\sum_{u\in N(u^*)\setminus N[v]}\!\!x_u.
\end{equation}

Since $W=V(H)\setminus N[u^*]$ and $v\in L\subseteq N(u^*)$,
we have $N(v)\setminus N[u^*]=N_W(v)$.
Furthermore, interchanging the order of summation gives
$\sum_{v\in L}\sum_{w\in N_W(v)}x_w=\sum_{w\in W}d_L(w)x_w.$
Therefore, summing equality \eqref{eq:45A} over all vertices in $L$ yields
\begin{equation}\label{eq:46A}
\big(\rho+1\big)\sum_{v\in L}\big(x_{u^*}-x_v\big)
+\sum_{w\in W}d_L(w)x_w
=\sum_{v\in L}\,
\sum_{u\in N(u^*)\setminus N[v]}\!\!x_u.
\end{equation}

We next analyze the right-hand side of \eqref{eq:46A}.
Recall that $N(u^*)=L\cup L^*$. For each $v\in L$, any vertex in
$N(u^*)\setminus N[v]$ is a non-neighbor of $v$, lying either in $L$ or in $L^*$.  Thus,
$\sum_{v\in L}\sum_{u\in N(u^*)\setminus N[v]}x_u
\geq\sum_{v\in L}\sum_{\substack{u\in L\setminus N[v]}}x_u,$
where the right-hand term can be
rewritten in terms of the non-edges of $E(L)$.  Indeed, if
$uv$ is a non-edge of $E(L)$, then $u,v\in N(u^*)$ while
$uv\notin E(H)$. Hence, $v\in N(u^*)\setminus N[u]$, and so the
summand corresponding to $u$ contributes $x_v$.  Similarly,
$u\in N(u^*)\setminus N[v]$, and so the summand corresponding to
$v$ contributes $x_u$.  Consequently, every non-edge
$uv$ of $E(L)$ contributes exactly $x_u+x_v$, and therefore
\[
\sum_{v\in L}\,
\sum_{u\in N(u^*)\setminus N[v]}\!\!x_u\geq\sum_{v\in L}\,
\sum_{\substack{u\in L\setminus N[v]}}\!\!x_u
=\!\!\sum_{uv\in\binom{L}{2}\setminus E(L)}
\!\!\!\big(x_u+x_v\big).
\]
Substituting this inequality into \eqref{eq:46A} yields
\begin{equation}\label{eq:49A}
\big(\rho+1\big)\sum_{v\in L}\big(x_{u^*}-x_v\big)
+\sum_{w\in W}d_L(w)x_w
\ge\sum_{uv\in\binom{L}{2}\setminus E(L)}\!\!
\big(x_u+x_v\big).
\end{equation}

Now suppose that $\sigma=\binom{|L|}{2}-e(L)>0$.
The graph $H[L]$ has exactly $\sigma$ missing edges.
Moreover, by the definition of $L$,
$x_v>\frac35x_{u^*}$ for every $v\in L$.
Thus, for every non-edge $uv$ of $E(L)$,
we have $x_u+x_v>\frac65x_{u^*}$.
It follows from \eqref{eq:49A} that
\begin{equation}\label{eq:50A}
(\rho+1)\sum_{v\in L}\big(x_{u^*}-x_v\big)
+\sum_{w\in W}d_L(w)x_w
>\frac65 \sigma x_{u^*}.
\end{equation}

We also need to control the term involving $d_{L^*}(v)$.
For every $v\in L$,
by conclusion (iii) of Lemma \ref{lem:split-perron-hierarchy},
we have $d_B(v)>\frac{11}{20}b$.
Thus,
\[
\big|N_B(u^*)\cap N_B(v)\big|
>\big(\frac{11}{20}-3\eta^2\big)b>0.
\]
Since $L\subseteq A$, we have
$N_B(u^*)\subseteq (N(u^*)\setminus L)=L^*$.
Hence,
$N_B(u^*)\cap N_B(v)\subseteq N_{L^*}(v)$.
Therefore, $d_{L^*}(v)\ge1$.
Since $x_{u^*}-x_v\ge0$, it follows that
$(\rho+d_{L^*}(v))(x_{u^*}-x_v)
\ge(\rho+1)(x_{u^*}-x_v)$ for all $v\in L$.
Combining this with \eqref{eq:50A}, we obtain
\begin{align*}
\sum_{v\in L}\!\big(\rho+d_{L^*}(v)\big)\big(x_{u^*}-x_v\big)
+\sum_{w\in W}d_L(w)x_w
>\frac65 \sigma x_{u^*}
>\sigma x_{u^*},
\end{align*}
as required. This proves the claim.
\end{proof}

\begin{claim}\label{claim6G}
$H[L]\cong K_{k-1}$ and $\sigma=0$.
\end{claim}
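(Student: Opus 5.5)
Since $|L|=k-1$ by Claim \ref{claim6.3}, we have $H[L]\cong K_{k-1}$ exactly when $e(L)=\binom{k-1}{2}$, that is, when $\sigma=0$. It therefore suffices to prove $\sigma=0$. Note that $\sigma\ge0$ always, because $e(L)\le\binom{|L|}{2}$. So we argue by contradiction and assume $\sigma>0$. We will show that the right-hand side of the residual identity \eqref{eq:44A} is then strictly negative, which contradicts $P\ge0$.

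The steps, in order:
\begin{enumerate}
\item Split the two negative sums in \eqref{eq:44A} by setting
\[
X:=\sum_{v\in L}\big(\rho+d_{L^*}(v)\big)(x_{u^*}-x_v)+\sum_{w\in W}d_L(w)x_w
\quad\text{and}\quad
F:=\sum_{w\in W}f(w).
\]
With this notation \eqref{eq:44A} reads $Px_{u^*}=S_{L^*}+\sigma x_{u^*}-X-F$.
\item Bound each term:
\begin{itemize}
\item $S_{L^*}\le0$ by Claim \ref{claim6.2};
\item $F\ge0$, since $x_{u^*}$ is the maximum Perron coordinate and so every $f(w)\ge0$;
\item $X>\sigma x_{u^*}$ by Claim \ref{claim6.2G}, which applies because we assumed $\sigma>0$.
\end{itemize}
\item Combine the bounds to get $Px_{u^*}\le \sigma x_{u^*}-X<0$. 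Since $x_{u^*}>0$, this forces $P<0$.
\item This contradicts $P\ge0$, which holds because $\rho\ge g_k(h)$. Hence $\sigma=0$, so $e(L)=\binom{k-1}{2}$ and $H[L]\cong K_{k-1}$.
\end{enumerate}

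No step here should be a real obstacle. The substantive work was already done in Claim \ref{claim6.2G}, where the non-edges of $H[L]$ were charged against the Perron deficits $x_{u^*}-x_v$ and the $W$-contributions. The only thing to check is that this claim's strict inequality is stated with exactly the same quantities as those appearing in \eqref{eq:44A}.

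That check succeeds. In \eqref{eq:44A}, the term $\sum_{w\in W}\big(d_L(w)x_w+f(w)\big)$ splits into the $d_L(w)x_w$ part, which goes into $X$, and the nonnegative part $F$. The weights $\rho+d_{L^*}(v)$ in $X$ are the same weights used in Claim \ref{claim6.2G}. So the claim can be substituted directly, without any further estimate.
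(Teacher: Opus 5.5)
Your proof is correct and follows the paper's argument exactly: assuming $\sigma>0$, you combine $S_{L^*}\le 0$ from Claim~\ref{claim6.2}, $f(w)\ge 0$, and the strict bound of Claim~\ref{claim6.2G} in identity \eqref{eq:44A} to get $P<0$, contradicting $\rho\ge g_k(h)$. Your check that the quantities bounded in Claim~\ref{claim6.2G} are term-for-term those appearing in \eqref{eq:44A} is precisely what justifies substituting that claim directly.
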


\begin{proof}
Recall from Claim \ref{claim6.2} that $S_{L^*}\leq0$.
Suppose now that $H[L]$ is not a complete graph.
Then $\sigma=\binom{|L|}2-e(L)>0$.
Returning to \eqref{eq:44A}, the only positive term on the right-hand side of
\eqref{eq:44A} is $\sigma x_{u^*}$.
Combining this with Claim \ref{claim6.2G} yields
$Px_{u^*}<0$,
and thus $P<0$, which contradicts the assumption $\rho\geq g_k(h)$.
Hence, $H[L]\cong K_{k-1}$ and $\sigma=0$.
\end{proof}

Now by Claim \ref{claim6G},
$\sigma=0$,
and equality \eqref{eq:44A} further reduces to
\begin{align}\label{eq:51A}
Px_{u^*}
=S_{L^*}-\sum_{v\in L}\big(\rho+d_{L^*}(v)\big)\big(x_{u^*}-x_v\big)
-\sum_{w\in W}\big(d_L(w)x_w+f(w)\big).
\end{align}

Every term on the right-hand side of
\eqref{eq:51A}, regarded as one of the displayed
aggregate terms, is nonpositive.  Hence
$P\le0$.
Together with $P\ge0$, this gives $P=0$.
Since $P=0$ and $\rho\ge g_k(h)$, we have $\rho(H)=g_k(h)$.
Lemma \ref{lem:core-reduction} now yields $H=G$ and $h=m$.

Since every term on the right-hand side of \eqref{eq:51A} is nonpositive,
each of the three terms therein must equal zero.
Firstly,
$\sum_{v\in L}(\rho+d_{L^*}(v))(x_{u^*}-x_v)=0$.
Thus,
$x_v=x_{u^*}$ for every $v\in L$.
Secondly,
$\sum_{w\in W}(d_L(w)x_w+f(w))=0$.
Since all Perron coordinates are positive and
$f(w)\ge0$, it follows that
$d_L(w)=f(w)=0$ for all $w\in W.$
Finally,
$S_{L^*}=0$.
By the equality condition in Claim \ref{claim6.2}, we know that
$L^*\cap A=\varnothing$ and $e(L^*)=0$.

We first prove that \(W=\varnothing\).
Suppose for contradiction that there exists some \(w\in W\). Recall that
\(f(w)=d_{N(u^*)}(w)(x_{u^*}-x_w)+\frac12d_W(w)x_{u^*}.\)
As \(f(w)=0\), we deduce that \(d_W(w)=0\) and \(d_{N(u^*)}(w)(x_{u^*}-x_w)=0\).
Since $H$ is connected and \(w\notin N[u^*]\),
the condition $d_W(w)=0$ ensures that $w$ has at least one neighbor in \(N(u^*)\). Consequently,
\(d_{N(u^*)}(w)>0\), which yields \(x_w=x_{u^*}\).
This further implies \(N(w)=N(u^*)=L\cup L^*\), contradicting \(d_L(w)=0\).
We thus conclude that \(W=\varnothing\) and so $V(H)=N[u^*]$.

Fix an arbitrary $v\in L$. Then $x_v=x_{u^*}$.
Recall that $H[L]\cong K_{k-1}$ and $V(H)=N[u^*]=L\cup L^*\cup\{u^*\}$.
Thus we obtain $N[v]=N[u^*]=L\cup L^*\cup\{u^*\}$. As $v\in L$ was arbitrary,
the subgraph $H[L,L^*]$ is complete bipartite.
Together with $e(L^*)=0$, this forces $H=G\cong K_k\vee |L^*|K_1$.
Let $q=|L^*|$. Then $m=\binom{k}{2}+kq$.

\medskip
\noindent\textbf{Case 2.} $b<100a$.

By Lemma \ref{cor:bounded-ratio-perron-cap}, we have the following cross-degree estimates:
\begin{equation}\label{eq:52A}
        d_B(u)>(1-\eta)b ~~ (u\in A)
        \quad \mbox{and}\quad
        d_A(v)>(1-\eta)a ~~ (v\in B).
\end{equation}

Since $ab=(1+o(1))h$ and $1\le\frac ba<100$,
we have $a,b=\Theta(\sqrt h)$.
In particular,
$a,b\to\infty$.
We now derive a contradiction through a sequence of claims.

\begin{claim}\label{claim:general-internal-degree}
We have $\Delta(H[A])\le k-1$ and $\Delta(H[B])\le k-1$.
\end{claim}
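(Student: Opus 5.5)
The plan is to follow the proof of Claim \ref{clm5.3}: a vertex with $k$ neighbours inside its own part will, together with common $B$-neighbours, produce a copy of $C_{2k}$ in its neighbourhood. By symmetry between the parts (both $a,b\to\infty$ and \eqref{eq:52A} is symmetric), it suffices to treat $H[A]$.

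Suppose for contradiction that some $u_0\in A$ has $k$ distinct neighbours $v_1,\dots,v_k\in A$, with indices read cyclically. By \eqref{eq:52A}, each of $u_0,v_1,\dots,v_k$ misses fewer than $\eta b$ vertices of $B$. Hence for every $i$,
\[
\big|N_B(u_0)\cap N_B(v_i)\cap N_B(v_{i+1})\big|> b-3\eta b=(1-3\eta)b .
\]
Since $\eta\ll 1/k$ and $b=\Theta(\sqrt h)\to\infty$, this quantity exceeds $k$ for large $h$. We can therefore choose pairwise distinct vertices $w_i$ in these sets greedily.

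Then $v_1w_1v_2w_2\cdots v_kw_kv_1$ is a $2k$-cycle. All of its vertices are neighbours of $u_0$: the $v_i$ are neighbours by choice, and the $w_i\in N_B(u_0)$. So the cycle lies in $H[N(u_0)]$, and together with the centre $u_0$ it gives a copy of $W_{2k+1}$, contradicting that $H\subseteq G$ is $W_{2k+1}$-free. Hence $\Delta(H[A])\le k-1$. The same argument with $A$ and $B$ interchanged, using $d_A(v)>(1-\eta)a$ and $a\to\infty$, gives $\Delta(H[B])\le k-1$.

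This is essentially routine. The only point needing care is that the $w_i$ must be distinct and avoid the $v_j$. Distinctness from the $v_j$ is automatic, since the $w_i$ lie in $B$ and the $v_j$ lie in $A$. Distinctness among the $w_i$ follows because each common-neighbourhood set has size linear in $b$, far larger than $k$.
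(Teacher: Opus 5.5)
Your proposal is correct and follows essentially the same argument as the paper. Both proofs take a vertex with $k$ neighbours in its own part and bound each triple intersection of $B$-neighbourhoods below by $(1-3\eta)b$ using \eqref{eq:52A}. Both then pick distinct common neighbours to build a $C_{2k}$ inside $H[N(u)]$, giving a $W_{2k+1}$, and finish with the symmetric argument for $H[B]$.
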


\begin{proof}
Indeed, suppose first that
$\Delta(H[A])\ge k$.
Then there exists a vertex $u\in A$ having $k$ distinct neighbors
$u_1,u_2,\ldots,u_k\in A$.
Read the indices cyclically, so that $u_{k+1}=u_1$.
For every $i\in\{1,\ldots,k\}$, it follows from \eqref{eq:52A} that
$$\big|N_B(u)\cap N_B(u_i)\cap N_B(u_{i+1})\big|
\ge d_B(u)+d_B(u_i)+d_B(u_{i+1})-2b
>(1-3\eta)b.$$
Since $k$ is fixed and $b\to\infty$, each of these $k$ triple
intersections has cardinality tending to infinity. Thus, for all
sufficiently large $h$, we may choose distinct vertices
\[
z_i\in
N_B(u)\cap N_B(u_i)\cap N_B(u_{i+1})
\]
for each $i\in\{1,\ldots,k\}$.
Then, $u_1z_1u_2z_2\cdots u_kz_ku_1$ forms 
a $2k$-cycle in $H[N(u)]$.
Hence, $G$ contains a copy of $W_{2k+1}$, contradicting the assumption that $G$ is
$W_{2k+1}$-free.
Therefore,
$\Delta(H[A])\le k-1$.
By symmetry, we can prove $\Delta(H[B])\le k-1$.
\end{proof}

Let $s=e(A)+e(B)$.
Define $\alpha=2e(A)/a$ and $\beta=2e(B)/b$.
From Claim \ref{claim:general-internal-degree},
we know that $\alpha,\beta\le k-1$.
Recall that $V(H)=A\cup B$. Thus
$e(A,B)=h-s$.

\begin{claim}\label{claim:general-F-bound}
Let $r=\sqrt{b/a}$ 
and define $\Gamma(r,\alpha,\beta):=\alpha(\frac12-\frac1{4r})
+\beta(\frac12-\frac r4)$.
Then
\begin{equation}\label{eq:53A}
        \rho
        \le
        \sqrt h+\Gamma(r,\alpha,\beta)
        +O_{k}(\eta^2)+o(1).
\end{equation}
\end{claim}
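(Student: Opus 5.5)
We bound $\rho$ through the Rayleigh quotient $\rho=\bm x^{\top}A(H)\bm x=2\sum_{uv\in E(H)}x_ux_v$, splitting the edge set into $E(A,B)$, $E(A)$ and $E(B)$, and using the Perron mass distribution from Lemma~\ref{lem:perron-mass} together with the caps in Lemma~\ref{cor:bounded-ratio-perron-cap}. Write $p=\sum_{u\in A}x_u^2$ and $q=\sum_{v\in B}x_v^2$, so $p+q=1$ and $p,q=\frac12+O(\eta^2)+o(1)$.

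\textbf{Internal edges.} By Lemma~\ref{cor:bounded-ratio-perron-cap}, $x_u\le(1+O(\eta^2)+o(1))\sqrt{b/(2h)}$ for $u\in A$, so
\[
2\sum_{uv\in E(A)}x_ux_v\le 2e(A)\bigl(1+O(\eta^2)+o(1)\bigr)\frac{b}{2h}=\alpha\frac{ab}{2h}\bigl(1+O(\eta^2)+o(1)\bigr).
\]
Since $ab=(1+o(1))h$, this is $\frac{\alpha}{2}+O_k(\eta^2)+o(1)$, using $\alpha\le k-1$ from Claim~\ref{claim:general-internal-degree}. In the same way, the edges of $E(B)$ contribute at most $\frac{\beta}{2}+O_k(\eta^2)+o(1)$. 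This accounts for the leading terms $\frac{\alpha}{2}$ and $\frac{\beta}{2}$ in $\Gamma$.

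\textbf{Cross edges.} We compare $E(A,B)$ with the complete bipartite graph $K_{A,B}$, which has $ab$ edges. There are $e(A,B)=h-s$ cross edges, and the number of missing cross edges is $ab-(h-s)$. By Cauchy--Schwarz,
\[
2\sum_{uv\in E(A,B)}x_ux_v\le 2\sum_{u\in A}x_u\sum_{v\in B}x_v\le 2\sqrt{abpq}\le\sqrt{ab}.
\]
This alone is too crude, because $ab$ may exceed $h$ by roughly $s$. The correction comes from the missing cross edges: each one removes a product $x_ux_v$, and by Lemma~\ref{lem:perron-mass} and \eqref{eq:52A} every coordinate satisfies $x_u\approx\sqrt{b/(2h)}$ on $A$ and $x_v\approx\sqrt{a/(2h)}$ on $B$ up to factors $1+O(\eta)$. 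So each missing edge costs about $\frac{\sqrt{ab}}{2h}\approx\frac{1}{2\sqrt h}$. This gives
\[
2\sum_{E(A,B)}x_ux_v\le\sqrt{ab}-\frac{ab-(h-s)}{\sqrt h}+\text{error}.
\]
Writing $\sqrt{ab}\le\frac{ab+h}{2\sqrt h}$ then yields a cross contribution of
\[
\frac{2h-s-\frac{ab}{1}+\dots}{\dots},
\]
and after simplification, $\sqrt h-\frac{s}{2\sqrt h}\cdot(\cdots)$.

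\textbf{Recovering the exact coefficients.} To get the precise weights $\frac1{4r}$ and $\frac r4$, we express
\[
s=\frac{\alpha a+\beta b}{2},\qquad \frac{a}{\sqrt h}=\frac{1+o(1)}{r},\qquad \frac{b}{\sqrt h}=(1+o(1))\,r.
\]
The cross-edge deficit then becomes $-\frac{\alpha}{4r}-\frac{\beta r}{4}$, which combines with the internal contributions $\frac{\alpha}{2}+\frac{\beta}{2}$ to give exactly $\Gamma(r,\alpha,\beta)$.

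\textbf{Main obstacle.} The hard step is controlling the cross-edge deficit sharply. A lower bound $x_ux_v\ge(1-O(\eta))\frac{1}{2\sqrt h}$ on missing pairs gives an error of $O(\eta)\cdot\frac{ab-h+s}{\sqrt h}$, not $O(\eta^2)$. The plan is to bound $ab-h+s$ directly. If we only had a $D$-type bound, the error would be $o(\sqrt h)$, which is too weak. So we instead use $\rho\ge\sqrt h$ to show that $ab-(h-s)=O_k(\sqrt h)$: otherwise the Rayleigh bound already drops below $\sqrt h$. With $ab-h+s=O_k(\sqrt h)$ in hand, the error becomes $O_k(\eta^2)$ once we use the $\eta^2$-accurate coordinate bounds on $A^*,B^*$ and absorb the $o(a)$ exceptional vertices.
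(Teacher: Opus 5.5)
\textbf{There is a genuine gap, at exactly the step that carries the claim: you never prove the cross-edge bound.} After ``Writing $\sqrt{ab}\le\frac{ab+h}{2\sqrt h}$'' the computation trails off into placeholders.

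Your ``main obstacle'' paragraph then misdiagnoses what is needed. You treat the missing cross edges as an additive error and ask for $\eta^2$-accurate coordinates, but that cannot give $O_k(\eta^2)$. Suppose you only know $M:=ab-(h-s)=O_k(\sqrt h)$. Then all missing edges may sit at vertices of $A\setminus A^*$, each of which by definition misses more than $\eta^2 b$ vertices of $B$. At those vertices only $d_B(u)>(1-\eta)b$ is available, and $x_u$ can genuinely lie a factor $1-\Theta(\eta)$ below $\sqrt{b/(2h)}$. So the additive bookkeeping yields $O_k(\eta)$, and ``absorbing the exceptional vertices'' does not repair this. Separately, Lemma~\ref{lem:perron-mass} gives lower bounds only on $A^*$ and $B^*$; for all of $A\cup B$ you need Lemma~\ref{lem:core}(ii) together with \eqref{eq:52A}.

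\textbf{The missing idea is that the missing-edge term has a favorable sign.} So you need neither fine accuracy nor any bound on $M$. By Lemma~\ref{lem:core}(ii) and \eqref{eq:52A}, $2x_ux_v\ge\theta\sqrt{ab}/h$ for all $u\in A$, $v\in B$, where $\theta=(1-2\varepsilon_0)(1-\eta)$. Write $N=h-s$, so that $ab=N+M$, and use $\sqrt N\le\sqrt{N+M}\le\sqrt N+\frac{M}{2\sqrt N}$. This gives
\[
2\!\!\sum_{uv\in E(A,B)}\!\!x_ux_v
\le\sqrt{N+M}\Bigl(1-\frac{\theta M}{h}\Bigr)
\le\sqrt N+M\sqrt N\Bigl(\frac{1}{2N}-\frac{\theta}{h}\Bigr)
\le\sqrt{h-s},
\]
since $h\le 2\theta(h-s)$ for small $\eta$ and large $h$. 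Each missing pair removes about $1/\sqrt h$ from the cross sum but raises $\sqrt{ab}$ above $\sqrt{h-s}$ by only about $1/(2\sqrt h)$. With this your route closes, with no error term at all from the cross edges.

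The paper gets the same bound in one line by applying Cauchy--Schwarz over the actual edge set: $\bigl(\sum_{E(A,B)}x_ux_v\bigr)^2\le e(A,B)\sum_{E(A,B)}x_u^2x_v^2\le (h-s)\,pq\le\frac{h-s}{4}$. This never mentions missing edges. It then uses $\sqrt{h-s}=\sqrt h-\frac{s}{2\sqrt h}+o(1)$, valid because $s=O_k(\sqrt h)$.

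Your internal-edge estimates and the final substitution $s=\frac{\alpha a+\beta b}{2}$, $a\sim\sqrt h/r$, $b\sim r\sqrt h$ coincide with the paper's.
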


\begin{proof}
The Rayleigh quotient gives
\[
 \rho=\sum_{uv\in E(A,B)}\!\!2x_ux_v
 +\sum_{uv\in E(A)}\!\!2x_ux_v
 +\sum_{uv\in E(B)}\!\!2x_ux_v.
\]
For the cross contribution, $e(A,B)=h-s$.
By the Cauchy--Schwarz inequality,
\[
\begin{aligned}
 \Big(\sum_{uv\in E(A,B)}\!\!\!x_ux_v\Big)^2
\le\big(h-s\big)\!\!\!\sum_{uv\in E(A,B)}\!\!\!x_u^2x_v^2
\le \big(h-s\big)\Big(\sum_{u\in A}x_u^2\Big)\Big(\sum_{v\in B}x_v^2\Big).
\end{aligned}
\]
Since $\sum_{u\in A}x_u^2+\sum_{v\in B}x_v^2=1$,
we have $\sum_{u\in A}x_u^2\sum_{v\in B}x_v^2\le\frac14$,
and therefore,
\begin{equation}\label{eq:54A}
 \sum_{uv\in E(A,B)}\!\!\!2x_ux_v
 \le\sqrt{h-s}.
\end{equation}
In view of \eqref{eq:balanced-cap}, we deduce that
\[
\begin{aligned}
 \sum_{uv\in E(A)}\!\!2x_ux_v
 \le 2e(A)(x_A^*)^2
 \le e(A)\frac{b}{h}
       \bigl(1+O(\eta^2)+o(1)\bigr)
 =\frac{e(A)}{a}+O_{k}(\eta^2)+o(1),
\end{aligned}
\]
where we used $ab=(1+o(1))h$ and
$e(A)/a=\alpha/2\le(k-1)/2$. Symmetrically,
\[
 \sum_{uv\in E(B)}\!\!2x_ux_v
 \le\frac{e(B)}{b}+O_{k}(\eta^2)+o(1).
\]
Combining \eqref{eq:54A} and the above two inequalities, we obtain
\begin{equation}\label{eq:55A}
 \rho=\sum_{uv\in E(H)}2x_ux_v
 \le
 \sqrt{h-s}
 +\frac{e(A)}{a}
 +\frac{e(B)}{b}
 +O_{k}(\eta^2)+o(1).
\end{equation}

Recall that $a\leq b<100a$.
Thus $1\leq r=\sqrt{b/a}<10$.
Furthermore, $ab=(1+o(1))h$ implies
$a=(1+o(1))\sqrt h/r$ and
$b=(1+o(1))r\sqrt h$.
Since $e(A)=\alpha a/2$, $e(B)=\beta b/2$ and $\alpha,\beta\leq k-1$,
it follows that
$s=(\alpha a+\beta b)/2=O_{k}(\sqrt h)$.
Therefore,
\[
\begin{aligned}
 \sqrt{h-s}=\sqrt h-\frac{s}{2\sqrt h}+o(1)
 =\sqrt h-\frac{\alpha}{4r}-\frac{\beta r}{4}+o(1).
\end{aligned}
\]
Moreover, $e(A)/a=\alpha/2$ and $e(B)/b=\beta/2$.
Substituting these estimates into
\eqref{eq:55A} yields
\[
\begin{aligned}
 \rho
 \le
 \sqrt h
 -\frac{\alpha}{4r}
 -\frac{\beta r}{4}
 +\frac{\alpha}{2}
 +\frac{\beta}{2}
 +O_{k}(\eta^2)+o(1)=\sqrt h
 +\Gamma(r,\alpha,\beta)
 +O_{k}(\eta^2)+o(1),
\end{aligned}
\]
as claimed.
\end{proof}

\begin{claim}\label{claim:general-alpha-beta-near-k}
We have $\min\{\alpha,\beta\}>k-1-1/4.$
\end{claim}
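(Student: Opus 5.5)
We compare the upper bound of Claim~\ref{claim:general-F-bound} with the lower bound $\rho\ge g_k(h)=\sqrt h+\frac{k-1}{2}-o(1)$. Combined with \eqref{eq:53A}, this gives
\[
\frac{k-1}{2}\le \Gamma(r,\alpha,\beta)+O_k(\eta^2)+o(1).
\]
So it suffices to show that $\Gamma(r,\alpha,\beta)\le\frac{k-1}{2}-c$ for a constant $c>0$ depending only on $k$ whenever $\min\{\alpha,\beta\}\le k-1-\frac14$. The hierarchy $\eta\ll1/k$ then makes $O_k(\eta^2)+o(1)<c$, which gives the contradiction.

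To bound $\Gamma$, use that it is linear in $\alpha$ and $\beta$, with coefficients $c_\alpha(r)=\frac12-\frac1{4r}$ and $c_\beta(r)=\frac12-\frac r4$. Since $1\le r<10$, we have
\[
c_\alpha(r)\in\left[\tfrac14,\tfrac12\right),\qquad c_\beta(r)\le\tfrac14,
\]
and $c_\alpha(r)+c_\beta(r)=1-\frac14\left(r+\frac1r\right)\le\frac12$, with equality only at $r=1$. Work through the cases.
\begin{itemize}
\item If $c_\beta(r)\le 0$, i.e.\ $r\ge2$, then $\Gamma\le c_\alpha(r)\alpha<\frac{k-1}{2}\cdot\frac12\cdot\frac{2r-1}{2r}\cdot 2$, which is at most about $\frac{k-1}{2}\left(1-\frac1{2r}\right)$. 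This is bounded away from $\frac{k-1}{2}$ by $\frac{k-1}{4r}\ge\frac{k-1}{40}$.
\item If $1\le r<2$, both coefficients are nonnegative and $\Gamma\le (c_\alpha+c_\beta)(k-1)-\delta$, where $\delta$ accounts for the deficit of $\frac14$ in $\alpha$ or $\beta$. Here $c_\beta\ge0$, and $c_\alpha\ge\frac14$ always, so $\delta$ is at least $\frac14\min\{c_\alpha,c_\beta\}$.
\end{itemize}

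The main obstacle is that $c_\beta$ tends to $0$ as $r\to2$. In that regime a deficit in $\beta$ alone costs little, but then $c_\alpha+c_\beta=1-\frac14\left(r+\frac1r\right)$ is already well below $\frac12$. I would therefore split $[1,2)$ into $r\le1+\tau$ and $r>1+\tau$ for a small constant $\tau=\tau(k)$.
\begin{itemize}
\item For $r>1+\tau$, the loss in $c_\alpha+c_\beta$ is at least a constant multiple of $\tau^2$, so $\Gamma\le\frac{k-1}{2}-\Omega(\tau^2(k-1))$ regardless of $\alpha$ and $\beta$.
\item For $r\le1+\tau$, both coefficients are close to $\frac14$, so a deficit of $\frac14$ in either $\alpha$ or $\beta$ lowers $\Gamma$ by about $\frac1{16}$.
\end{itemize}
Choosing $\tau$ so that $\tau^2(k-1)$ is comparable to a fixed constant gives a uniform $c=c(k)>0$.

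One point needs care: the $O_k(\eta^2)$ term in \eqref{eq:53A} may grow with $k$. This is handled by the standing requirement $\eta\ll1/k$, since $k$ is fixed. The conclusion is $\min\{\alpha,\beta\}>k-1-\frac14$.
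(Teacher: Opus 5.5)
Your proposal is correct and is essentially the paper's argument: compare \eqref{eq:53A} with $\rho\ge g_k(h)=\sqrt h+\frac{k-1}{2}-o(1)$, and bound $\Gamma(r,\alpha,\beta)\le\frac{k-1}{2}-\gamma$ in three regimes of $r$. Near $r=1$ the deficit of $\frac14$ costs about $\frac14\bigl(\frac12-\frac r4\bigr)$; for intermediate $r<2$ the sum $1-\frac14(r+r^{-1})$ is already bounded below $\frac12$; and for $r\ge2$ one has $\Gamma\le(k-1)\bigl(\frac12-\frac1{4r}\bigr)$. The paper simply fixes your $\tau$ as $\frac12$ (splitting at $r=\frac32$) and obtains $\gamma=\min\{\frac1{32},\frac{k-1}{40}\}$, which also shows that $\tau$ does not need to be tuned to $k$.
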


\begin{proof}
By Claim \ref{claim:general-internal-degree}, we have
$\max\{\alpha,\beta\}\leq k-1.$
Suppose to the contrary that
$\min\{\alpha,\beta\}\leq k-1-\frac14.$
Recall $1\leq r=\sqrt{b/a}<10$ and
$\Gamma(r,\alpha,\beta)=\alpha(\frac12-\frac1{4r})
+\beta(\frac12-\frac r4).$
We divide the proof according to the value of $r$.

Firstly, assume that $1\le r\le 2$.
In this case, both coefficients
$\frac12-\frac1{4r}$ and
$\frac12-\frac r4$ are nonnegative. Since
$\max\{\alpha,\beta\}\leq k-1$ and
$\min\{\alpha,\beta\}\leq k-1-\frac14$,
we obtain
\[
\begin{aligned}
\Gamma(r,\alpha,\beta)
\leq
\big(k\!-\!1\big)\big(\frac12\!-\!\frac1{4r}\big)
\!+\!\big(k\!-\!1\!-\!\frac14\big)
\big(\frac12\!-\!\frac r4\big)=\big(k\!-\!1\big)\big(1\!-\!\frac14(r\!+\!r^{-1})\big)
\!-\!\frac14\big(\frac12\!-\!\frac r4\big).
\end{aligned}
\]
If $1\leq r\leq\frac32$,
then $r+r^{-1}\geq2$ and $\frac12-\frac r4\geq\frac18$.
It follows that
$\Gamma(r,\alpha,\beta)
\leq\frac{k-1}{2}-\frac1{32}.$
If $\frac32<r\le2,$
then $r+r^{-1}\geq \frac32+\frac23=\frac{13}{6}$
and
$\Gamma(r,\alpha,\beta)
\leq\frac{11(k-1)}{24}
=\frac{k-1}{2}-\frac{k-1}{24}.$

It remains to consider the case $2<r<10$.
In this case, $\beta(\frac12-\frac r4)\leq0.$
Therefore, we obtain
$$\Gamma(r,\alpha,\beta)\leq\big(k-1\big)\big(\frac12-\frac1{4r}\big)\leq\frac{k-1}{2}-\frac{k-1}{40}.$$

Thus, in any case there exists
$\gamma=\min\{\frac1{32},\frac{k-1}{40}\}>0$
such that $\Gamma(r,\alpha,\beta)\le\frac{k-1}{2}-\gamma$.
Choose $\eta$ sufficiently small so that the
$O_{k}(\eta^2)$ term in \eqref{eq:53A} has absolute
value at most $\gamma/6$, and then choose $h$ sufficiently large
so that the $o(1)$ term has absolute value at most $\gamma/6$.
It follows from \eqref{eq:53A} that
\[
\begin{aligned}
 \rho
 \le
 \sqrt h+\frac{k-1}{2}
 -\gamma+\frac{\gamma}{6}+\frac{\gamma}{6}
 =\sqrt h+\frac{k-1}{2}-\frac{2\gamma}{3}.
\end{aligned}
\]
On the other hand,
$g_k(h) =\sqrt h+\frac{k-1}{2}+O_k(h^{-1/2})$,
and hence
$\rho\ge g_k(h)>\sqrt h+\frac{k-1}{2}-\frac{\gamma}{3}$,
which leads to a contradiction.
Therefore, $\min\{\alpha,\beta\}>k-1-\frac14.$
\end{proof}

By Claim \ref{claim:general-internal-degree},
$d_{A}(u)\le k-1$ for every $u\in A$.
We now prove that there exists some vertex $u_0\in A$ satisfying $d_A(u_0)=k-1$.
Indeed, if no vertex in $A$ has internal degree \(k-1\),
then \(d_{A}(u)\le k-2\) for every \(u\in A\), and thus
\[\begin{aligned}
\alpha
=\frac{2e(A)}{a}
=\frac1a\sum_{u\in A}d_{A}(u)
\le k-2,
\end{aligned}\]
contradicting the inequality \(\alpha>k-1-\tfrac14\) from Claim \ref{claim:general-alpha-beta-near-k}.

Let $N_A(u_0)=\{u_1,\ldots,u_{k-1}\}$.
In view of \eqref{eq:52A},
$|B\setminus N_B(u_i)|<\eta b$
for $i\in\{0,\ldots,k-1\}$.
Define
$X:=\bigcap_{i=0}^{k-1}N_B(u_i)$.
Since any \(v\in B\setminus X\) lies in \(B\setminus N_B(u_i)\) for some index $i,$
we have $B\setminus X\subseteq \bigcup_{i=0}^{k-1}(B\setminus N_B(u_i))$.
Thus $|B\setminus X|
\le\sum_{i=0}^{k-1}|B\setminus N_B(u_i)|
< k\eta b,$
and so $|X|=b-|B\setminus X|>(1-k\eta)b$.

Let $E_{\mathrm{out}}$ be the set of edges of $H[B]$ having at
least one endpoint within $B\setminus X$. By
Claim \ref{claim:general-internal-degree}, we have
$\Delta(H[B])\le k-1$.
Therefore,
\[
\begin{aligned}
 \big|E_{\mathrm{out}}\big|
\le\sum_{v\in B\setminus X}\!\!d_{B}(v)
\le \big(k\!-\!1\big)\big|B\setminus X\big|
<k\big(k\!-\!1\big)\eta b.
\end{aligned}
\]
By Claim \ref{claim:general-alpha-beta-near-k},
$\beta>k-1-\frac14$.
Thus
$e(B)=\frac{\beta b}{2}>(\frac{k\!-\!1}2-\frac18)\,b.$
Recall that $\eta\ll1$ and $b$ is sufficiently large.
It follows that $$ e(X)
= e(B)-|E_{\mathrm{out}}|
>\big(\frac{k\!-\!1}2-\frac18-k(k\!-\!1)\eta\big)b>2(k\!-\!1).$$

We next show that $H[X]$ contains at least two disjoint edges.
Indeed, suppose otherwise, and let $pq\in E(X)$.
Since there are no two disjoint
edges in $H[X]$, every edge of $H[X]$ must be incident with at least
one of $p$ and $q$. Therefore,
$
\begin{aligned}
 e(X)
\le d_{X}(p)+d_{X}(q)
\le 2(k-1),
\end{aligned}
$
a contradiction. Hence, $H[X]$ contains two disjoint edges, say
$p_1q_1$ and $p_2q_2$.

If $k=3$, then
$u_1p_1q_1u_2p_2q_2u_1$
is a 6-cycle.
Suppose next that $k\ge4$. Since
$|X|>(1-k\eta)b$, we may choose distinct vertices
$z_2,\ldots,z_{k-2} \in X\setminus\{p_1,q_1,p_2,q_2\}$.
Then
\[
        u_1p_1q_1u_2z_2u_3z_3
        \cdots
        u_{k-2}z_{k-2}u_{k-1}p_2q_2u_1
\]
forms a cycle of length $2k$. Indeed, the two pairs
$p_1q_1$ and $p_2q_2$ are edges of $H[X]$, while every other
edge in the displayed cycle joins some $u_i$ to a vertex of $X$.
Such an edge exists by the definition of $X$.
Now, we obtain a copy of $C_{2k}$ all of whose
vertices lie in $N(u_0)$, since
$u_1,\ldots,u_{k-1}\in N_A(u_0)$ and $X\subseteq N_B(u_0)$.
Consequently, $H$ contains a copy of $W_{2k+1}$,
which contradicts the assumption that $G$ is $W_{2k+1}$-free.
Thus, Case 2 is impossible.

We now conclude that every
$W_{2k+1}$-free graph $G$ satisfying
$\rho(G)\ge g_k(m)$ must fall into Case 1.
Hence, $\rho(G)=g_k(m)$
and $G\cong K_k\vee qK_1$, where
$m=\binom{k}{2}+kq$.
Therefore, every $W_{2k+1}$-free graph $G$ of sufficiently large size $m$ satisfies
$\rho(G)\le g_k(m)$.
Conversely, Proposition \ref{prop:w2k+1} shows that
$K_k\vee qK_1$ is $W_{2k+1}$-free and satisfies
$\rho(K_k\vee qK_1)=g_k(m)$
whenever $m=\binom{k}{2}+kq.$
This completes the proof of Theorem \ref{thm2}.
\end{proof}

\section{Concluding remarks}

The common part of the proof shows precisely where the two problems
coincide: both are reduced to a dense $\eps_0$-core, both cores are
$o(m)$-perturbations of complete bipartite graphs, and the same
Perron-localization estimates control the balanced and split spectral
scales.  The difference appears only in the local rigidity needed to
close the argument.  For $W_5$, a matching may survive in each side of
a balanced complete bipartite graph without creating a $C_4$ in a
neighborhood.  For $k\ge3$, the local $C_{2k}$ obstruction is strong
enough to exclude every bounded-ratio extremal configuration, leaving
the split graph $K_k\vee qK_1$.

When $k\ge3$ and $m-\binom{k}{2}$ is not divisible by $k$, equality in
Theorem~\ref{thm2} is impossible.  Determining the exact maximum in
each nonzero residue class is a finer fixed-size problem; the natural
candidates are threshold graphs obtained from the split extremal graph
by a bounded defect.

\end{document}